\title[Bounded acyclicity and relative simplicial volume]%
      {Bounded acyclicity\\ and relative simplicial volume}
\author{Kevin Li}
\address{School of Mathematical Sciences, University of Southampton, Southampton SO17 1BJ, United Kingdom}
\email{kevin.li@soton.ac.uk}
\author{Clara L\"oh}
\address{Fakult\"at f\"ur Mathematik, Universit\"at Regensburg, 93040 Regensburg, Germany}
\email{clara.loeh@ur.de}
\author{Marco Moraschini}
\address{Dipartimento di Matematica, Universit{\`a} di Bologna, 40126 Bologna, Italy}
\email{marco.moraschini2@unibo.it}
\date{\today.\ \copyright{\ K.~Li, C.~L\"oh, M.~Moraschini 2022}.
  This work was partially supported by the CRC~1085 \emph{Higher Invariants}
  (Universit\"at Regensburg, funded by the DFG)}
\keywords{Simplicial volume, bounded cohomology, bounded acyclicity, classifying spaces for families of subgroups}
\subjclass[2020]{55N35, 20J06, 57N65}
\theoremstyle{definition}
\newtheorem{defn}{Definition}[section]
\newtheorem{ex}[defn]{Example}
\newtheorem{question}[defn]{Question}
\newtheorem{setup}[defn]{Setup}
\newtheorem*{ack}{Acknowledgements}
\theoremstyle{plain}
\newtheorem{thm}[defn]{Theorem}
\newtheorem{lem}[defn]{Lemma}
\newtheorem{prop}[defn]{Proposition}
\newtheorem{cor}[defn]{Corollary}
\theoremstyle{remark}
\newtheorem{rem}[defn]{Remark}
\numberwithin{equation}{section}
\newcommand{\IN}{\ensuremath\mathbb{N}}
\newcommand{\IZ}{\ensuremath\mathbb{Z}}
\newcommand{\IQ}{\ensuremath\mathbb{Q}}
\newcommand{\IR}{\ensuremath\mathbb{R}}
\let\N\IN
\let\Z\IZ
\let\Q\IQ
\let\R\IR
\newcommand{\calC}{\ensuremath\mathcal{C}}
\newcommand{\calU}{\ensuremath\mathcal{U}}
\newcommand{\calV}{\ensuremath\mathcal{V}}
\newcommand{\calH}{\ensuremath\mathcal{H}}
\newcommand{\calN}{\ensuremath\mathcal{N}}
\newcommand{\calG}{\ensuremath\mathcal{G}}
\newcommand{\calA}{\ensuremath\mathcal{A}}
\newcommand{\F}{\ensuremath\mathcal{F}}
\newcommand{\TR}{\ensuremath\mathsf{Tr}}
\newcommand{\FIN}{\ensuremath\mathsf{Fin}}
\newcommand{\AME}{\ensuremath\mathsf{Am}}
\newcommand{\EFG}{\ensuremath E_{\F}G}
\newcommand{\EFGH}{\ensuremath E_{\F}(G,\calH)}
\newcommand{\enum}{\rm{(\roman*)}}
\newcommand{\spann}[1]{{\ensuremath \langle{#1}\rangle}}
\newcommand{\into}{\ensuremath\hookrightarrow}
\DeclareMathOperator{\Hom}{Hom}
\DeclareMathOperator{\im}{im}
\DeclareMathOperator{\Stab}{Stab}
\DeclareMathOperator{\cd}{cd}
\DeclareMathOperator{\cat}{cat}
\DeclareMathOperator{\mult}{mult}
\DeclareMathOperator{\id}{id}
\DeclareMathOperator{\Ext}{Ext}
\DeclareMathOperator{\UBC}{UBC}
\DeclareMathOperator{\UUBC}{UUBC}
\DeclareMathOperator{\res}{res}
\DeclareMathOperator{\map}{map}
\DeclareMathOperator{\Group}{Group}
\DeclareMathOperator{\lf}{lf}
\DeclareMathOperator{\comp}{comp}
\DeclareMathOperator{\alt}{alt}
\newcommand{\Balt}{\ensuremath\ell^\infty_{\alt}}
\DeclareMathOperator{\ev}{ev}
\DeclareMathOperator{\sumop}{sum}
\newcommand{\linf}{\ensuremath\ell^\infty}
\newcommand{\graf}{\ensuremath\Gamma}
\newcommand{\gog}{\ensuremath\mathbb{G}}
\def\args{\,\cdot\,}
\def\qand{\quad\text{and}\quad}
\def\fa#1{%
  \forall_{#1}\;\;\;}
\def\exi#1{%
  \exists_{#1}\;\;\;}
\def\bprod#1{%
  \begingroup%
   \if@display%
      \def\@tempa{\sideset{}{^b}\prod_{#1}}%
   \else%
      \def\@tempa{\prod_{#1}^b}%
   \fi%
   \@tempa\endgroup}
\def\sv#1{%
  \|#1\|}
\def\actson{%
  \curvearrowright}
\DeclareMathOperator{\lfop}{lf}
\def\svlf#1{%
  \|#1\|_{\lfop}}
\def\svlfR#1{%
  \|#1\|_{R,\lfop}}
\def\l@subsection{\@tocline{2}{0pt}{2.5pc}{5pc}{}}
\def\admissible{weak\-ly con\-vex}
\def\admissibility{weak con\-vex\-i\-ty}
\DeclareMathOperator{\relcat}{mult}
\def\category{multiplicity}
\begin{document}

\begin{abstract}
  We provide new vanishing and glueing results for relative simplicial
  volume, following up on two current themes in bounded
  cohomology: The passage from amenable groups to boundedly acyclic
  groups and the use of equivariant topology.
  
  More precisely, we consider equivariant nerve pairs and relative
  classifying spaces for families of subgroups. Typically, we apply
  this to uniformly boundedly acyclic families of subgroups.  Our
  methods also lead to vanishing results for $\ell^2$-Betti numbers of
  aspherical CW-pairs with small relative amenable category and to a 
  relative version of a result by Dranishnikov and Rudyak
  concerning mapping degrees and the inheritance of freeness of
  fundamental groups.
\end{abstract}

\maketitle

%%%%%%%%%%%%%%%%%%%%%%%%%%%%%%%%%%%%%%%%%%%%%%%%%%%%%%%%%%%%%%%%%%%%
\section{Introduction}

Bounded cohomology is defined as the cohomology of the bounded dual of
the singular or bar chain complex~\cite{vbc} and it has many
applications in group theory and geometry of manifolds. A fundamental
phenomenon is that bounded cohomology of amenable groups is trivial
(i.e., amenable groups are boundedly acyclic). On the other hand, the bounded
cohomology of negatively curved groups surjects onto ordinary
cohomology. In manifold topology, the simplicial volume of an oriented
compact manifold is a homotopy invariant defined as 
the $\ell^1$-seminorm of the $\R$-fundamental class~\cite{vbc}.
Using a duality argument, the simplicial volume can be expressed in terms of bounded cohomology.

We provide new vanishing results for relative simplicial volume,  
following up on two current themes in bounded cohomology:
\begin{itemize}
\item
  The passage from amenable groups to boundedly acyclic groups;
\item
  The use of equivariant topology, most notably of classifying
  spaces for families of subgroups.
\end{itemize}

A technical difficulty in the passage from amenable to boundedly
acyclic groups is that the class of amenable groups possesses
a large degree of uniformity when it comes to bounded cohomology.
This includes the fact that the class of amenable groups is closed
under subgroups and quotients and the fact that amenable groups
are not only boundedly acyclic, but uniformly boundedly acyclic.
Therefore, in the setting of boundedly acyclic groups, generalised
vanishing results for simplicial volume come with additional
uniformity and closure hypotheses.

As we aim at results for relative bounded cohomology and
relative simplicial volume, we adapt tools from equivariant
topology to this relative setting.

%%%%%%%%%%%%%%%%%
\subsection{Uniform bounded acyclicity}

Group actions with amenable stabilisers have proved to be a valuable
tool to compute bounded cohomology~\cite{Monod, burgermonodGAFA,
  BIuseful}. Similarly, also \emph{uniformly boundedly acyclic
actions} allow us to compute bounded cohomology, where the uniformity
refers to a uniform bound on the norms of primitives. Recently,
uniformly boundedly acyclic actions have been used to compute the
bounded cohomology of geometrically relevant
groups~\cite{fffclmm2,monodnariman}.

Let~$X$ be a path-connected space.
We say that a set of path-connected subspaces~$\calA$ of~$X$ is 
\emph{uniformly boundedly acyclic} [\emph{of order~$n$}] \emph{in~$X$} if 
the collection of all finite [resp.~$n$-fold] intersections of conjugates of 
the subgroups $$\bigl(\im(\pi_1(A\into X))\bigr)_{A\in \calA}$$ in~$\pi_1(X)$ is 
uniformly boundedly acyclic (Definition~\ref{def:ubacgroup}).
In the special case when the above groups are amenable, 
we also speak of an \emph{amenable} set of subspaces.
The issue of basepoints is addressed in Section~\ref{subsec:convs}. 
We have two geometric situations in which 
uniformly boundedly acyclic sets of subspaces lead to interesting
uniformly boundedly acyclic actions: Open covers and 
glueing loci of manifolds obtained by glueing manifolds with boundary.

%%%%%%%%%%%%%%%%%%
\subsection{Vanishing via relative open covers}
\label{subsec:intro:vanishing}

Gromov~\cite{vbc} and Ivanov~\cite{ivanov}
established vanishing results for the comparison map (and thus for
simplicial volume) in the presence of amenable open covers with small
multiplicity.

Following the approach by L\"oh and Sauer~\cite{Loeh-Sauer19} through
equivariant nerves and classifying spaces for families,
we generalise these vanishing results in two directions.
First, we allow more general covers:
A cover~$\calU$ of~$X$ by path-connected open subsets
is \emph{uniformly boundedly acyclic} if
the underlying set of subsets of~$X$ is uniformly boundedly acyclic in~$X$.
Second, we adapt the setting to pairs of CW-complexes~$(X,A)$,
where~$A$ is $\pi_1$-injective in~$X$ (Theorem~\ref{thm:relative vanishing theorem}).
To this end, we introduce the notion of [\emph{weakly convex}] \emph{relative covers}
(Definition~\ref{defn:relative cover}).
Using equivariant nerve pairs and classifying spaces of group pairs for families, 
we obtain:

\begin{thm}[Corollary~\ref{cor:vanishing UBAc}]
\label{thm:intro:vanishing UBAc}
	Let~$(X,A)$ be a CW-pair with path-connected ambient space~$X$.
	Assume that~$A$ has only finitely many connected components,
	each of which is $\pi_1$-injective in~$X$.
	Let~$\calU$ be a relative cover of~$(X,A)$ that is uniformly boundedly acyclic.
	\begin{enumerate}[label=\enum]
		\item If~$\calU$ is weakly convex, then the comparison map
		\[
			\comp^k\colon H^k_b(X,A;\IR)\to H^k(X,A;\IR)
		\]
		vanishes in all degrees~$k\ge \mult_A(\calU)$.
		\item Let~$\nu\colon (X,A)\to (|N(\calU)|,|N_A(\calU)|)$ be a nerve map.
		If~$\calU$ is convex, then the comparison map~$\comp^*$ factors
		through~$\nu$:
		\[\begin{tikzcd}
			H^*_b(X,A;\IR)\ar{r}{\comp^*}\ar[dashed]{dr} & H^*(X,A;\IR) \\
			& H^*(|N(\calU)|,|N_A(\calU)|;\IR)\ar{u}[swap]{H^*(\nu;\IR)} \,.
		\end{tikzcd}\]
	\end{enumerate}
\end{thm}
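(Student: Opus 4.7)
The plan is to deduce both parts as corollaries of the relative vanishing theorem already stated, Theorem~\ref{thm:relative vanishing theorem}, by unpacking the uniform bounded acyclicity hypothesis into a statement about the family of subgroups of $G = \pi_1(X)$ (and of the $\pi_1$ of the components of $A$) that naturally acts on the equivariant nerve.

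\medskip

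First I would set $G = \pi_1(X)$, enumerate the components $A_1,\dots,A_m$ of $A$, and put $H_j = \pi_1(A_j) \hookrightarrow G$, which is injective by assumption. Lifting $\calU$ to a $G$-equivariant cover $\widetilde{\calU}$ of the universal cover $\widetilde X$ and forming its nerve, the stabilisers of simplices in the equivariant nerve are precisely finite intersections of $G$-conjugates of the subgroups $\im(\pi_1(U)\hookrightarrow G)$, $U\in\calU$. Thus the hypothesis that $\calU$ is uniformly boundedly acyclic in $X$ translates, by Definition~\ref{def:ubacgroup}, into the statement that the family $\F$ of simplex-stabilisers of the equivariant nerve is uniformly boundedly acyclic in the sense used by Theorem~\ref{thm:relative vanishing theorem}; an analogous statement holds at each $H_j$, because the $\pi_1$-injectivity of $A_j$ ensures that the corresponding restricted family $\F|_{H_j}$ inside $H_j$ also consists of intersections of conjugates of the $\pi_1$-images of the members of the restricted cover, and hence inherits uniform bounded acyclicity.

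\medskip

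Next I would feed this data into Theorem~\ref{thm:relative vanishing theorem}. In the weakly convex case the theorem supplies a factorisation of the relative comparison map through the cohomology of the equivariant nerve pair, and the relative multiplicity $\mult_A(\calU)$ bounds the dimension of that nerve pair; since the cohomology of a CW-pair vanishes strictly above its dimension, this yields the vanishing of $\comp^k$ for all $k\ge \mult_A(\calU)$, proving~(i). For~(ii), the stronger convexity hypothesis guarantees the existence of an honest nerve map $\nu\colon (X,A)\to (|N(\calU)|,|N_A(\calU)|)$ realising the abstract factorisation geometrically, so the factorisation of Theorem~\ref{thm:relative vanishing theorem} can be rewritten in the claimed triangular form with $H^*(\nu;\IR)$.

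\medskip

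The main obstacle I anticipate is not in the deduction itself, which is essentially a translation of hypotheses, but in making sure the relative bookkeeping really matches: that the finitely-many-components and $\pi_1$-injective hypotheses on $A$ are exactly what is needed to transfer uniform bounded acyclicity from the ambient family on $G$ down to each restricted family on $H_j$ in a single uniform constant, and that the subcomplex $N_A(\calU)\subseteq N(\calU)$ produced by the construction agrees with the $\calH$-part of the group-pair classifying space $E_\F(G,\calH)$ on which Theorem~\ref{thm:relative vanishing theorem} is modelled. Once these identifications are in place, both statements fall out as direct specialisations.
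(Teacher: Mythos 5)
Your proposal follows essentially the same route as the paper: take $\F$ to be the intersection-closed family generated by $\calU$ (together with the trivial subgroup), observe that uniform bounded acyclicity of the cover makes $\F$ and each restricted family $\F|_{H_j}$ uniformly boundedly acyclic (the latter simply because subsets of uniformly boundedly acyclic sets of groups are again uniformly boundedly acyclic), and then apply Theorem~\ref{thm:relative vanishing theorem}. The only point worth making explicit is that Theorem~\ref{thm:relative vanishing theorem} is stated in terms of \emph{strong $H^*_b$-admissibility} rather than uniform bounded acyclicity, so you need the bridge provided by Proposition~\ref{prop:ubacadm}; with that citation added, your argument is the paper's.
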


Here $\mult_A(\calU)$ denotes the relative multiplicity of~$\calU$ 
with respect to~$A$ (Definition~\ref{def:relative:mult})
and the simplicial complex~$N_A(\calU)$ is a suitable subcomplex of
the nerve~$N(\calU)$ (Definition~\ref{defn:nerve pair}).

In the absolute case, Ivanov proved a similar vanishing theorem for
\emph{weakly boundedly acyclic covers} using spectral 
sequences~\cite{ivanov_bac_covers}. 
Our notion of uniformly boundedly acyclic covers is similar,
but the relation between the two is unclear (Remark~\ref{rem:Ivanov}).

Theorem~\ref{thm:intro:vanishing UBAc} applies in particular to
relative covers that are amenable. 
We introduce the \emph{relative amenable \category}
$\relcat_\AME(X,A)$ (Definition~\ref{defn:relative category})
as the minimal relative multiplicity of weakly convex
relative amenable covers of~$(X,A)$ by path-connected open subsets.

\begin{thm}[Corollary~\ref{cor:rel:van:thm:ame}]
\label{thm:intro:relvan}	
	Let~$(X,A)$ be a CW-pair with path-connected ambient space~$X$.
	Assume that~$A$ consists of finitely many connected components,
	each of which is $\pi_1$-injective in~$X$. Then the comparison map
	\[
		\comp^k\colon H^k_b(X,A;\IR)\to H^k(X,A;\IR)
	\]
	vanishes in all degrees~$k\ge \relcat_\AME(X,A)$.
	
	In particular, if~$(M,\partial M)$ is an oriented compact connected triangulable
	manifold with $\pi_1$-injective boundary components and
	$\relcat_\AME(M,\partial M)\le \dim(M)$, then the relative simplicial
	volume~$\|M,\partial M\|$ vanishes.
\end{thm}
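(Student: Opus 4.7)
The plan is to reduce the corollary directly to the uniformly boundedly acyclic vanishing statement of Theorem~\ref{thm:intro:vanishing UBAc}(i). The bridge from ``amenable'' to ``uniformly boundedly acyclic'' is the observation that every amenable relative cover is automatically a uniformly boundedly acyclic relative cover: finite intersections of conjugates of amenable subgroups of~$\pi_1(X)$ are again amenable, and the class of amenable groups is uniformly boundedly acyclic (a classical fact going back to Johnson and Ivanov, where averaging against an invariant mean in each degree produces primitives with a uniform norm bound). Granted this, by the definition of~$\relcat_\AME(X,A)$ I may choose a weakly convex relative amenable cover~$\calU$ of~$(X,A)$ by path-connected open subsets with $\mult_A(\calU) = \relcat_\AME(X,A)$; this~$\calU$ is then uniformly boundedly acyclic, so Theorem~\ref{thm:intro:vanishing UBAc}(i) yields $\comp^k = 0$ for all $k \geq \relcat_\AME(X,A)$, proving the first assertion.

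For the manifold statement, set $n = \dim(M)$; the hypothesis $\relcat_\AME(M, \partial M) \leq n$ together with the first part implies that $\comp^n \colon H^n_b(M, \partial M; \IR) \to H^n(M, \partial M; \IR)$ is the zero map. To conclude that $\|M, \partial M\| = 0$, I would appeal to the duality principle for relative simplicial volume: the evaluation pairing of $H^n_b(M, \partial M; \IR)$ with the real fundamental class $[M, \partial M]_\IR$ factors through~$\comp^n$, and the $\ell^1$-seminorm $\|[M, \partial M]_\IR\|_1$ equals the supremum of these pairings over bounded classes of sup-norm at most one. Since the factorisation is zero, the seminorm must vanish.

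The main obstacle, in my view, is verifying the first step of the first paragraph: one must check that the paper's definition of uniform bounded acyclicity (which involves arbitrary finite intersections of conjugates) is genuinely satisfied by amenable subgroups with a single uniform constant per degree, and that the resulting data match the hypotheses of Definition~\ref{def:ubacgroup}. Once that is settled, the argument is a direct concatenation of already-stated results and the standard Hahn--Banach duality between the $\ell^1$-seminorm on relative singular chains and bounded relative cochains.
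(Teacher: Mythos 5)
Your proposal is correct and takes essentially the same route as the paper: the paper's own proof instantiates the relative vanishing theorem (Theorem~\ref{thm:relative vanishing theorem}) with the intersection-closed family~$\AME$ (whose strong $H^*_b$-admissibility rests on exactly the uniform bounded acyclicity of amenable collections, Example~\ref{ex:amenable UUBC}, that you invoke), whereas you route through Corollary~\ref{cor:vanishing UBAc} via the observation that every relative $\AME$-cover is uniformly boundedly acyclic --- a reduction the paper itself makes explicit. Your treatment of the manifold case via the duality principle (Proposition~\ref{duality:principle}), using triangulability to obtain the required CW-pair structure, also matches the paper's argument.
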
 

In the absolute case, every cover is a weakly convex relative cover and hence
$\relcat_\AME(X,\emptyset)$ is the minimal multiplicity of
amenable covers of~$X$. For a CW-complex~$X$, this coincides with the minimal
\emph{cardinality} of amenable covers of~$X$ by not necessarily path-connected subsets~\cite[Remark~3.13]{CLM20}.
The latter quantity is called the \emph{amenable category} $\cat_\AME(X)$ 
(Remark~\ref{rem:abscat}),
a notion that is modelled on the classical LS-category~\cite{CLOT03}.

As an application of Theorem~\ref{thm:intro:relvan}, we give an
alternative proof of a relative vanishing theorem, which is a
consequence of Gromov's vanishing theorem for non-compact manifolds
(Theorem~\ref{thm:rel:van:thm:LMR}).

Our methods for equivariant nerve pairs and relative classifying
spaces also lead to vanishing results for $\ell^2$-Betti numbers of
aspherical CW-pairs with small relative amenable \category\
(Theorem~\ref{thm:l2rel}). In the absolute case
(Corollary~\ref{cor:l2abs}), this recovers a result by
Sauer~\cite[Theorem~C]{Sauer09}.

%%%%%%%%%%%%%%%%%%
\subsection{Glueings}

We adapt the additivity of relative simplicial volume for
glueings along amenable boundaries~\cite{vbc,BBFIPP,Kuessner}
to situations with boundedly acyclic boundaries. As we move away
from amenability, we lose control on the norm, and thus only
retain control on the vanishing behaviour.

\begin{thm}[Theorem~\ref{thm:bacglue}]
  Let $n\ge 3$ and $(M_i,\partial M_i)_{i\in I}$ be a finite
  collection of oriented compact connected $n$-manifolds.  Assume that
  every connected component of every boundary component~$\partial M_i$
  has boundedly acyclic fundamental group.  Let~$\calN$ be a set
  of~$\pi_1$-injective boundary components of the~$(M_i)_{i\in I}$ and let
  $(M,\partial M)$ be obtained from~$(M_i,\partial M_i)_{i\in I}$ by a
  pairwise glueing of the boundary components in~$\calN$.
	
  If~$\calN$, viewed as a set of subsets of~$M$, is uniformly boundedly
  acyclic of order~$n$ in~$M$, then the following are equivalent:
  \begin{enumerate}[label=\enum]
  \item We have $\|M,\partial M\|=0$;
  \item For all $i\in I$, we have $\|M_i,\partial M_i\|=0$.
  \end{enumerate} 
\end{thm}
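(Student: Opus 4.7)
The plan is to handle the two implications separately, reducing both to a bounded Mayer--Vietoris-type statement for the glueing decomposition of~$M$. Let $N := \bigsqcup_{N' \in \calN} N'$, viewed as a disjoint collection of two-sided hypersurfaces in the interior of~$M$. Since every component $N'$ has boundedly acyclic fundamental group and is $\pi_1$-injective in~$M$, Gromov's mapping theorem gives $H^k_b(N'; \R) = 0$ for $k \geq 1$, and hence $H^k_b(N; \R) = 0$ for $k \geq 1$.

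For (ii)~$\Rightarrow$~(i), I would argue directly on chains. Given $\varepsilon > 0$, choose relative fundamental cycles $z_i \in C_n(M_i; \R)$ with $\|z_i\|_1 < \varepsilon$ and $\partial z_i \in C_{n-1}(\partial M_i; \R)$, and include them into $C_n(M; \R)$ via $M_i \hookrightarrow M$. Their sum $z := \sum_i z_i$ has boundary of the form $d + c$ with $d \in C_{n-1}(\partial M; \R)$ and $c$ supported on~$N$. Orientation compatibility of the glueing forces, for each $N' \in \calN$, the two contributions from the two adjacent~$M_i$'s to cancel in homology, so $c$ restricts to a null-homologous cycle on every~$N'$. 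Uniform bounded acyclicity of~$\calN$ then yields $w \in C_n(M; \R)$ with $\partial w = -c$ and $\|w\|_1 \leq C\,\|c\|_1$ for a constant $C$ depending only on $\calN \subset M$. The chain $z + w$ is a relative fundamental cycle of $(M, \partial M)$ of norm $O(\varepsilon)$, and letting $\varepsilon \to 0$ forces $\|M, \partial M\| = 0$.

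For the converse (i)~$\Rightarrow$~(ii), I would pass to bounded cohomology. Since $\partial M \cap N = \emptyset$, one has $H^k_b(\partial M \cup N, \partial M; \R) \cong H^k_b(N; \R) = 0$ for $k \geq 1$; the long exact sequence of the triple $(M, \partial M \cup N, \partial M)$ then produces an isomorphism
\[
  H^n_b(M, \partial M \cup N; \R) \;\xrightarrow{\cong}\; H^n_b(M, \partial M; \R)
\]
for all $n \geq 3$. A bounded Mayer--Vietoris argument, invoking the uniform bounded acyclicity of~$\calN$ of order~$n$ in~$M$, further identifies
\[
  H^n_b(M, \partial M \cup N; \R) \;\cong\; \bigoplus_{i \in I} H^n_b(M_i, \partial M_i; \R),
\]
compatibly with the analogous (singular) decomposition sending $[M, \partial M]^* \mapsto \bigoplus_i [M_i, \partial M_i]^*$ and with the comparison maps. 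By the duality principle for simplicial volume, $\|M, \partial M\| = 0$ is equivalent to $[M, \partial M]^*$ lying outside the image of~$\comp^n$, and the splitting transfers this property to each summand, yielding $\|M_i, \partial M_i\| = 0$ for every $i \in I$.

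The main obstacle is precisely the bounded Mayer--Vietoris identification used in the last step: this is where the hypothesis that $\calN$ is uniformly boundedly acyclic of order~$n$ (rather than merely in order~$1$) is used in full strength. I would derive it from the relative nerve-pair and cover framework developed for Theorem~\ref{thm:intro:vanishing UBAc}, applied to the cover of $M$ by small open thickenings of the pieces~$M_i$, whose pairwise intersections are tubular neighborhoods of the components of~$\calN$ and whose triple intersections are empty. Since those intersections are uniformly boundedly acyclic up to order~$n$, the associated bounded Mayer--Vietoris spectral sequence for the relative pair $(M, \partial M \cup N)$ collapses in the relevant range, producing the desired splitting.
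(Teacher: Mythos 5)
Your direction (ii)~$\Rightarrow$~(i) is essentially the paper's argument (Proposition~\ref{prop:ubcupperestimate}): the standard filling of the cancelling boundary contributions on the glueing locus. Two small corrections: the filling constant comes from the uniform boundary condition of the chain complex $C_*(N;\R)$ of the glueing locus itself, which holds because each component of~$\calN$ is boundedly acyclic and there are only finitely many of them (Theorem~\ref{thm:ubcbanach}, Example~\ref{exa:ubcgroup}); the order-$n$ uniform bounded acyclicity \emph{in~$M$} is not what is used here. Also the filling chain $w$ must be taken in $C_n(N;\R)$, not merely in $C_n(M;\R)$, so that one can check locally on each piece that $z+w$ is still a relative fundamental cycle.

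The direction (i)~$\Rightarrow$~(ii) has a genuine gap at the step you yourself flag as the main obstacle: the claimed identification $H^n_b(M,\partial M\cup N;\R)\cong\bigoplus_{i}H^n_b(M_i,\partial M_i;\R)$ is not established, and the route you propose cannot produce it. The nerve-pair/cover machinery of Theorem~\ref{thm:relative vanishing theorem} applies only to \emph{uniformly boundedly acyclic} covers, i.e.\ covers whose members have controlled fundamental groups; the thickenings of the pieces~$M_i$ have fundamental groups~$\pi_1(M_i)$, which are arbitrary, so this cover is not an $\F$-cover for any admissible family and the vanishing framework says nothing about it. More fundamentally, bounded cohomology does not satisfy excision, so no Mayer--Vietoris isomorphism of this kind can be expected; indeed the paper never proves such an isomorphism. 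What the paper proves (Theorem~\ref{thm:graphbac}) is only that the restriction map $H^n_b(\pi_1(M);\R)\to\bigoplus_{i}H^n_b(\pi_1(M_i);\R)$ is \emph{surjective}, by realising $\pi_1(M)$ as the fundamental group of a finite graph of groups whose edge groups are the fundamental groups of the components of~$\calN$, computing both sides via resolutions coming from uniformly boundedly $n$-acyclic actions on the sets $S=(G\times V)\sqcup\coprod_e G/G_e$ and $S_v$ (this is exactly where the order-$n$ hypothesis enters), and invoking the explicit cochain-level right inverse of Bucher--Burger--Frigerio--Iozzi--Pagliantini--Pozzetti in degrees~$\ge 2$. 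Surjectivity, combined with the long exact sequences killing $N$ and $\partial M$ and the duality principle, suffices for the contrapositive. To repair your proof you would need to replace the Mayer--Vietoris step by this graph-of-groups surjectivity argument (or an equivalent substitute).
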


%%%%%%%%%%%%%%%%%%
\subsection{Mapping degrees}

One of the classical applications of simplicial volume is an a priori
estimate on mapping degrees~\cite{vbc, Thurston, FM:ideal}. In
contrast, the exact relation between mapping degrees and monotonicity
of (generalised) LS-category invariants is still wide
open~\cite{Rudyak_monotonicity, CLM20}.

In the absolute case, Eilenberg and Ganea showed that the LS-category
invariant for the family containing only the trivial subgroup of an 
aspherical space recovers the cohomological dimension of its fundamental group~\cite{eilenbergganea}.
Moreover, cohomological dimension one can be characterised in terms
of freeness. Thus, the monotonicity problem for (generalised) LS-category
leads to inheritance properties of fundamental groups under maps
of non-zero degree.

We use equivariant and group cohomological methods to establish the
following relative version (and a simplified proof) of a monotonicity
result by Dranishnikov and Rudyak for closed manifolds~\cite{Dranishnikov-Rudyak09}:

\begin{thm}[Corollary~\ref{cor:degree one relative}]
  Let $f\colon (M,\partial M)\to (N,\partial N)$ be a map between
  oriented compact connected manifolds of the same dimension with
  $\pi_1$-injective boundary components.  Let $\partial
  M=\coprod_{i=1}^m M_i$ and $\partial N=\coprod_{i=1}^n N_i$ be
  decompositions into connected components.  If $\deg(f)=\pm 1$
  and there exists a free group~$F_M$ 
  such that~$$\pi_1(M)\cong F_M\ast \Asterisk_{i=1}^m \pi_1(M_i)\,,$$ then
  there exists a free group~$F_N$ such that~$\pi_1(N)\cong F_N\ast \Asterisk_{i=1}^n
  \pi_1(N_i)$.  
\end{thm}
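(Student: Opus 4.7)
The plan is to reformulate both hypothesis and conclusion in terms of the cohomological dimension of group pairs for families of subgroups, and then to use the degree-$\pm 1$ hypothesis to transfer this dimension bound via the equivariant machinery developed earlier in the paper.

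Let $\F_M$ denote the family of subgroups of $\pi_1(M)$ generated by conjugates of the images of $\pi_1(M_i)\into\pi_1(M)$, and define $\F_N$ analogously. By the classical Bass--Serre/relative Stallings--Swan correspondence, the hypothesis $\pi_1(M)\cong F_M\ast\Asterisk_{i=1}^m\pi_1(M_i)$ is equivalent to the existence of a one-dimensional model for $E_{\F_M}\pi_1(M)$: namely a Bass--Serre tree on which $\pi_1(M)$ acts with trivial edge stabilisers and vertex stabilisers in $\F_M$; in particular, $\cd(\pi_1(M),\F_M)\leq 1$. Conversely, any group $G$ admitting a one-dimensional $E_{\F_N}G$ of this form decomposes as a free product of a free group with representatives of the conjugacy classes in $\F_N$. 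Hence it suffices to establish $\cd(\pi_1(N),\F_N)\leq 1$.

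For this I would use functoriality: since $f$ sends each component of $\partial M$ into a component of $\partial N$ and $\pi_1$-injectivity holds on both sides, $\pi_1(f)\colon\pi_1(M)\to\pi_1(N)$ sends $\F_M$-subgroups into $\F_N$-subgroups, yielding an equivariant map $\widetilde{f}\colon E_{\F_M}\pi_1(M)\to E_{\F_N}\pi_1(N)$ covering $\pi_1(f)$. Via the relative classifying-space construction introduced earlier, this fits into a commutative square relating the pairs $(M,\partial M)$ and $(N,\partial N)$ to the relative classifying spaces of $(\pi_1(M),\F_M)$ and $(\pi_1(N),\F_N)$. The main step then invokes the degree-$\pm 1$ hypothesis: by Poincar\'e--Lefschetz duality for oriented compact manifolds with boundary, the transfer $f_!$ satisfies $f_!\circ f^*=\pm\id$ on relative (co)homology with arbitrary coefficients, so $f^*$ is split injective. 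Combined with $\widetilde{f}$ and the commutative square, any non-trivial class in $H^k(\pi_1(N),\F_N;A)$ for $k\geq 2$ pulls back to a non-trivial class in $H^k(\pi_1(M),\F_M;A)$, contradicting the one-dimensionality established above; hence $\cd(\pi_1(N),\F_N)\leq 1$ and the desired decomposition of $\pi_1(N)$ follows.

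The main obstacle will be controlling the relationship between manifold cohomology $H^*(N,\partial N;-)$ and relative group cohomology $H^*(\pi_1(N),\F_N;-)$: since $(N,\partial N)$ is not assumed aspherical relative to its boundary, the classifying map need not be a cohomology isomorphism, so one must argue that the pertinent classes do lift to $(N,\partial N)$ before the topological transfer can be applied. This is exactly where the equivariant nerve and relative classifying-space machinery of the paper, combined with a Shapiro-type identification of the relevant cohomology groups, supplies the key input.
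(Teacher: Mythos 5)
Your overall strategy matches the paper's: translate the free-product hypothesis into $\cd(\pi_1(M),\pi_1(\partial M))\le 1$ via the relative Stallings--Swan theorem (Theorem~\ref{thm:relative Stallings-Swan}), use the Umkehr map from Poincar\'e--Lefschetz duality to make $H^*(f)$ split injective when $\deg(f)=\pm1$, and compare manifold cohomology with relative group cohomology through the classifying maps. However, the step you yourself flag as ``the main obstacle'' is exactly where the proof lives, and your proposed resolution does not work. You want to pull back a non-trivial class in $H^k(\pi_1(N),\pi_1(\partial N);A)$ for an arbitrary $k\ge 2$ through the square, but this requires the classifying map $H^k(\varphi_{(N,\partial N)})\colon H^k(\pi_1(N),\pi_1(\partial N);A)\to H^k(N,\partial N;A)$ to be injective in degree~$k$, which fails in general for $k\ge 3$ without asphericity. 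The equivariant nerve machinery and Shapiro-type identifications you invoke do not address this; they play no role here. The paper's fix consists of two concrete ingredients you are missing: first, a \emph{dimension-shifting} argument showing that $\cd_R(\pi_1(N),\pi_1(\partial N))>1$ already forces $H^2(\pi_1(N),\pi_1(\partial N);V)\neq 0$ for some module $V$, so the whole argument can be run in degree exactly~$2$; second, the observation (Lemma~\ref{lemma:varphi:2:injective}, proved by the four lemma for monomorphisms) that $H^2(\varphi_{(X,A)})$ \emph{is} injective for any such CW-pair, because $E_{\TR}(\pi_1(X))$ and the $E_{\TR}(\pi_1(A_i))$ are obtained from $\widetilde X$ and $\widetilde{A_i}$ by attaching cells of dimension $\ge 3$. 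Without restricting to degree~$2$ your contradiction does not close.

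A secondary point: you phrase everything in terms of families of subgroups and one-dimensional models for $E_{\F_M}\pi_1(M)$, i.e.\ a Bredon-type cohomological dimension. The relative Stallings--Swan theorem as used in the paper concerns the Bieri--Eckmann relative cohomology $H^*(G,\calH;V)=H^*_G(E_{\TR}(G,\calH);V)$ of the group \emph{pair}, not the Bredon cohomology of $E_{\F\spann{\calH}}G$; these agree in low degrees only after an argument (the paper explicitly notes that their relation is a subject of separate investigation). Your claimed equivalence between the free-product decomposition and the existence of a one-dimensional $E_{\F_N}G$, and in particular its converse, is itself a non-trivial assertion that you would need to justify; it is cleaner to work directly with $\cd(G,\calH)$ and cite Theorem~\ref{thm:relative Stallings-Swan} as stated.
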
        

For closed manifolds our approach also yields inheritance properties
for virtual freeness:

\begin{thm}[Corollary~\ref{cor:finmono}]
  Let $f\colon M\to N$ be a map between oriented closed connected
  manifolds of the same dimension.  If $\deg(f)\neq 0$ and $\pi_1(M)$
  is virtually free, then also~$\pi_1(N)$ is virtually free.
\end{thm}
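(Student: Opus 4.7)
The plan is to carry out the proof of Corollary~\ref{cor:degree one relative} with the trivial family of subgroups replaced by the family~$\FIN$ of finite subgroups. The algebraic hinge, corresponding to the characterisation of free groups by cohomological dimension~$1$ used in Corollary~\ref{cor:degree one relative}, is the theorem of Karrass--Pietrowski--Solitar and Dunwoody: a finitely generated group~$G$ is virtually free if and only if there exists a $1$-dimensional model for~$E_\FIN G$. The statement therefore reduces to showing that if $\pi_1(M)$ admits a $1$-dimensional model for~$E_\FIN \pi_1(M)$, then so does~$\pi_1(N)$.

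First, I would use the virtual freeness of~$\pi_1(M)$ to fix a $1$-dimensional model for~$E_\FIN \pi_1(M)$. Since finite subgroups are uniformly boundedly acyclic (indeed amenable), the equivariant nerve pair and relative classifying-space machinery of Theorem~\ref{thm:intro:vanishing UBAc} applies to the family~$\FIN$ and produces a covering datum on~$M$ of small multiplicity, witnessing that the classifying map $c_M\colon M\to B\pi_1(M)$ factors up to homotopy, at the level of the relevant homological invariants, through a low-dimensional classifying space for the pair $(\pi_1(M),\FIN)$ in the sense of~$\EFGH$.

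Second, since $\deg(f)\neq 0$, we have $(Bf)_\ast (c_M)_\ast[M] = \deg(f)\cdot (c_N)_\ast[N]$ in $H_n(B\pi_1(N);\Q)$, so rationally $(c_N)_\ast[N]$ lies in the image of~$(Bf)_\ast$. The $M$-side factorisation then transports along~$f$ to a corresponding factorisation for~$N$ relative to~$\FIN$, by exactly the same equivariant-cover mechanism as in the proof of Corollary~\ref{cor:degree one relative}. Rerunning that monotonicity argument with~$\FIN$ in place of the trivial family yields a $1$-dimensional model for~$E_\FIN \pi_1(N)$, and Dunwoody's theorem then delivers virtual freeness of~$\pi_1(N)$.

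The main obstacle will be the transfer step. The mapping degree naturally provides only a rational (or, up to division by~$\deg(f)$, integral but with denominators) homological statement, whereas virtual freeness is sensitive to integral/Bredon-cohomological structure. One must therefore be careful that the low-dimensional classifying-space witness obtained for~$\pi_1(N)$ is geometric rather than merely rational-cohomological, in analogy with the upgrade from a rational free-factor statement to an honest free-product decomposition in the proof of Corollary~\ref{cor:degree one relative}. The key technical input enabling this upgrade is precisely the uniform bounded acyclicity of finite groups, which is what makes the nerve/classifying-space techniques behave uniformly over the whole family~$\FIN$ rather than only rationally.
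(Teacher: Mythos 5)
Your proposal takes a genuinely different route from the paper, and it contains a gap at exactly the point you flag as ``the main obstacle'': the transfer step is never actually carried out. You propose to transport a $1$-dimensional model for $E_\FIN\pi_1(M)$ to one for $E_\FIN\pi_1(N)$ ``by exactly the same equivariant-cover mechanism as in the proof of Corollary~\ref{cor:degree one relative}'', but that proof contains no cover or nerve argument at all: it is a purely cohomological contraposition argument using the Poincar\'e--Lefschetz Umkehr map on ordinary (twisted-coefficient) cohomology. There is no Umkehr/transfer map on Bredon cohomology or on the homotopy category of $G$-CW-complexes with finite isotropy that a nonzero mapping degree would give you, and uniform bounded acyclicity (or amenability) of finite groups is irrelevant here --- it feeds into the bounded-cohomology vanishing results, not into any mechanism for upgrading a rational cohomological statement to a geometric $1$-dimensional $E_\FIN$-model. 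As written, the step from ``$(c_N)_*[N]$ lies rationally in the image of $(Bf)_*$'' to ``$\pi_1(N)$ admits a $1$-dimensional $E_\FIN$-model'' is an assertion, not a proof.

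The paper's actual argument sidesteps this entirely, and the key observation is that \emph{no integral or Bredon upgrade is needed}: Theorem~\ref{thm:degree cd one}~(ii) shows, via the Umkehr map with $\IQ$-coefficients (where multiplication by $\deg(f)\neq 0$ is injective), that $\cd_\IQ(\pi_1(M))\le 1$ implies $\cd_\IQ(\pi_1(N))\le 1$. Dunwoody's theorem characterises $\cd_R\le 1$ over arbitrary rings in terms of fundamental groups of graphs of finite groups (with orders invertible in $R$), so over $\IQ$ this is exactly the class of fundamental groups of graphs of finite groups; for finitely generated groups --- which fundamental groups of closed manifolds are --- this coincides with virtual freeness. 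So the rational statement that the degree hypothesis naturally provides is precisely the right invariant, provided one uses Dunwoody's $\cd_\IQ$-characterisation rather than the $E_\FIN$-characterisation you chose. If you want to salvage your approach, you would need to replace the vague transfer step by this $\cd_\IQ$ argument, at which point the $E_\FIN$ and nerve machinery becomes superfluous.
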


%%%%%%%%%%%%%%%%%%%%%%%%%%%%%%%%%%%%%%%%%%%%%%%%%%%%%%%%%%%
\subsection{Conventions}\label{subsec:convs}

In this article, we adhere to the following conventions: 

Instead of the usual notion of families of sets, groups, modules, \dots,
we will speak of \emph{collections}; this is to avoid confusion with
the term ``families of subgroups''. I.e., a collection~$(H_i)_{i \in I}$
of groups [or sets, \dots] is a map~$I \to \Group$, $i \mapsto H_i$
from a set~$I$ to the class of all groups [or sets, \dots]. In particular,
collections can contain repetitions. 

Families of subgroups will only be closed under conjugation
but not necessarily under finite intersections or taking subgroups
(Definition~\ref{def:family}).

All groups will be discrete groups; in particular, we consider
bounded cohomology of discrete groups and
$G$-CW-complexes for discrete groups~$G$. 
The geometric realisation of $G$-simplicial complexes
will always be equipped with the $G$-CW-structure coming from
the barycentric subdivision (Example~\ref{ex:from:sc:to:cw}).

Given a compact manifold~$M$ with non-empty boundary, we say that $M$
has $\pi_1$-\emph{injective boundary} if every connected component
of~$\partial M$ is $\pi_1$-injective in~$M$. 

We usually refrain from spelling out basepoints for fundamental
groups.  Strictly speaking, fixing basepoints is necessary to make the
notion of the image of ``the'' fundamental group of a subspace in
``the'' fundamental group of an ambient space precise. However, we
will always deal with situations concerning conjugation-invariant
properties or concerning collections of all conjugates of such
subgroups. Therefore, all choices of basepoints would lead to the same
outcome.

We always work with open covers consisting of path-connected sets.
We explain in Remark~\ref{rem:abscat} why this condition is not
restrictive in our setting.

%%%%%%%%%%%%%%%%%%%%%%%%%%%%%
\begin{ack}
We would like to thank Wolfgang L\"uck for helpful comments on
classifying spaces of families.  We are grateful to George
Raptis for many useful discussions on bounded acyclicity, vanishing
results, and glueing formulae.  The first author thanks
his advisors Nansen Petrosyan and Ian Leary for several helpful conversations.
\end{ack}

 \tableofcontents

%%%%%%%%%%%%%%%%%%%%%%%%%%%%%%%%%%%%%%%%%%%%%%%%%%%%%%%%%%%
\section{Classifying spaces of group pairs for families of subgroups}

The goal of this section is to introduce classifying
spaces for families of subgroups in a relative setting.
We first recall classifying spaces for families of subgroups
and then explain the extension to the relative setting.

This is motivated by our geometric situations of topological
pairs~$(X,A)$ (e.g., manifolds with boundary), where often two classes
of subgroups of the fundamental group~$G\coloneqq \pi_1(X)$ will be
involved:
\begin{itemize}
\item A family~$\F$ of subgroups of~$G$, describing the allowed
  fundamental groups of sets in open covers of~$X$;
\item A collection~$\calH$ of subgroups of~$G$, coming from the
  fundamental groups of the components of~$A$.
\end{itemize}

%%%%%%%%%%%%%%%%
\subsection{$G$-CW-complexes}

We briefly recall the definitions of $G$-CW-complexes and the
induction functor. For more background on $G$-CW-complexes we refer
the reader to the literature~\cite{lueck_TG,Lueck05}.

\begin{defn}[$G$-CW-complex]
  A \emph{$G$-CW-complex}~$Y$ is a~$G$-space equipped with
  a~$G$-invariant filtration
  \begin{equation}\label{eqn:G-CW filtration}
    \emptyset = Y^{(-1)}\subset Y^{(0)}\subset Y^{(1)}\subset \cdots\subset
    Y^{(n)}\subset \cdots\subset Y
  \end{equation}
  such that the following hold:
  \begin{itemize}
  \item $Y=\bigcup_{n\ge 0}Y^{(n)}$;
  \item $Y$ carries the weak topology with respect to the 
    filtration~\eqref{eqn:G-CW filtration};
  \item $Y^{(n)}$ is obtained from~$Y^{(n-1)}$ as a~$G$-pushout of the form
    \[\begin{tikzcd}
    \coprod_{i\in I_n}G/H_i\times S^{n-1}\ar{r}\ar{d} &
    Y^{(n-1)}\ar{d} \\
    \coprod_{i\in I_n}G/H_i\times D^n\ar{r} &
    Y^{(n)} \,.
    \end{tikzcd}\]
  \end{itemize}
  The subgroups~$H_i$ of~$G$ and their conjugates are called \emph{isotropy groups} of~$Y$.
  If all isotropy groups of~$Y$ are trivial, we also say that
  $Y$ is a \emph{free} $G$-CW-complex.
  
  A morphism of $G$-CW-complexes is a $G$-map.
\end{defn}

For example, the universal covering $\widetilde{X}$ of a
path-connected CW-complex $X$ is a free $\pi_1(X)$-CW-complex
with respect to the CW-structure inherited from~$X$.

\begin{ex}[Barycentric subdivision of $G$-simplicial complexes]\label{ex:from:sc:to:cw}
  Let $N$ be an (abstract) simplicial complex and let $G$ be a group
  acting on~$N$ via simplicial automorphisms.  Then the geometric
  realisation~$|N'|$ of the barycentric subdivision is a $G$-CW-complex,
  while $|N|$ need not be a $G$-CW-complex in general. The standard
  homeomorphism between the geometric realisations $|N| \to |N'|$ is a
  $G$-homeomorphism.  Therefore, $|N|$ admits a canonical structure as a
  $G$-CW-complex and we will always use this $G$-CW-structure.
\end{ex}

Given a subgroup $H \subset G$, there is a natural way to associate to
an $H$-CW-complex a $G$-CW-complex.

\begin{defn}[Induction]\label{defn:induction}
  Let $H$ be a subgroup of $G$. The \emph{induction (along the
    inclusion $H\subset G$)} is the functor
  \[
  G\times_H(-)\colon H\text{-CW-complexes}\to G\text{-CW-complexes}
  \]
  that assigns to an~$H$-CW-complex~$B$ the~$G$-CW-complex~$G\times_H
  B$, that is the quotient of~$G\times B$ by the (right)~$H$-action
  $(g,b)\cdot h=(gh,h^{-1}b)$. Here $G$ acts on~$G\times_H B$ by left
  multiplication.  We denote elements of $G\times_H B$ by $[g,b]$.
	
  For an~$H$-map~$f\colon B\to C$ between $H$-CW-complexes, the
  induced $G$-map $$G\times_H f\colon G\times_H B\to G\times_H C$$ is
  given by $G\times_H f([g,b])= [g,f(b)] $.
\end{defn}

The induction functor is left-adjoint to the restriction functor,
which associates to a $G$-CW-complex the same space viewed as an
$H$-CW-complex.

%%%%%%%%%%%%%%%%%%%%%%5
\subsection{Classifying spaces for families of subgroups}\label{sec:families}

We use the following (non-standard) convention for families of
subgroups:

\begin{defn}[Family of subgroups]\label{def:family}
  Let $G$ be a group and $\F$ be a non-empty set of subgroups of~$G$.
  We say that $\F$ is a \emph{family of subgroups} (or
  \emph{conjugation-closed family of subgroups}) of~$G$ if it is
  closed under conjugation.  We say that $\F$ is an
  \emph{intersection-closed family of subgroups} of~$G$ if it is
  closed under conjugation and taking finite intersections.
\end{defn}

\begin{ex}\label{ex:family:subgroups}
  The following are examples of families of subgroups:
  \begin{enumerate}[label=\enum]
  \item The set of isotropy groups of a $G$-CW-complex;

  \item The family~$\TR$ consisting only of the trivial subgroup;
    
  \item The family~$\FIN$ consisting of all finite subgroups;
    
  \item The family~$\AME$ consisting of all amenable subgroups;

  \item\label{item:restricted family} Let $H$ be a subgroup of $G$ and let $\F$ be a family of
    subgroups of $G$.  Then the set $\F|_H=\{L\subset H\mid L\in\F\}$
    is a family of subgroups of $H$.  
  \end{enumerate}
\end{ex}

\begin{ex}[Families generated by a set of subgroups]\label{ex:family generated}
  Let $G$ be a group and let~$\calG$ be a non-empty set of subgroups.  The
  \emph{intersection-closed family~$\F\spann{\calG}$ generated
    by~$\calG$} is defined to be the smallest (with respect to
  inclusion) intersection-closed family containing $\calG$, that is
  \[
  \F\spann{\calG} = \biggl\{\bigcap_{i=1}^n g_iH_ig_i^{-1}
  \biggm| n \in \N, H_i\in \calG,g_i\in G \biggr\} \,.
  \]
  For~$n\in\IN$ we define the (conjugation-closed) family
  \[
  \F_n\spann{\calG} \coloneqq \biggl\{\bigcap_{i=1}^n g_iH_ig_i^{-1}
  \biggm| H_i\in \calG,g_i\in G\biggr\}\,.
  \]
\end{ex}

Recall that $EG$, the universal covering of an Eilenberg--MacLane
space~$BG$, is a terminal object in the $G$-homotopy category of free
$G$-CW-complexes.  The following is a generalisation to
$G$-CW-complexes with not necessarily trivial isotropy groups.

\begin{defn}[Classifying space for a family of subgroups]
  Let $G$ be a group and let $\F$ be a (conjugation-closed) family of subgroups of~$G$.
  A \emph{classifying space for~$G$ with respect to~$\F$} is
  a $G$-CW-complex~$E$ with the following universal property:
  \begin{itemize}
  \item All isotropy groups of~$E$ lie in~$\F$;
  \item For each $G$-CW-complex~$Y$ whose isotropy groups all lie in~$\F$,
    there is up to $G$-homotopy exactly one $G$-map~$Y \to E$.
  \end{itemize}
  We usually denote such classifying spaces by~$\EFG$ (even
  though they are only unique up to $G$-homotopy equivalence).
\end{defn}

  If $\F$ contains the trivial group, then the universal property
  of~$\EFG$ ensures that there exists a $G$-map $EG \to E_{\F}G$, which
  is unique up to $G$-homotopy.

\begin{thm}[Existence of classifying spaces for families of subgroups]
  \label{thm:EFGex}
  Let $G$ be a group and let $\F$ be a (conjugation-closed) family of
  subgroups.
  \begin{enumerate}[label=\enum]
  \item\label{item:fixed point sets}
    A $G$-CW-complex~$Y$ is a classifying space for~$G$ with respect
    to~$\F$ if and only if the following conditions are satisfied:
    \begin{itemize}
    \item All isotropy groups of~$Y$ lie in~$\F$;
    \item For all~$H \in \F$, the fixed point set~$Y^H$ is contractible.
    \end{itemize}
  \item\label{item:existence}
    There exists a classifying space for~$G$ with respect to~$\F$.
  \end{enumerate}
\end{thm}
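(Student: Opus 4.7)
The plan is to prove parts~(i) and~(ii) using standard equivariant obstruction theory. The key tool is the natural bijection, for any subgroup~$K \subset G$ and any $G$-space~$Y$,
\[
\map_G(G/K, Y) \longleftrightarrow Y^K, \quad f \mapsto f(eK),
\]
together with its parametrised version $\map_G(G/K \times Z, Y) \cong \map(Z, Y^K)$, valid whenever~$K$ acts trivially on~$Z$.

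For the ``only if'' direction of~(i), suppose $Y$ satisfies the universal property. The isotropy condition is part of the definition. For each~$H \in \F$ and~$n \ge 0$, the product~$G/H \times S^n$ with trivial action on the sphere is a $G$-CW-complex whose isotropy groups are conjugates of~$H$, hence lie in~$\F$ by conjugation-closure. Applying the universal property to~$G/H$ and to these products, the bijection above forces $Y^H$ to be non-empty and to satisfy $\pi_n(Y^H) = 0$ for all~$n \ge 0$. Since~$Y^H$ inherits a CW-structure from~$Y$, Whitehead's theorem yields contractibility. For the ``if'' direction, let~$Z$ be any $G$-CW-complex with isotropy in~$\F$; I would construct a $G$-map~$Z \to Y$ by induction on the equivariant skeleta using the pushout description of each step. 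Extending over an equivariant cell~$G/H_i \times D^n$ reduces via adjunction to extending a map~$S^{n-1} \to Y^{H_i}$ over~$D^n$, which is possible because~$Y^{H_i}$ is contractible. Uniqueness up to $G$-homotopy follows from the same inductive argument applied to~$Z \times [0,1]$ with the two given maps prescribed on~$Z \times \{0,1\}$.

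For~(ii), existence of~$\EFG$ is established by iterative cell attachment. Start with $Y_0 \coloneqq \coprod_{H \in \F} G/H$, whose isotropy lies in~$\F$ and whose fixed point set~$Y_0^K$ is non-empty for every~$K \in \F$ (the summand~$G/K$ supplies the point~$eK$). Inductively, given a $G$-CW-complex~$Y_m$ with isotropy in~$\F$, attach equivariant cells of the form~$G/K \times D^{n+1}$ along representatives of generators of $\pi_n(Y_m^K)$, ranging over~$K \in \F$ and~$n \ge 0$, to obtain~$Y_{m+1}$. The colimit $\EFG \coloneqq \bigcup_m Y_m$ has isotropy in~$\F$ and contractible fixed point sets for all~$K \in \F$, so part~(i) identifies it as the desired classifying space. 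An explicit alternative is the infinite join $*_{k=1}^{\infty} \coprod_{H \in \F} G/H$.

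The main obstacle I expect is the bookkeeping around the cell-attachment step in~(ii): one must verify that attaching a cell~$G/K \times D^{n+1}$ affects each fixed point set~$Y^L$ (for~$L \in \F$) exactly through the fixed-point data of~$(G/K)^L$, so that the inductive process progressively kills the homotopy of every~$Y_m^K$ while keeping all isotropy groups inside~$\F$. This relies on conjugation-closure of~$\F$ together with the standard identification of fixed points of induced $G$-spaces, and is the technical heart of the construction.
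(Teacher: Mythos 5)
Your proposal is correct and follows essentially the same route as the paper: part~(i) is the equivariant Whitehead theorem (which the paper simply cites, applied to the map $Y \to G/G$, and which you reprove by hand via the adjunction $\map_G(G/K \times Z, Y) \cong \map(Z, Y^K)$ and skeleton-wise extension), and part~(ii) is the standard construction by inductively attaching equivariant cells to kill the homotopy groups of the fixed point sets, which the paper cites from L\"uck. The only difference is that you spell out the details the paper delegates to references; your observation that conjugation-closure alone suffices for the bookkeeping matches the remark the paper makes immediately after the theorem.
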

\begin{proof}
  \ref{item:fixed point sets} This follows from an equivariant version of the Whitehead
  theorem~\cite[Theorem~1.6]{Lueck05} applied to the map~$Y \to G/G$.
  
  \ref{item:existence} In view of the first part, such a classifying space can be
  constructed by inductively attaching cells to kill homotopy groups
  of the fixed point sets~\cite[Proposition~2.3]{lueck_TG}. 
\end{proof}

\begin{rem}
  We point out that the construction of classifying spaces in
  Theorem~\ref{thm:EFGex}~\ref{item:existence} indeed works for (conjugation-closed)
  families of subgroups with no additional closure properties.  This
  level of generality is usually not considered in the literature,
  where classifying spaces are often defined only for families that
  are intersection-closed or closed under taking arbitrary subgroups.
\end{rem}

Many interesting constructions of classifying spaces for
intersection-closed families, most notably for $\FIN$, are
known~\cite[Section 4]{Lueck05}.  For a (conjugation-closed)
family~$\calG$ of subgroups of~$G$, we can consider the
intersection-closed family~$\F\spann{\calG}$ generated
by~$\calG$ (Example~\ref{ex:family generated}).  Then a model
for~$E_\calG G$ is given by the $G$-CW-subcomplex
of~$E_{\F\spann{\calG}}G$ consisting of all cells with isotropy
in~$\calG$.  
\begin{ex}
Let~$D_\infty=\spann{s,t\mid
  s^2=t^2=e}$ be the infinite dihedral group. A model for~$E_\FIN
D_\infty$ is given by the real line~$\IR$ on which $s$ and~$t$ act via
reflection at~$0,1\in \IR$, respectively. Considering the
(conjugation-closed) family~$\FIN\setminus\TR$, a model
for~$E_{\FIN\setminus\TR}D_\infty$ is given by the
subcomplex~$\IZ\subset \IR$, that is~$D_\infty/\spann{s}\sqcup
D_\infty/\spann{t}$.
\end{ex}

%%%%%%%%%%%%%%%%
\subsection{$(G,\calH)$-CW-pairs}\label{subsec:G-CW pairs}

In this section we introduce a notion of pairs of equivariant
CW-complexes adapted to a collection of subgroups. 

\begin{defn}[Group pair]\label{defn:group pair}
  A \emph{group pair} is a pair~$(G,\calH)$, consisting
  of a group~$G$ and a collection~$\calH$ of
  subgroups of~$G$ (see Section~\ref{subsec:convs} for
  the term ``collection''). 
\end{defn}

For our purposes, the most important examples of group pairs will
arise as pairs of fundamental groups.

\begin{ex}[Fundamental group pair]\label{ex:fundamental group pair}
  Let $(X,A)$ be a CW-pair, where $X$ is
  path-connected, and let $x_0 \in X$. Moreover, we assume that each
  connected component of~$A$ is $\pi_1$-injective in~$X$.  A group
  pair~$(G,\calH)$ is a \emph{fundamental group pair} of~$(X,A)$ (at
  the basepoint~$x_0$) if:
  \begin{itemize}
  \item
    $G = \pi_1(X,x_0)$ and
  \item
    $\calH = (H_i)_{i \in I}$, where $A = \coprod_{i \in I} A_i$ is a
    decomposition of~$A$ into connected components and for each~$i \in
    I$, there exists a basepoint~$x_i \in A_i$ and a path~$\gamma_i$ in~$X$
    from~$x_0$ to~$x_i$, such that $H_i$ is the subgroup
    of~$\pi_1(X,x_0)$ isomorphic to~$\pi_1(A_i,x_i)$ via the
    homomorphism induced by~$\gamma_i$.
  \end{itemize}
  
  In this situation, we will also abuse notation and just
  write~$\pi_1(A_i)$ for this subgroup~$H_i$ in~$G$. It should be
  noted that there is always an implicit (fixed) choice of basepoints
  and paths involved. In many cases, these choices will be of no
  consequence; when these choices would matter, we will be in
  situations, where we include all $G$-conjugates of the~$(H_i)_{i\in I}$ in the
  collection of subgroups in question, and thus avoid ambiguities.
\end{ex}

\begin{defn}[$(G,\calH)$-CW-pair]\label{def:GHpair}
  Let $(G, \calH)$ be a group pair with~$\calH = (H_i)_{i \in I}$.
    A \emph{$(G,\calH)$-CW-pair} is a $G$-CW-pair~$(Y,B)$ together with
    a decomposition $$B=\coprod_{i\in I} G\times_{H_i} B_i \,,$$ where $B_i$ is
    an $H_i$-CW-complex.

    Let $\F$ be a family of subgroups of~$G$. 
    We say that $(Y,B)$ \emph{has isotropy in $\F$} if all
    isotropy groups of~$Y$ lie in~$\F$.

    The \emph{relative dimension}~$\dim(Y,B)\in \IN\cup\{\infty\}$ is the
    dimension of the relative $G$-CW-complex $(Y,B)$.

    A map of $(G,\calH)$-CW-pairs $f\colon (Y,B)\to (Z,C)$ is a $G$-map
    of pairs such that the restriction~$f|_B$ is of the form~$\coprod_{i\in I}
    G\times_{H_i} f_i$, where $f_i\colon B_i\to C_i$ is an $H_i$-map.
\end{defn}

If a~$(G,\calH)$-CW-pair~$(Y,B)$ as above has isotropy in~$\F$, then
the isotropy groups of the~$H_i$-CW-complex~$B_i$ lie in~$\F|_{H_i}$.

In the situation of Definition~\ref{def:GHpair}, replacing a
subgroup~$H_i$ by a conjugate~$gH_ig^{-1}$ with~$g \in G$ corresponds
to changing the reference point in the description of the induced
space~$G \times_{H_i} B_i$.  Thus, the
collection~$\calH$ can also be viewed as a collection of representatives
of conjugacy classes of subgroups of~$G$.

\begin{ex}[Universal covering pair]\label{ex:universal covering pair}
  Let $(X,A)$ be a CW-pair with fundamental group
  pair~$(G,\calH)$, where $A = \coprod_{i \in I} A_i$ and $\calH =
  (H_i)_{i \in I} = (\pi_1(A_i))_{i \in I}$ are as in
  Example~\ref{ex:fundamental group pair}.  Denote by $p\colon
  \widetilde{X}\to X$ the universal covering map.  Then there is a
  $G$-homeomorphism~$p^{-1}(A)\cong \coprod_{i\in I} G\times_{H_i}
  \widetilde{A_i}$, where $\widetilde{A_i}$ is the universal covering
  of $A_i$.  This shows that $(\widetilde{X},p^{-1}(A))$ is a
  $(G,\calH)$-CW-pair with isotropy in the trivial family~$\TR$.
\end{ex}

%%%%%%%%%%%%%%%%
\subsection{Classifying spaces of group pairs with respect to a family}

We now let families of subgroups and an additional collection of
subgroups interact:

\begin{defn}[Classifying space for group pairs]\label{defn:EFGH}
  Let $G$ be a group, $\calH$ be a collection of subgroups of $G$, and
  $\F$ be a family of subgroups of $G$. A \emph{classifying space for
    the group pair~$(G,\calH)$ with respect to~$\F$} is a
  $(G,\calH)$-CW-pair $(E,D)$ with the following universal property:
  \begin{itemize}
  \item The pair $(E,D)$ has isotropy in~$\F$;
  \item For each $(G,\calH)$-CW-pair~$(Y,B)$ with isotropy in~$\F$,
    there is up to $G$-homotopy exactly one $G$-map~$(Y,B)\to (E,D)$
    of $(G,\calH)$-CW-pairs.
  \end{itemize} 
  We usually denote such classifying spaces by~$\EFGH$ (even though
  they are only unique up to $G$-homotopy equivalence of pairs).
\end{defn}

Models for~$\EFGH$ can be constructed as mapping cylinders:

\begin{lem}[Existence of classifying spaces for group pairs]\label{lem:existence classifying pair}
	Let~$(G,\calH)$ be a group pair and let~$\F$ be family of subgroups 
	of~$G$. Then there exists a classifying space for the 
	group pair~$(G,\calH)$ with respect to~$\F$.
\end{lem}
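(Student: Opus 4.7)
The plan is to realise $\EFGH$ as an equivariant mapping cylinder. For each subgroup~$H_i$ appearing in $\calH=(H_i)_{i\in I}$, I first choose a classifying space $E_{\F|_{H_i}} H_i$ for the restricted family $\F|_{H_i}$, which exists by Theorem~\ref{thm:EFGex}. Setting $D_i\coloneqq E_{\F|_{H_i}} H_i$ and $D\coloneqq \coprod_{i\in I} G\times_{H_i} D_i$, I obtain a $G$-CW-complex whose isotropy groups, being $G$-conjugates of the isotropy groups of the~$D_i$, all lie in the conjugation-closed family~$\F$. I then fix a classifying space $E_\F G$ and invoke its universal property to obtain a $G$-map $f\colon D\to E_\F G$, unique up to $G$-homotopy.

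Next, I let~$E$ be the equivariant mapping cylinder of (a cellular representative of)~$f$, equipped with its canonical $G$-CW-structure; this comes with a $G$-cofibration $D\hookrightarrow E$ and a $G$-deformation retraction $E\to E_\F G$. In particular, all isotropy groups of~$E$ lie in~$\F$, and for every $H\in\F$ the fixed point set~$E^H$ is contractible, since $E$ and $E_\F G$ are $G$-homotopy equivalent. Together with the prescribed decomposition of~$D$, this turns $(E,D)$ into a $(G,\calH)$-CW-pair with isotropy in~$\F$.

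To verify the universal property, let $(Y,B)$ be a $(G,\calH)$-CW-pair with isotropy in~$\F$ and write $B=\coprod_{i\in I} G\times_{H_i} B_i$. Each~$B_i$ has isotropy in $\F|_{H_i}$, so the universal property of~$D_i$ yields an $H_i$-map $B_i\to D_i$, unique up to $H_i$-homotopy; inducing and summing produces a morphism $B\to D$ in the required induced form. Composing with $D\hookrightarrow E$ gives a $G$-map $B\to E$, which I extend cell by cell to a $G$-map $Y\to E$ via the equivariant Whitehead theorem, using the characterisation of Theorem~\ref{thm:EFGex}~\ref{item:fixed point sets} that $E^K$ is contractible for every relative isotropy group $K\in\F$ appearing in $(Y,B)$. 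Uniqueness up to $G$-homotopy of pairs follows analogously: given two such maps, induce $H_i$-homotopies between their restrictions to the~$B_i$ to obtain a $G$-homotopy of pairs on~$B$, then extend this over $Y\times[0,1]$ relative to $Y\times\{0,1\}\cup B\times[0,1]$ by the same equivariant obstruction argument.

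The main technical point is to keep all maps and homotopies on the boundary part~$B$ in the induced form $\coprod_{i\in I}G\times_{H_i}(\args)$, so that they are genuine morphisms of $(G,\calH)$-CW-pairs; this is precisely why one has to work $H_i$-equivariantly on each~$B_i$ first and only then induce up to~$G$, rather than attempting a direct extension at the level of $G$-CW-complexes.
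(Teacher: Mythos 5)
Your construction is exactly the paper's: take $D=\coprod_{i\in I}G\times_{H_i}E_{\F|_{H_i}}H_i$, map it to $E_\F G$ via the universal property, and use the mapping cylinder as the model for $\EFGH$. The proof is correct and follows the same route as the paper; you merely spell out the verification of the universal property (which the paper leaves as a remark about the universal properties of $E_{\F|_{H_i}}H_i$ and $\EFG$) in more detail.
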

\begin{proof}
	We write the collection~$\calH$ as~$(H_i)_{i\in I}$.
  For every~$i\in I$, let $E_{\F|_{H_i}}H_i$ be a classifying space
  for~$H_i$ with respect to the family~$\F|_{H_i}$.  The induced
  $G$-CW-complex~$G\times_{H_i}(E_{\F|_{H_i}}H_i)$ has isotropy
  in the family~$\F$ and  
  hence there exists a 
  ~$G$-map~$G\times_{H_i}(E_{\F|_{H_i}}H_i)\to \EFG$
  (that is unique up to $G$-homotopy).
  Then the mapping cylinder of the $G$-map $$\coprod_{i\in
    I}G\times_{H_i}(E_{\F|_{H_i}}H_i)\to \EFG$$ is a model
  for~$\EFGH$. This follows from the universal properties of the
  classifying spaces~$E_{\F|_{H_i}}H_i$ and~$\EFG$.
\end{proof}

%%%%%%%%%%%%%%%%%%%%%%%%%%%%%%%%%%%%%%%%%%%%%%%%%%%%%%%%%%%%%%%%%%
\section{Relative cohomological dimension and mapping degrees}

We discuss an application of classifying spaces for group pairs and
relative group cohomology to maps between manifolds.  We obtain
inheritance results for the freeness of fundamental groups of
manifolds in terms of mapping degrees.  The results of this section
are independent from the rest of the paper.

%%%%%%%%%%%%%%%%%%%%%%%%%%%%%%%%
\subsection{Relative group cohomology}

We recall the definition of relative cohomology of a group pair and a
characterisation of groups pairs with relative cohomological dimension
one.

\begin{defn}[Relative cohomology of group pairs]\label{defn:relative group cohomology}
  Let $(G,\calH)$ be a group pair
  and $R$ be a commutative ring. We define the \emph{relative cohomology}
  $H^*(G,\calH;V)$ with coefficients in an $RG$-module $V$ to be the
  $R$-module
  \[
  H^*(G,\calH;V)\coloneqq H^*_G(E_{\TR}(G,\calH);V) \,,
  \]
  where $E_{\TR}(G,\calH)$ is a classifying space for $(G,\calH)$ with
  respect to the trivial family.  
  Here
  the equivariant cohomology~$H^*_G(E_\TR(G,\calH);V)$ is by
  definition the cohomology~$H^*(BG,\coprod_{H\in \calH}BH;V)$ with
  twisted coefficients.
\end{defn}

\begin{defn}[Relative cohomological dimension]\label{defn:rel:cohom:dim}
  The \emph{relative cohomological dimension} $\cd_R(G,\calH)$ of the
  group pair $(G,\calH)$ over the ring $R$ is defined as follows:
  $$
  \cd_R(G,\calH) \coloneqq \sup\{n \in \N\mid H^n(G,\calH;V) \not \cong 0 \mbox{ for some $RG$-module }V\}.
  $$  
  For simplicity we denote~$\cd_\IZ(G,\calH)$ by~$\cd(G,\calH)$.
\end{defn}

To illustrate these definitions, we mention that
for $\calH= (H_i)_{i \in I}$ the long exact sequence for the pair
$E_{\TR}(G,\calH)$ takes the following form:
\[
	\cdots\to \prod_{i\in I}H^{n-1}(H_i;V)\to H^n(G,\calH;V)\to H^n(G;V)\to \prod_{i\in I}H^n(H_i;V)\to \cdots \,.
\]
This shows that $\cd_R(G,\calH)\le n$ if and only if for all
$RG$-modules $V$ the restriction map $H^k(G;V)\to \prod_{i\in
  I}H^k(H_i;V)$ is an isomorphism for $k>n$ and an epimorphism for
$k=n$.

\begin{rem}
  Let $(G,\calH)$ be a group pair with~$\calH = (H_i)_{i \in I}$.
  While our definition of~$H^*(G,\calH;V)$ is purely topological, one
  may also define it algebraically via derived functors. More
  precisely, consider the augmentation $RG$-map~$R[\coprod_{i\in
      I}G/H_i]\to R$ and let $\Delta$ denote its kernel. Then there
  exists a natural isomorphism
  \[
  H^*(G,\calH;V)\cong \Ext^{*-1}_{RG}(\Delta,V) \,.
  \]
  In this situation we have that
  $\cd_R(G,\calH)=\operatorname{pd}_{RG}(\Delta)+1$, where
  $\operatorname{pd}_{RG}$ denotes the projective dimension.

  Many of the usual cohomological tools for group cohomology
  have been developed for the relative case as
  well~\cite{Takasu59,Bieri-Eckmann78,Alonso91}.
  
  In the case of a single subgroup~$H$, the relation between~$H^*(G,H;V)$
  and the Bredon cohomology~$H^*_G(E_{\F\spann{H}}G;V)$ has recently
  been investigated~\cite{ANCM17,ANCMSS21}.
\end{rem}

By the work of Stallings~\cite{Stallings} and Swan~\cite{Swan} groups
of cohomological dimension one are precisely the free groups.  
The following is a generalisation to the relative setting.

\begin{thm}[{Group pairs of relative cohomological dimension one~\cite{Dicks80,Alonso91}}]
\label{thm:relative Stallings-Swan}
  Let $(G,\calH)$ be a group pair with~$\calH = (H_i)_{i \in I}$.
  Then the following are equivalent:
  \begin{enumerate}[label=\enum]
  	\item $\cd(G,\calH)=1$;
	\item There exists a free group~$F$
  such that~$G\cong F\ast \Asterisk_{i\in I}H_i$.
  \end{enumerate}
\end{thm}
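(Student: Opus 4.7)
The plan is to prove the two implications separately, using the topological description~$H^*(G,\calH;V)=H^*_G(E_\TR(G,\calH);V)$ together with its algebraic reformulation via~$\Ext^*_{\IZ G}(\Delta,V)$, where~$\Delta=\ker(\IZ[\coprod_{i\in I}G/H_i]\to\IZ)$. Under this identification, $\cd(G,\calH)=1$ is equivalent to~$\Delta$ being a non-zero projective~$\IZ G$-module.

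The direction~(ii)$\Rightarrow$(i) proceeds by constructing an explicit relatively~$1$-dimensional model for~$E_\TR(G,\calH)$. Given a decomposition~$G\cong F\ast\Asterisk_{i\in I}H_i$ with~$F$ free on a basis~$S$, I would build a CW-pair~$(X,A)$ as follows: take~$A\coloneqq\coprod_{i\in I}BH_i$; glue a tree of arcs joining the components of~$A$ to a common basepoint~$x_0\in X$; and wedge on one circle at~$x_0$ for every~$s\in S$. Van Kampen shows~$\pi_1(X,x_0)\cong G$ and that each inclusion~$BH_i\hookrightarrow X$ is~$\pi_1$-injective, realising the prescribed subgroup structure. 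Since~$X$ deformation retracts to a wedge of the aspherical spaces~$BH_i$ and finitely many circles,~$X$ is aspherical; hence Example~\ref{ex:universal covering pair} applied to the universal covering produces a $(G,\calH)$-CW-pair~$(\widetilde X,p^{-1}(A))$ with trivial isotropy and relative dimension~$1$, whose universal property identifies it as a model for~$E_\TR(G,\calH)$ by Lemma~\ref{lem:existence classifying pair}. Therefore $H^n(G,\calH;V)=0$ for all~$n\ge 2$, so~$\cd(G,\calH)\le 1$. Strict equality follows because a non-trivial free product decomposition forces~$\Delta\ne 0$ to be non-split, ruling out~$\cd(G,\calH)=0$.

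The direction~(i)$\Rightarrow$(ii) is the genuine content, the relative Stallings--Swan theorem, due to Dicks~\cite{Dicks80} and refined by Alonso~\cite{Alonso91}. The strategy is: projectivity of~$\Delta$ is converted, through a relative version of Stallings' theorem on ends of groups, into a $G$-action on a tree~$T$ with trivial edge stabilisers and with vertex stabilisers that are $G$-conjugate either to some~$H_i$ or to the trivial subgroup. Bass--Serre theory applied to~$G\actson T$ then assembles the desired decomposition~$G\cong F\ast\Asterisk_{i\in I}H_i$, the free factor~$F$ arising from the trivial-stabiliser vertex orbits.

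The main obstacle is the~(i)$\Rightarrow$(ii) direction: extracting an honest tree action from the algebraic projectivity hypothesis. This relative end-theoretic splitting, which replaces the classical argument of Stallings, is the delicate geometric input. The reverse implication~(ii)$\Rightarrow$(i) is, in comparison, a direct model-building exercise.
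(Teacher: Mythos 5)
The paper does not prove this theorem: it is quoted verbatim from the literature (\cite{Dicks80,Alonso91}), so there is no in-paper argument to compare against. Your proposal is consistent with that treatment — the implication (ii)$\Rightarrow$(i) via an explicit relatively one-dimensional model for~$E_\TR(G,\calH)$ is sound in outline (the universal property of~$(\widetilde X,p^{-1}(A))$ follows since it is $G$-homotopy equivalent to the mapping-cylinder model of Lemma~\ref{lem:existence classifying pair}), while for (i)$\Rightarrow$(ii) you only restate the known ends-of-groups/Bass--Serre strategy and defer, exactly as the paper does, to Dicks and Alonso; so the hard direction remains a citation rather than a proof, which is acceptable here but should be labelled as such.
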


%%%%%%%%%%%%%%%%%%%%%%%%%%%%%
\subsection{Mapping degrees and monotonicity}

For maps between manifolds with $\pi_1$-injective boundary components,
we prove monotonicity results on the cohomological dimension of the
fundamental group pairs.

First, we introduce some notation.  Let $(X,A)$ be a CW-pair with
$X$ path-connected.
Let~$A=\coprod_{i\in I}A_i$ be a decomposition into connected components
and assume that each~$A_i$ is $\pi_1$-injective in~$X$. We
denote by $(\pi_1(X),\pi_1(A))$ a fundamental group pair of~$(X,A)$
(Example~\ref{ex:fundamental group pair}) and by~$p\colon
\widetilde{X}\to X$ the universal covering map.  
Let $H^*(X,A;V)$ be the cohomology with twisted coefficients in a
$\pi_1(X)$-module~$V$. Then the classifying map~$\varphi_{(X,A)}\colon
(\widetilde{X},p^{-1}(A))\to E_{\TR}(\pi_1(X),\pi_1(A))$ induces a map
on cohomology:
\[
	H^*(\varphi_{(X,A)})\colon H^*(\pi_1(X),\pi_1(A);V)\to H^*(X,A;V) \,.
\]

\begin{lem}\label{lemma:varphi:2:injective}
  Let $(X,A)$ be a CW-pair with fundamental group
  pair~$(\pi_1(X),\pi_1(A))$ as above.  The map $H^2(\varphi_{(X,A)}) \colon
  H^2(\pi_1(X),\pi_1(A);V)\to H^2(X,A;V)$ is injective for every
  $\pi_1(X)$-module~$V$.
\end{lem}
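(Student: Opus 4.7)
The plan is to compare the long exact sequences of the group pair $(G,\calH)$ and of the topological pair $(X,A)$ via the classifying map $\varphi_{(X,A)}$, and then to apply a weak four-lemma, using as input a standard fact about classifying maps in the absolute case.

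The absolute input I would establish first is the following. For any path-connected CW-complex~$Y$ with fundamental group~$K$ and any $K$-module~$W$, the classifying map $Y\to BK$ induces an isomorphism on~$H^1(-;W)$ and an injection on~$H^2(-;W)$. This follows from the Cartan--Leray spectral sequence $E_2^{p,q}=H^p(K;H^q(\widetilde{Y};W))\Rightarrow H^{p+q}(Y;W)$: since $\widetilde{Y}$ is simply connected, $H^0(\widetilde{Y};W)=W$ and $H^1(\widetilde{Y};W)=0$, so $E_2^{p,1}=0$ and no nontrivial differential enters or leaves the entry $E_2^{2,0}=H^2(K;W)$. Consequently $E_\infty^{2,0}=H^2(K;W)$ is the lowest filtration piece of~$H^2(Y;W)$ and the edge homomorphism is injective; the analogous statement in degree one is then immediate.

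Applying this to~$(G,X,V)$ and, for each~$i\in I$, to~$(H_i,A_i,V|_{H_i})$ (using the $\pi_1$-injectivity of each~$A_i$ to identify~$\pi_1(A_i)$ with the subgroup~$H_i$ of~$G$ and to restrict the coefficient system consistently), I would assemble the commutative ladder
\[
\begin{tikzcd}
H^1(G;V)\ar{r}\ar{d}{\cong} & \prod_i H^1(H_i;V)\ar{r}\ar{d}{\cong} & H^2(G,\calH;V)\ar{r}\ar{d}{H^2(\varphi_{(X,A)})} & H^2(G;V)\ar{d} \\
H^1(X;V)\ar{r} & H^1(A;V)\ar{r} & H^2(X,A;V)\ar{r} & H^2(X;V)
\end{tikzcd}
\]
whose rows are the long exact sequences of the respective pairs (the top one being precisely the sequence displayed just after Definition~\ref{defn:rel:cohom:dim}, the bottom one the ordinary twisted long exact sequence of $(X,A)$) and whose fourth vertical arrow is injective by the absolute input above. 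Commutativity is naturality of the long exact sequence of a pair, applied on $G$-quotients to the $G$-map of $(G,\calH)$-CW-pairs $\varphi_{(X,A)}\colon(\widetilde{X},p^{-1}(A))\to E_\TR(G,\calH)$.

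The argument then concludes with a weak four-lemma: given $\alpha\in H^2(G,\calH;V)$ mapping to~$0$ in~$H^2(X,A;V)$, its image in~$H^2(G;V)$ vanishes by injectivity of the fourth arrow, so by exactness $\alpha$ comes from some~$\beta\in\prod_i H^1(H_i;V)$; commutativity together with surjectivity of the first vertical arrow and injectivity of the second forces~$\beta$ to lie in the image of $H^1(G;V)\to\prod_i H^1(H_i;V)$, whence $\alpha=0$ by exactness. The main technical point to be careful about is keeping the twisted coefficient system and the basepoint/conjugation data used to identify each~$\pi_1(A_i)$ with~$H_i$ consistent on both sides of the ladder; the rest is a direct assembly of classical spectral sequence arguments.
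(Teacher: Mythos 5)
Your proposal is correct and follows essentially the same route as the paper: the four lemma for monomorphisms applied to the ladder of long exact sequences of the group pair and the topological pair, with the key input that the classifying map induces an isomorphism on $H^1$ and an injection on $H^2$ in the absolute case. The only (immaterial) difference is that you justify this absolute input via the Cartan--Leray spectral sequence, whereas the paper observes that $E_\TR(\pi_1(X))$ and $E_\TR(\pi_1(A_i))$ are obtained from $\widetilde{X}$ and $\widetilde{A_i}$ by attaching cells of dimension at least~$3$.
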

\begin{proof}
We argue via the four lemma for monomorphisms. We consider the following commutative diagram, where the coefficient module is omitted:
\[
\xymatrix{
H^1(\pi_1(X)) \ar[r] \ar[d] & \prod_{i\in I}H^1(\pi_1(A_i)) \ar[r] \ar[d] & H^2(\pi_1(X), \pi_1(A))  \ar[r] \ar[d]^-{H^2(\varphi_{(X,A)})}  & H^2(\pi_1(X)) \ar[d] \\
H^1(X) \ar[r] & H^1(A) \ar[r] & H^2(X, A) \ar[r] & H^2(X) \,.
}
\]
Here all vertical maps are induced by the respective classifying maps.
Since a model for~$E_\TR(\pi_1(X))$ [resp.~for $E_\TR(\pi_1(A_i))$]
can be built from~$\widetilde{X}$ [resp.~from~$\widetilde{A_i}$] 
by attaching cells of dimension greater than or equal to~$3$,
the first and second vertical arrows
are isomorphisms, while the last vertical arrow is injective. 
Applying the four lemma for monomorphisms, we conclude that $H^2(\varphi_{(X,A)})$ is injective.
\end{proof}

We use the convention (Section~\ref{subsec:convs}) to say that a
manifold~$M$ has \emph{$\pi_1$-injective boundary}~$\partial M$ if every
component of~$\partial M$ is $\pi_1$-injective in $M$.

\begin{thm}[Mapping degree and relative cohomological dimension one]\label{thm:degree cd one}
  Let $f\colon (M,\partial M)\to (N,\partial N)$ be a map between
  oriented compact connected manifolds of the same dimension with
  $\pi_1$-injective boundary.  Then the following hold:
  \begin{enumerate}[label=\enum]
  \item If $\deg(f) =\pm 1$ and $\cd(\pi_1(M),\pi_1(\partial M))\le
    1$, then $\cd(\pi_1(N),\pi_1(\partial N))\le 1$;
  \item If $\deg(f)\neq 0$ and $\cd_\IQ(\pi_1(M),\pi_1(\partial M))\le
    1$, then $\cd_\IQ(\pi_1(N),\pi_1(\partial N))\le 1$.
  \end{enumerate}
\end{thm}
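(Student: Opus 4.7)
I will deduce $\cd(\pi_1(N),\pi_1(\partial N))\le 1$ (and in~(ii) the~$\IQ$-version) from the vanishing of the single group $H^2(\pi_1(N),\pi_1(\partial N);V)$ for every $R\pi_1(N)$-module~$V$, with $R=\IZ$ in~(i) and $R=\IQ$ in~(ii). This reduction follows from the $\Ext$-description of relative group cohomology mentioned in the remark after Definition~\ref{defn:rel:cohom:dim}: one has $\cd_R(G,\calH)=\operatorname{pd}_{RG}(\Delta)+1$, and a standard dimension-shifting argument shows that projective dimension~$\le 0$ is detected by vanishing of $\Ext^1_{RG}(\Delta,V)$ for all~$V$, that is, by vanishing of~$H^2(G,\calH;V)$.

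To establish the target vanishing, I will use the naturality of the classifying map with respect to the induced map of group pairs $f_*\colon(\pi_1(M),\pi_1(\partial M))\to(\pi_1(N),\pi_1(\partial N))$. This produces the commutative square
\[
\begin{tikzcd}[column sep=large]
H^2(\pi_1(N),\pi_1(\partial N);V)\ar{r}{H^2(\varphi_{(N,\partial N)})}\ar{d} & H^2(N,\partial N;V)\ar{d}{H^2(f)} \\
H^2(\pi_1(M),\pi_1(\partial M);f^*V)\ar{r}{H^2(\varphi_{(M,\partial M)})} & H^2(M,\partial M;f^*V)\,.
\end{tikzcd}
\]
By hypothesis, the lower-left corner is zero, so $H^2(f)\circ H^2(\varphi_{(N,\partial N)})=0$.

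The next step is to argue that $H^2(f)$ is injective. Using Poincar\'e--Lefschetz duality with local coefficients together with the projection formula
\[
f_*\bigl(f^*(y)\cap[M,\partial M]\bigr)=y\cap f_*[M,\partial M]=\deg(f)\cdot\bigl(y\cap[N,\partial N]\bigr)\,,
\]
one constructs a transfer $f_!\colon H^2(M,\partial M;f^*V)\to H^2(N,\partial N;V)$ satisfying $f_!\circ H^2(f)=\deg(f)\cdot\id$. In case~(i), where $\deg(f)=\pm 1$, this forces $H^2(f)$ to be injective for every $\IZ\pi_1(N)$-module; in case~(ii), the same conclusion holds because $\deg(f)$ acts invertibly on the $\IQ$-vector space $H^2(N,\partial N;V)$.

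Finally, injectivity of $H^2(f)$ combined with the previous display yields $H^2(\varphi_{(N,\partial N)})=0$, and Lemma~\ref{lemma:varphi:2:injective} (injectivity of $H^2(\varphi_{(N,\partial N)})$) then forces $H^2(\pi_1(N),\pi_1(\partial N);V)=0$, as required. The main technical hurdle I anticipate is the careful setup of Poincar\'e--Lefschetz duality, the cap-product, and the projection formula with the twisted coefficient systems $V$ and $f^*V$; once those ingredients are in place, the remainder is a short diagram chase.
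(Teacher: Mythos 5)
Your proposal is correct and follows essentially the same approach as the paper's proof: reduce to vanishing of $H^2$ via dimension shifting, use the injectivity of $H^2(\varphi_{(N,\partial N)})$ from Lemma~\ref{lemma:varphi:2:injective}, build the Poincar\'e--Lefschetz transfer $f_!$ with $f_!\circ H^2(f)=\deg(f)\cdot\id$, and chase the naturality square for the classifying maps. The only cosmetic difference is that the paper argues by contraposition rather than directly.
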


\begin{proof}
  We proceed by contraposition.  Let $R=\IZ$ [resp.~$R=\IQ$] and
  suppose that $\cd_R(\pi_1(N),\pi_1(\partial N))>1$. Then by a
  dimension shifting argument, there exists an $R\pi_1(N)$-module $V$
  such that $H^2(\pi_1(N),\pi_1(\partial N);V)$ is non-trivial. We
  denote by $f^{-1}V$ the $R\pi_1(M)$-module that is obtained from $V$
  by restriction along $\pi_1(f)$. Consider the following commutative
  diagram:
  \[\begin{tikzcd}
  H^2(N,\partial N;V)\ar{rr}{H^2(f)} && H^2(M,\partial M;f^{-1}V) \\
  H^2(\pi_1(N),\pi_1(\partial N);V)\ar{u}{H^2(\varphi_{(N,\partial N)})}\ar{rr}{H^2(\pi_1(f))} && H^2(\pi_1(M),\pi_1(\partial M);f^{-1}V)\ar{u}{H^2(\varphi_{(M,\partial M)})} \,.
  \end{tikzcd}
  \]
  Here the vertical maps, which are induced by the respective
  classifying maps, are injective in degree~$2$
  (Lemma~\ref{lemma:varphi:2:injective}).  By Poincar\'e--Lefschetz
  duality with twisted coefficients, there exists an Umkehr map
  \[
  f_!\colon H^2(M,\partial M;f^{-1}V)\to H^2(N,\partial N;V) 
  \]
  such that the composition $f_!\circ H^2(f) \colon H^2(N,\partial
  N;V)\to H^2(N,\partial N;V)$ is given by multiplication
  with~$\deg(f)$.  Hence the map~$H^2(f)$ is injective in each of the following
  cases:
  \begin{enumerate}[label=\enum]
  \item\label{item:rel cd1} If $\deg(f)= \pm
    1$ and $R=\IZ$;
    
  \item\label{item:rel cd2} If $\deg(f)\neq 0$ and $R=\IQ$.
  \end{enumerate}  
  This shows that in the situations~\ref{item:rel cd1} and~\ref{item:rel cd2} the composition
  $$H^2(f)\circ H^2(\varphi_{(N,\partial N)})=H^2(\varphi_{(M,\partial M)})\circ H^2(\pi_1(f))$$ is
  injective, whence $H^2(\pi_1(f))$ is injective.  Therefore the
  relative cohomology group $H^2(\pi_1(M),\pi_1(\partial M);f^{-1}V)$ is
  non-trivial and we have $\cd_R(\pi_1(M),\pi_1(\partial M)) > 1$.
\end{proof}

\begin{rem}
If the universal coverings of $N$ and $\partial N$ are
$k$-connected, then the map $H^{k+1}(\varphi_{(N,\partial N)})$ is injective and
hence similar monotonicity results hold for cohomological dimension
at most~$k$.
\end{rem}

Theorem~\ref{thm:degree cd one} readily implies the following:

\begin{cor}\label{cor:degree one relative}
  Let $f\colon (M,\partial M)\to (N,\partial N)$ be a map between
  oriented compact connected manifolds of the same dimension with
  $\pi_1$-injective boundary. Suppose that~$\partial M=\coprod_{i=1}^m M_i$
  and $\partial N=\coprod_{i=1}^n N_i$ are decompositions into
  connected components.  If $\deg(f)=\pm 1$ and there exists a free
  group~$F_M$ such that $$\pi_1(M)\cong F_M\ast \Asterisk_{i=1}^m
  \pi_1(M_i)\,,$$ then there exists a free group~$F_N$ such
  that~$\pi_1(N)\cong F_N\ast \Asterisk_{i=1}^n \pi_1(N_i)$.
\end{cor}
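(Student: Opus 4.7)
The plan is to chain together the two main results of this subsection into a short three-step argument: translate the hypothesis on~$\pi_1(M)$ into a bound on relative cohomological dimension via Theorem~\ref{thm:relative Stallings-Swan}, transport that bound across~$f$ using Theorem~\ref{thm:degree cd one}, and translate back via Theorem~\ref{thm:relative Stallings-Swan} again to recover a free product decomposition on the target side.

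Concretely, I would first apply Theorem~\ref{thm:relative Stallings-Swan} to the group pair~$(\pi_1(M),\pi_1(\partial M))$: the given decomposition~$\pi_1(M)\cong F_M\ast \Asterisk_{i=1}^m \pi_1(M_i)$ yields~$\cd(\pi_1(M),\pi_1(\partial M))\le 1$. Next, the hypothesis~$\deg(f)=\pm 1$ together with the $\pi_1$-injectivity of the boundary components lets me invoke Theorem~\ref{thm:degree cd one}~(i), which gives~$\cd(\pi_1(N),\pi_1(\partial N))\le 1$. A final application of Theorem~\ref{thm:relative Stallings-Swan} to the target group pair~$(\pi_1(N),\pi_1(\partial N))$ produces a free group~$F_N$ such that~$\pi_1(N)\cong F_N\ast \Asterisk_{i=1}^n \pi_1(N_i)$.

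There is essentially no hard step here: all the real work (Poincar\'e--Lefschetz duality with twisted coefficients, the four lemma argument, and the Stallings--Swan-type classification) has already been absorbed into the two preceding theorems. The only minor point requiring attention is the degenerate case~$\cd=0$, which is not explicitly covered by the statement of Theorem~\ref{thm:relative Stallings-Swan}; here one either allows~$F_N$ to be the trivial group and reads the equivalence as covering~$\cd\le 1$, or one checks separately that~$\cd(\pi_1(M),\pi_1(\partial M))=0$ forces~$F_M$ trivial and produces an analogous vanishing on the target via the long exact sequence of the group pair. Once this bookkeeping is settled, the corollary follows by the three-line chain above.
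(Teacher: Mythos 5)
Your proof is correct and is exactly the argument the paper gives: Theorem~\ref{thm:relative Stallings-Swan} to pass to $\cd\le 1$, Theorem~\ref{thm:degree cd one}~(i) to transport it, and Theorem~\ref{thm:relative Stallings-Swan} again to return to a free product decomposition. Your remark on the degenerate case $\cd=0$ is a reasonable piece of bookkeeping that the paper leaves implicit.
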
        
\begin{proof}
  This follows from Theorem~\ref{thm:degree cd one}~(i) and the
  group-theoretic characterisation of relative cohomological dimension
  one (Theorem~\ref{thm:relative Stallings-Swan}).
\end{proof}

Examples of manifolds satisfying the assumptions of
Corollary~\ref{cor:degree one relative} are the following:

\begin{ex}
  Let $F_k$ be a free group of rank~$k$, and let $\calH=(H_1, \dots,
  H_m)$ be a finite collection of finitely presented groups. Then for
  every~$n\ge 7$, there exists a compact connected $n$-dimensional
  manifold~$(M,\partial M)$ with fundamental group pair~$(G,\calH)$
  such that~$G\cong F_k\ast \Asterisk_{i=1}^m H_i$. Indeed, let $L_i$
  be an oriented closed connected $4$-manifold with~$\pi_1(L_i) \cong
  H_i$ and consider
  \[
  (M,\partial M)\coloneqq(\#_{i=1}^k S^1\times S^{n-1}) \mathbin{\#} (\#_{i=1}^m L_i\times D^{n-4}) \,.
  \]
  Then $\partial M \cong \coprod_{i=1}^m L_i\times S^{n-5}$ and
  $\pi_1(M) \cong F_k\ast \Asterisk_{i=1}^m \pi_1(L_i)$.
\end{ex}

In the case of closed manifolds, Corollary~\ref{cor:degree one
  relative} provides a simplified proof of a result by Dranishnikov
and Rudyak~\cite[Theorem 5.2]{Dranishnikov-Rudyak09}, without making
use of the Berstein class.  We also obtain the following analogue for
maps of non-zero degree:

\begin{cor}\label{cor:finmono}
  Let $f\colon M\to N$ be a map between oriented closed connected
  manifolds of the same dimension with~$\deg(f)\neq 0$. If $\pi_1(M)$
  is the fundamental group of a graph of finite groups, then so
  is~$\pi_1(N)$.  In particular, if $\pi_1(M)$ is virtually free, then
  $\pi_1(N)$ is virtually free.
\end{cor}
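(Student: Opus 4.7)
The plan is to reduce to Theorem~\ref{thm:degree cd one}~(ii), specialised to the case of empty boundary, which then reads: if $\deg(f)\ne 0$ and $\cd_\IQ(\pi_1(M))\le 1$, then $\cd_\IQ(\pi_1(N))\le 1$. Since $M$ and $N$ are closed, $\pi_1(M)$ and $\pi_1(N)$ are finitely generated, so the corollary will follow once we translate between the condition ``fundamental group of a graph of finite groups'' and the bound $\cd_\IQ\le 1$ in both directions.

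First I would show that if $G$ is finitely generated and is the fundamental group of a graph of finite groups, then $\cd_\IQ(G)\le 1$. By the Karrass--Pietrowski--Solitar theorem such a $G$ is virtually free, and so contains a free subgroup $F$ of finite index. Since $F$ is torsion-free of finite index in $G$, Serre's theorem yields $\cd_\IQ(G)=\cd_\IQ(F)\le 1$. Applied to $\pi_1(M)$, this verifies the hypothesis of Theorem~\ref{thm:degree cd one}~(ii) and produces $\cd_\IQ(\pi_1(N))\le 1$.

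Next I would invoke Dunwoody's theorem in the opposite direction: a finitely generated group with $\cd_\IQ\le 1$ is virtually free, and therefore the fundamental group of a finite graph of finite groups. Applied to~$\pi_1(N)$, this establishes the first assertion of the corollary. The ``in particular'' claim is then immediate, since for finitely generated groups the classes of virtually free groups and of fundamental groups of finite graphs of finite groups coincide; alternatively one can apply the same chain of implications starting from ``$\pi_1(M)$ virtually free''.

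The main obstacle is conceptual rather than computational: the implication ``$\cd_\IQ(G)\le 1 \Rightarrow G$ virtually free'' for finitely generated $G$ is a genuine structure result of Dunwoody, which must be cited as a black box. The forward direction through Serre and Karrass--Pietrowski--Solitar is essentially bookkeeping, and the invocation of Theorem~\ref{thm:degree cd one}~(ii) is automatic in the closed case.
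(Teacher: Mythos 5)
Your proposal is correct and follows essentially the same route as the paper: specialise Theorem~\ref{thm:degree cd one}~(ii) to the closed case and translate between ``fundamental group of a graph of finite groups'' and $\cd_\IQ\le 1$ via Dunwoody's characterisation, using finite generation of fundamental groups of closed manifolds. The only cosmetic difference is that for the forward direction you route through Karrass--Pietrowski--Solitar and the finite-index invariance of $\cd_\IQ$ (where the relevant point is that the index is invertible in $\IQ$, not torsion-freeness per se), whereas the paper simply cites the ``if'' direction of Dunwoody's theorem as well.
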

\begin{proof}
  This follows from Theorem~\ref{thm:degree cd one}~(ii) and
  Dunwoody's characterisation~\cite{Dunwoody79} of groups of
  cohomological dimension one over arbitrary rings. 
  As fundamental groups of closed manifolds are finitely
  generated, this characterisation can also be expressed
  in terms of virtual freeness~\cite[Corollary~1.2]{Dunwoody79}.
\end{proof}

We are not aware of a relative version of Dunwoody's result that would
characterise group pairs of relative cohomological dimension one over
arbitrary rings. The results in this section motivate the following:

\begin{question}\label{q:Ccat1}
  For which other classes~$\calC$ of groups does the following hold?
  Whenever $f \colon M \to N$ is a map between oriented closed
  connected manifolds of the same dimension with~$\deg(f) \neq 0$ [or~$\deg(f)=1$] and
  $\pi_1(M)$ is the fundamental group of a graph of groups
  from~$\calC$, also $\pi_1(N)$ must be the fundamental group of a
  graph of groups from~$\calC$. 
\end{question}

Positive answers to Question~\ref{q:Ccat1} lead to corresponding
monotonicity results for the generalised LS-category~$\leq 2$
associated with the class~$\calC$, provided that $\calC$ is closed
under isomorphisms, subgroups, and quotients~\cite[Corollary~5.4 and
  the following paragraph]{CLM20}.

%%%%%%%%%%%%%%%%%%%%%%%%%%%%%%%%%%%%%%%%%%%%%%%%%%%%%%%%%%%%%%%%%%%
\section{Relative open covers and equivariant nerve pairs}

In this section, we study equivariant nerves of open covers in a
relative setting.  Given an open cover of a space, the nerve of the
cover gives an approximation of the space. Considering actions on
spaces and compatible covers leads to equivariant nerves, studied by
L\"oh and Sauer~\cite{Loeh-Sauer19} for universal covering spaces with
the action by the fundamental group. We adapt this approach to pairs
of spaces.

%%%%%%%%%%%%%%%
\subsection{Open covers and nerve pairs}

We fix some notation and terminology on open covers and their nerves. 

Let $Y$ be a space and $\calV$ be a cover of~$Y$ by path-connected
open subsets. We regard $\calV$ as a set of subsets of~$Y$ (and not
as a collection of subsets).

\begin{defn}[$\F$-Cover]
  Let $\F$ be a family of subgroups of~$\pi_1(Y)$. We say that~$\calV$
  is an \emph{$\F$-cover} of~$Y$ if $\im(\pi_1(V,x)\to \pi_1(Y,x))\in
  \F$ for all~$V\in \calV$ and all~$x \in V$.
\end{defn}

\begin{defn}[Convex cover]
  The cover~$\calV$ of~$Y$ is said to be \emph{convex} if every
  intersection of finitely many elements of~$\calV$ is path-connected
  or empty.
\end{defn}

The \emph{multiplicity~$\mult(\calV)$} of~$\calV$ is defined as
follows:
$$
\mult(\calV) \coloneqq \sup\Bigl\{n \in \N \Bigm| \bigcap_{i = 1}^n V_i \neq \emptyset \mbox{ for some pairwise different } V_1, \dots, V_n \in \calV\Bigr\} \,.
$$

The \emph{nerve}~$N(\calV)$ of~$\calV$ is the (abstract) simplicial
complex with vertex set~$\calV$; pairwise different~$V_0, \dots, V_n
\in \calV$ span an $n$-simplex in~$N(\calV)$ if~$V_0 \cap \dots \cap V_n
\neq \emptyset$. By definition, $\dim(N(\calV))=\mult(\calV)-1$.

Let $|N(\calV)|$ be the geometric realisation of the nerve~$N(\calV)$.  Given a
partition of unity~$(\psi_V)_{V\in\calV}$ on~$Y$ subordinate
to~$\calV$, there is an associated \emph{nerve map}
\begin{equation}\label{eqn:nerve map}
  \mu\colon Y\to |N(\calV)|\,,
  \quad y\mapsto\mu(y)\coloneqq\sum_{V\in\calV}\psi_V(y)\cdot V \,.
\end{equation} 
The nerve map is unique up to homotopy, since different
choices of partitions of unity lead to homotopic nerve maps.

\begin{defn}[Relative multiplicity]\label{def:relative:mult}
  For a subspace~$B$ of~$Y$, we define the \emph{relative
    multiplicity}~$\mult_B(\calV)$ of~$\calV$ (with respect to~$B$) as
  follows:
  \begin{align*}
    \mult_B(\calV) \coloneqq \sup\Bigl\{n \in \N \Bigm| 
    &\bigcap_{i = 1}^n V_i\neq \emptyset \mbox{ and } B \cap \Bigl(\bigcap_{i = 1}^n V_i\Bigr)=\emptyset \\
    &\mbox{for some pairwise different } V_1, \ldots, V_n \in \calV\Bigr\} \,.
  \end{align*}
\end{defn}

\begin{defn}[Nerve pair]\label{defn:nerve pair}
  For a subspace~$B$ of~$Y$, we denote by~$N_B(\calV)$ the simplicial
  subcomplex of~$N(\calV)$ with vertex set $\calV_B\coloneqq\{V\in
  \calV\mid V\cap B\neq \emptyset\}$, and pairwise
  different~$V_0,\dots, V_n\in \calV_B$ span a simplex in~$N_B(\calV)$
  if $V_0\cap \dots\cap V_n\cap B\neq \emptyset$. By
  construction we have $N(\calV)=N_Y(\calV)$.
  
  The nerve map~$\mu\colon Y\to |N(\calV)|$ induces a map of pairs
  \[
  \mu\colon (Y,B)\to (|N(\calV)|,|N_B(\calV)|) \,.
  \]

  The \emph{relative dimension}~$\dim(N(\calV),N_B(\calV))$ is the
  dimension of the relative simplicial
  complex~$(N(\calV),N_B(\calV))$.  By definition,
  $\dim(N(\calV),N_B(\calV))=\mult_B(\calV)-1$.
\end{defn}

%%%%%%%%%%%%%%%
\subsection{Equivariant nerve pairs}

We now consider open covers that are compatible with a group action,
giving rise to an action on their nerve.

\begin{defn}[Invariant cover and partition of unity]\label{defn:covers}
  Let $Y$ be a $G$-CW-complex and $\calV$ be a cover of~$Y$ by
  path-connected open subsets. We say that $\calV$ is
  \emph{$G$-invariant} if for all~$g\in G$ and $V\in \calV$, we have
  $g\cdot V\in \calV$.  For~$V\in \calV$, we write
  \[
  \Stab_G(V)\coloneqq \{g\in G\ |\ g\cdot V=V\} \,.
  \]
  For a family of subgroups~$\F$ of~$G$, we say that the cover~$\calV$
  \emph{has isotropy in $\F$} if for all~$V\in \calV$, we have
  $\Stab_G(V)\in \F$.
	
  A partition of unity~$(\psi_V)_{V\in\calV}$ on~$Y$ subordinate
  to a $G$-invariant cover~$\calV$ is said to be \emph{$G$-invariant} if for all $g\in G$ and
  $y\in Y$, we have
  \[
  \psi_V(y)=\psi_{g\cdot V}(g\cdot y) \,.
  \]
\end{defn}

The key examples will come from covers of a space~$X$ giving rise
to~$\pi_1(X)$-invariant covers of the universal covering
space~$\widetilde{X}$ (Example~\ref{ex:covers}).

We recall basic properties of equivariant nerves~\cite[Lemmas~4.8
  and~4.11]{Loeh-Sauer19} and their proofs for completeness.

\begin{lem}[Equivariant nerve]\label{lem:nerve}
  Let $Y$ be a $G$-CW-complex and $\calV$ be a $G$-invariant cover
  of~$Y$.  Then the following hold:
  \begin{enumerate}[label=\enum]
  \item\label{item:nerve G-CW} Let~$B$ be an $H$-invariant subcomplex of~$Y$ for a subgroup~$H$ of~$G$. Then $N_B(\calV)$ is an $H$-simplicial complex
    and its geometric realisation~$|N_B(\calV)|$
    is an $H$-CW-complex. In particular, $N(\calV)$
    is a $G$-simplicial complex;
  \item\label{item:moving sets} 
    Suppose that $g\cdot V\cap V\neq \emptyset$ implies~$g\cdot V=V$
    for all~$g\in G,V\in \calV$.  Let $\F$ be an intersection-closed
    family of subgroups of~$G$.  If the cover~$\calV$ has isotropy
    in~$\F$, then the $G$-CW-complex~$|N(\calV)|$ has isotropy in~$\F$;
  \item\label{item:G-nerve map} Let $(\psi_V)_{V\in\calV}$ be a $G$-invariant partition of
    unity on~$Y$ subordinate to~$\calV$. Then the induced nerve
    map~$\mu\colon Y\to |N(\calV)|$ is $G$-equivariant.
  \end{enumerate}
\end{lem}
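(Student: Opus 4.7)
The plan is to dispatch the three assertions separately. For part~\ref{item:nerve G-CW}, I would first note that $G$ acts on the vertex set~$\calV$ of~$N(\calV)$ by the $G$-invariance of~$\calV$, and that the simplex condition $V_0 \cap \dots \cap V_n \neq \emptyset$ is manifestly invariant under the $G$-action, since $g \cdot V_0 \cap \dots \cap g \cdot V_n = g \cdot (V_0 \cap \dots \cap V_n)$. Therefore~$N(\calV)$ is a $G$-simplicial complex. For the subcomplex~$N_B(\calV)$, the $H$-invariance of~$B$ yields $h \cdot V \cap B = h \cdot (V \cap B)$ for all~$h \in H$ and $V \in \calV$, so~$H$ preserves both the vertex set~$\calV_B$ and the simplex-defining condition $V_0 \cap \dots \cap V_n \cap B \neq \emptyset$; hence $N_B(\calV)$ is an $H$-simplicial complex. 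The $H$-CW-structure on~$|N_B(\calV)|$ is then forced by the standing convention from Example~\ref{ex:from:sc:to:cw}.

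For part~\ref{item:moving sets}, the crux is to compute the setwise stabiliser of a simplex of~$N(\calV)$. Let $\sigma = \{V_0, \dots, V_n\}$ be such a simplex, so that $V_0 \cap \dots \cap V_n \neq \emptyset$. If $g \in \Stab_G(\sigma)$, then $g \cdot V_i = V_{\pi(i)}$ for some permutation~$\pi$, and
\[
g \cdot V_i \cap V_i = V_{\pi(i)} \cap V_i \supseteq V_0 \cap \dots \cap V_n \neq \emptyset \,.
\]
The small-action hypothesis then forces $g \cdot V_i = V_i$ for every~$i$, so that $\Stab_G(\sigma) = \bigcap_{i=0}^n \Stab_G(V_i)$, which lies in~$\F$ by intersection-closedness and the assumption that each~$\Stab_G(V_i)\in \F$. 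Passing to the barycentric subdivision (which defines the $G$-CW-structure on~$|N(\calV)|$), the isotropy group of a point in the open cell indexed by a chain $\sigma_0 \subsetneq \dots \subsetneq \sigma_k$ is a further intersection of such setwise stabilisers, hence again in~$\F$.

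Part~\ref{item:G-nerve map} is a direct calculation. The $G$-invariance of the partition of unity, $\psi_V(y) = \psi_{g \cdot V}(g \cdot y)$, rewrites (replacing~$g$ by~$g^{-1}$ and~$y$ by~$g\cdot y$) as $\psi_V(g \cdot y) = \psi_{g^{-1} \cdot V}(y)$. Substituting into the definition~\eqref{eqn:nerve map} of the nerve map and reindexing the sum by~$W = g^{-1} \cdot V$ yields
\[
\mu(g \cdot y) = \sum_{V \in \calV} \psi_{g^{-1} \cdot V}(y) \cdot V = \sum_{W \in \calV} \psi_W(y) \cdot (g \cdot W) = g \cdot \mu(y) \,.
\]

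The main obstacle is part~\ref{item:moving sets}: without the small-action hypothesis, the setwise stabiliser of a simplex could strictly exceed $\bigcap_i \Stab_G(V_i)$ by including elements that permute the~$V_i$ nontrivially, and intersection-closedness of~$\F$ alone would not suffice to control such a permutation subgroup. The hypothesis is exactly what rigidifies the $G$-action on each simplex and reduces the cellular isotropy on~$|N(\calV)|$ to intersections of the isotropy groups of the cover.
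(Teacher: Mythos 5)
Your proposal is correct and follows essentially the same route as the paper: part (i) by checking that the group action preserves the vertex set and simplex condition of $N_B(\calV)$, part (ii) by using the small-action hypothesis to identify the setwise stabiliser of a simplex with $\bigcap_i \Stab_G(V_i)$ and then passing to the barycentric subdivision, and part (iii) by the same reindexing computation.
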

\begin{proof}
  \ref{item:nerve G-CW} To show that $N_B(\calV)$ is an $H$-simplicial complex, it
  suffices to prove that the $H$-action sends simplices of
  $N_B(\calV)$ to simplices of $N_B(\calV)$.  Let~$v$ be a vertex
  of~$N_B(\calV)$ corresponding to~$V\in\calV$ with $V\cap B\neq
  \emptyset$. Then for every~$h\in H$, we have $\emptyset\neq h\cdot
  (V\cap B)=(h\cdot V)\cap (h\cdot B)\subset (h\cdot V)\cap B$.  This
  shows that the vertex~$h\cdot v$ of~$N(\calV)$ corresponding
  to~$h\cdot V\in \calV$ lies in~$N_B(\calV)$. The same argument also
  extends to higher-dimensional simplices, which proves the claim.  Then the
  geometric realisation~$|N_B(\calV)|$ is an $H$-CW-complex
  (Example~\ref{ex:from:sc:to:cw}).
		
  \ref{item:moving sets} We show that the isotropy groups of the vertices of the
  barycentric subdivision of $N(\calV)$ lie in $\F$. This is indeed
  sufficient; since the action is simplicial the stabiliser of every
  interior point of a $k$-simplex in the barycentric subdivision of~
  $N(\calV)$ is the intersection of the stabilisers of its $k+1$
  vertices. Then the fact that~$\F$ is closed under finite
  intersections yields the thesis.
  
  Let $v$ be a vertex in the barycentric subdivision of $N(\calV)$,
  associated to a $k$-simplex corresponding to~$V_0,\ldots,V_k \in
  \calV$ with $V_0\cap\cdots\cap V_k\neq\emptyset$. It remains to show
  that the subgroup
  \[
  G_v = \bigl\{g\in G
  \bigm| \{g\cdot V_0,\ldots, g\cdot V_k\}=\{V_0,\ldots,V_k\}
  \bigr\}
  \]
  of~$G$ lies in the family~$\F$. For $k=0$,
  we know that $G_v=\Stab_G(V_0)\in \F$ by the assumption that $\calV$ has isotropy
  in $\F$. On the other hand, for $k>0$, by the assumption that
  $g\cdot V\cap V\neq\emptyset$ implies $g\cdot V=V$, we have that
  $g\cdot V_i=V_j$ for~$i,j\in \{0,\ldots,k\}$ implies $i=j$. Hence
  $G_v=\Stab_G(V_0)\cap\cdots\cap \Stab_G(V_k)$. This group lies
  in~$\F$ since $\F$ is closed under finite intersections.
		
  \ref{item:G-nerve map} The $G$-invariance of the partition of unity implies the
  $G$-equivariance of the nerve map~\eqref{eqn:nerve map} as follows:
  For all~$g\in G$ and all~$y\in Y$, we have
  \begin{align*}
    \mu(g\cdot y) &= \sum_{V\in\calV}\psi_V(g\cdot y)\cdot V 
    = \sum_{V\in\calV}\psi_{g^{-1}\cdot V}(y)\cdot V \\
    &= \sum_{V\in\calV}\psi_{V}(y)\cdot (g\cdot V)
    = g\cdot \mu(y) \,.
  \end{align*}
  This finishes the proof.
\end{proof}

We extend the previous results to the relative situation:

\begin{lem}[Equivariant nerve pair]\label{lem:nerve pair}
  Let $(G,\calH)$ be a group pair and let
  $(Y,B)$ be a~$(G,\calH)$-CW-pair.  Let $\F$ be an
  intersection-closed family of subgroups of~$G$ and $\calV$ be
  a~$G$-invariant cover of~$Y$ with isotropy in~$\F$.  Suppose that
  the following hold:
  \begin{enumerate}[label=\enum]
  \item\label{item:nerve pair1} For all~$V\in\calV,g\in G$ with~$g\cdot V\cap V\neq
    \emptyset$, we have~$g\cdot V=V$;
  \item\label{item:nerve pair2} There exists a $G$-invariant partition of unity on $Y$
    subordinate to $\calV$;
  \item\label{item:nerve pair3} For all~$V\in\calV$ with $V\cap B\neq \emptyset$, the
    intersection $V\cap B$ is connected.
  \end{enumerate}
  Then $(|N(\calV)|,|N_B(\calV)|)$ is a $(G,\calH)$-CW-pair with
  isotropy in~$\F$. Moreover, the nerve map~$\mu\colon Y\to |N(\calV)|$
  induces a map of $(G,\calH)$-CW-pairs:
  $$
  \mu\colon (Y,B)\to \bigl(|N(\calV)|,|N_B(\calV)|\bigr) \,.
  $$
\end{lem}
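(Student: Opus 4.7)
The plan is to deduce this from Lemma~\ref{lem:nerve} together with a careful analysis of how the decomposition $B=\coprod_{i\in I}G\times_{H_i}B_i$ interacts with the cover $\calV$; throughout we write $[g,b]$ for the class of $(g,b)$ in $G\times_{H_i}B_i$ and identify $B_i$ with its image $[e,B_i]\subset B$. First, Lemma~\ref{lem:nerve}~\ref{item:nerve G-CW} (with subgroup $H=G$) gives that $N(\calV)$ and $N_B(\calV)$ are $G$-simplicial complexes; \ref{item:moving sets} (using hypothesis~\ref{item:nerve pair1} and that $\calV$ has isotropy in the intersection-closed family $\F$) gives that $|N(\calV)|$, and hence its $G$-subcomplex $|N_B(\calV)|$, has isotropy in $\F$; and \ref{item:G-nerve map} (using hypothesis~\ref{item:nerve pair2}) gives $G$-equivariance of $\mu$. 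The formula~\eqref{eqn:nerve map} shows moreover that $\mu(B)\subset|N_B(\calV)|$.

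The main step is to exhibit a $G$-simplicial isomorphism $\coprod_{i\in I}G\times_{H_i}D_i\cong N_B(\calV)$ for suitable $H_i$-subcomplexes $D_i$. The key observation, relying on hypothesis~\ref{item:nerve pair3}, is that for $V\in\calV_B$ the connected set $V\cap B$ must lie in a single piece of the clopen decomposition $B=\coprod_{i\in I}\coprod_{gH_i\in G/H_i}[g,B_i]$; say $V\cap B\subset [g,B_i]$. More generally, if $V_0\cap\cdots\cap V_n\cap B\neq\emptyset$, picking $y\in [g,B_i]$ in this intersection, each $V_j\cap B$ is connected and contains $y$, hence $V_j\cap B\subset [g,B_i]$ for every $j$. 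For each $i\in I$, I define $D_i\subset N_B(\calV)$ as the simplicial $H_i$-subcomplex with vertex set $\{V\in\calV\mid V\cap [e,B_i]\neq\emptyset\}$ and simplices $\{V_0,\ldots,V_n\}$ with $V_0\cap\cdots\cap V_n\cap [e,B_i]\neq\emptyset$.

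The main obstacle is to show that the induction map $\coprod_{i\in I}G\times_{H_i}D_i\to N_B(\calV)$, $[g,V]\mapsto g\cdot V$, is a $G$-equivariant simplicial isomorphism. Surjectivity is the analysis of the previous paragraph: any simplex of $N_B(\calV)$ can be translated by some $g^{-1}$ so that the associated intersection with $B$ meets $[e,B_i]$, putting it in $D_i$. For injectivity, suppose $V_1\cap [e,B_{i_1}]\neq\emptyset$, $V_2\cap [e,B_{i_2}]\neq\emptyset$, and $kV_1=V_2$ for some $k\in G$. The connected set $V_2\cap B$ equals $k(V_1\cap B)\subset [k,B_{i_1}]$ while simultaneously $V_2\cap B\subset [e,B_{i_2}]$, so $[e,B_{i_2}]\cap [k,B_{i_1}]\neq\emptyset$. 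Since the pieces $G\times_{H_i}B_i$ are disjoint, this forces $i_1=i_2=:i$; since the copies $[g,B_i]$ inside $G\times_{H_i}B_i$ are indexed disjointly by $G/H_i$, it further forces $eH_i=kH_i$, i.e.\ $k\in H_i$. The same argument extends to higher-dimensional simplices, yielding the claimed $G$-simplicial isomorphism and hence the $(G,\calH)$-CW-pair structure on $(|N(\calV)|,|N_B(\calV)|)$.

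Finally, for $b\in B_i$ the only $V\in\calV$ contributing to $\mu([e,b])$ are those with $[e,b]\in V$, hence with $V\cap [e,B_i]\neq\emptyset$, so $\mu([e,B_i])\subset |D_i|$. By $G$-equivariance, $\mu$ restricted to $G\times_{H_i}B_i$ has the form $G\times_{H_i}\mu_i$ for the $H_i$-map $\mu_i\coloneqq\mu|_{[e,B_i]}\colon B_i\to |D_i|$, which is exactly the condition for $\mu\colon(Y,B)\to(|N(\calV)|,|N_B(\calV)|)$ to be a map of $(G,\calH)$-CW-pairs.
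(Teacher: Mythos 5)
Your proposal is correct and follows essentially the same route as the paper: both reduce the absolute statements to Lemma~\ref{lem:nerve}, and both use hypothesis~\ref{item:nerve pair3} to show that a connected $V\cap B$ lies in a single piece $[g,B_i]$, so that the induction map $\coprod_{i\in I}G\times_{H_i}N_{B_i}(\calV)\to N_B(\calV)$ (your $D_i$ is exactly $N_{B_i}(\calV)$) is a $G$-simplicial isomorphism. The only cosmetic difference is that you verify injectivity and surjectivity directly, whereas the paper writes down the inverse map explicitly.
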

\begin{proof}
  By Lemma~\ref{lem:nerve}, assumption~\ref{item:nerve pair1} implies that
  ~$|N(\calV)|$ is a~$G$-CW-complex with
  isotropy in~$\F$, and assumption~\ref{item:nerve pair2} implies that the nerve map
  $\mu\colon Y\to |N(\calV)|$ is $G$-equivariant.
  
  We write the collection~$\calH$ as $(H_i)_{i\in I}$.
  Since $(Y,B)$ is a $(G,\calH)$-CW-pair, we have a decomposition
  $B= \coprod_{i\in I}G\times_{H_i} B_i$, where $B_i$ is an
  $H_i$-CW-complex. We identify~$B_i$ with the subset $[e,B_i]\subset
  B\subset Y$, where $e \in G$ denotes the neutral element. 
  We also identify the set of vertices of~$|N(\calV)|$ with~$\calV$.
  There is a~$G$-map
  \[
  	\Phi\colon \coprod_{i\in I}G\times_{H_i}|N_{B_i}(\calV)|\to |N_B(\calV)| \,,
  \]
  mapping a vertex~$[g,V]$ of~$G\times_{H_i} |N_{B_i}(\calV)|$ to 
  the vertex~$g\cdot V$ of~$|N_B(\calV)|$, and that is defined by affine extension.
  The affine extension is well-defined, 
  since the images of vertices spanning a simplex in~$|N_{B_i}(\calV)|$ 
  also span a simplex in~$|N_B(\calV)|$.
  We claim that assumption~\ref{item:nerve pair3} implies that~$\Phi$ is
  a~$G$-homeomorphism.
  
  Indeed, the inverse map~$\Phi^{-1}$ is given as follows:
  For a vertex~$V$ of~$|N_B(\calV)|$, we have $V\cap B\neq\emptyset$ and
  hence the intersection~$V\cap B$ is connected by assumption~\ref{item:nerve pair3}.
  Thus, there exists a unique element~$i\in I$ and a unique coset~$gH_i\in G/H_i$
  such that $V\cap [g,B_i]\neq\emptyset$. 
  Since $V\cap[g,B_i]=g(g^{-1}\cdot V\cap B_i)$, 
  we may define~$\Phi^{-1}$ to map the vertex~$V$ to 
  the vertex~$[g,g^{-1}\cdot V]$.
  This assignment is~$G$-equivariant: Indeed, for every~$g' \in G$, we have
  $\emptyset\neq g'(V\cap [g,B_i])=g' g(g^{-1}\cdot V\cap B_i)$. 
  Hence under~$\Phi^{-1}$ the vertex~$g' \cdot V$ is mapped to
  the vertex~$[g' g,g^{-1}\cdot V]$. 
  
  Then~$\Phi^{-1}$ is determined by affine extension. This is well-defined
  because the images of vertices spanning a simplex in~$|N_B(\calV)|$
  also span a simplex in the corresponding~$|N_{B_i}(\calV)|$.
  Thus~$\Phi$ is a~$G$-homeomorphism, showing that 
  $(|N(\calV)|,|N_B(\calV)|)$ is a~$(G,\calH)$-CW-pair.
   
  The $G$-map~$\mu\colon (Y,B)\to (|N(\calV)|,|N_B(\calV)|)$ is 
  a map of~$(G,\calH)$-CW-pairs, since we have $\mu(B_i)\subset |N_{B_i}(\calV)|$.
\end{proof}

%%%%%%%%%%%%%%%
\subsection{Relative open covers}
\label{sec:lifted covers}

We study our main example of equivariant nerve pairs coming from
lifted covers of CW-pairs.

\begin{ex}[Lifted cover]\label{ex:covers}
  Let $X$ be a connected CW-complex, let
  $G\coloneqq\pi_1(X)$, and let $p\colon \widetilde{X}\to X$ denote the
  universal covering.  For a cover~$\calU$ of~$X$ by path-connected
  open subsets, we consider the \emph{lifted
    cover}~$\widetilde{\calU}$ of~$\widetilde{X}$:
  \[
  \widetilde{\calU}\coloneqq
  \bigl\{V\subset \widetilde{X}
  \bigm| V \text{ is a path-connected component of } p^{-1}(U) \text{ for some } U\in \calU
  \bigr\} \,.
  \]
  Clearly, $\widetilde{\calU}$ is a $G$-invariant cover of
  $\widetilde{X}$.  Note that for every~$g\in G$,
  $V\in\widetilde{\calU}$, the condition $g\cdot V\cap V\neq
  \emptyset$ implies~$g\cdot V=V$.  Moreover, for
  every~$V\in\widetilde{\calU}$ we have that
  $\Stab_G(V)$ is conjugate to $\im(\pi_1(p(V))\to \pi_1(X))$.
  This shows that if $\calU$ is an~$\F$-cover of $X$, then
  $\widetilde{\calU}$ has isotropy in $\F$.
	
  Every given partition of unity $(\varphi_U)_{U\in\calU}$ on $X$
  subordinate to $\calU$ lifts to a $G$-invariant partition
  of unity $(\widetilde{\varphi}_V)_{V\in\widetilde{\calU}}$ on
  $\widetilde{X}$ subordinate to $\widetilde{\calU}$ as follows: For
  $V\in \widetilde{\calU}$, define
  \[
  \widetilde{\varphi}_V\coloneqq \chi_V\cdot (\varphi_{p(V)}\circ p)\colon \widetilde{X}\to [0,1] \,,
  \]
  where $\chi_V\colon \widetilde{X}\to [0,1]$ denotes the
  characteristic function on $V\subset \widetilde{X}$.  Let $\nu$
  and~$\widetilde{\nu}$ be the nerve maps associated
  to~$(\varphi_U)_{U\in\calU}$
  and~$(\widetilde{\varphi}_V)_{V\in\widetilde{\calU}}$, respectively.
  
  The simplicial map~$N(p)\colon N(\widetilde{\calU})\to N(\calU)$,
  under which a simplex of~$N(\widetilde{\calU})$ corresponding
  to~$V_0,\ldots,V_k\in\widetilde{\calU}$ is mapped to the simplex
  of~$N(\calU)$ corresponding to~$p(V_0),\ldots,p(V_k)\in \calU$,
  makes the following diagram commute:
  \[\begin{tikzcd}
  	\widetilde{X}\ar{r}{\widetilde{\nu}}\ar{d}{p} & {|N(\widetilde{\calU})|}\ar{d}{|N(p)|} \\
	X\ar{r}{\nu} & {|N(\calU)|} \,.
  \end{tikzcd}\]
\end{ex}

Given a pair of spaces~$(X,A)$, we introduce conditions on covers of~$X$
requiring a certain regularity near the subspace~$A$. These conditions 
will give rise to desirable properties of the lifted covers.

\begin{defn}[Relative cover]\label{defn:relative cover}
  Let $(X,A)$ be a pair of spaces with path-connected ambient space~$X$.
  A \emph{relative cover} of~$(X,A)$ is a cover~$\calU$ of~$X$ by
  path-connected open subsets such that for all~$U\in\calU$ the
  following hold:
  \begin{enumerate}[label=(RC\arabic*)]
  \item\label{item:RC1} If $U\cap A\neq \emptyset$, then $U\cap A$ is path-connected;
  \item\label{item:RC2} If $U\cap A\neq \emptyset$, then the inclusion
    \[
    \im\bigl(\pi_1(U\cap A,x)\to \pi_1(X,x)\bigr)
    \into \im\bigl(\pi_1(U,x)\to \pi_1(X,x)\bigr)
    \]
    is an isomorphism for some (whence every)~$x \in U \cap A$.
  \end{enumerate}

  A relative open cover~$\calU$ is \emph{\admissible} if for
  every~$k \in \N$ and all~$U_1, \dots, U_k \in \calU$ with
  $U_1\cap\dots\cap U_k\cap A\neq \emptyset$, each path-connected
  component of $U_1\cap\dots\cap U_k$ intersects~$A$.
\end{defn}

  We also say that a relative open cover~$\calU$ of~$(X,A)$ is \emph{convex} if the 
  underlying cover $\calU$ of $X$ is convex.
  Clearly, every convex relative cover is in particular \admissible.
	
  Given a family~$\F$ of subgroups of~$\pi_1(X)$, a relative
  cover~$\calU$ of~$(X,A)$ is a \emph{relative $\F$-cover} if the
  cover~$\calU$ of~$X$ is an $\F$-cover.

Keeping the same notation as in Example~\ref{ex:covers}, we have the
following:

\begin{prop}[Equivariant nerve pair of lifted covers]\label{prop:cover downstairs}
  Let $(X,A)$ be a CW-pair with fundamental group
  pair $(G,\calH)$.  Let $\F$ be an intersection-closed family of
  subgroups of $G$ and $\calU$ be a relative $\F$-cover of $X$. Let $\nu$ and
  $\widetilde{\nu}$ be the above nerve maps of $\calU$ and
  $\widetilde{\calU}$, respectively. 
  
  Then
  $(|N(\widetilde{\calU})|,|N_{p^{-1}(A)}(\widetilde{\calU})|)$ is a
  $(G,\calH)$-CW-pair with isotropy in $\F$
  and the map~$\widetilde{\nu}$ 
  induces a map of $(G,\calH)$-CW-pairs
  $$
  \widetilde{\nu} \colon\bigl(\widetilde{X},p^{-1}(A)\bigr)\to \bigl(|N(\widetilde{\calU})|, |N_{p^{-1}(A)}(\widetilde{\calU})|\bigr) \,
  $$
  that makes the following diagram commute:
  \[\begin{tikzcd}
  \bigl(\widetilde{X},p^{-1}(A)\bigr)\ar{r}{\widetilde{\nu}}\ar{d}{p}
  & \bigl(|N(\widetilde{\calU})|, |N_{p^{-1}(A)}(\widetilde{\calU})|\bigr)\ar{d}{|N(p)|} \\
  (X,A)\ar{r}{\nu} & \bigl(|N(\calU)|, |N_A(\calU)|\bigr) \,.
  \end{tikzcd}\]
  Moreover, we have the following:
  \begin{enumerate}[label=\enum]
  \item\label{item:dim mult} If $\calU$ is \admissible,
    then
    \[\dim\bigl(N(\widetilde{\calU}),N_{p^{-1}(A)}(\widetilde{\calU})\bigr) =
    \mult_A(\calU)-1\,;\]
    \item\label{item:convex} If $\calU$ is convex,
    then the map~$N(p)$ induces
    isomorphisms of simplicial complexes:
  \begin{align*}
  	G\backslash N(\widetilde{\calU})
	&\cong N(\calU) \,;\\
	G\backslash N_{p^{-1}(A)}(\widetilde{\calU})
	&\cong N_A(\calU) \,.
  \end{align*}
    \end{enumerate}
\end{prop}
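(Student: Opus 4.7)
The plan is to view Proposition~\ref{prop:cover downstairs} as an application of the general equivariant nerve pair result Lemma~\ref{lem:nerve pair} to the universal covering pair~$(\widetilde{X}, p^{-1}(A))$ from Example~\ref{ex:universal covering pair} and the lifted cover~$\widetilde{\calU}$ from Example~\ref{ex:covers}, and then to extract parts~\ref{item:dim mult} and~\ref{item:convex} from a direct combinatorial comparison of simplices upstairs and downstairs.

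Example~\ref{ex:covers} already supplies hypotheses~\ref{item:nerve pair1} (disjointness of distinct lifts) and~\ref{item:nerve pair2} (a lifted $G$-invariant partition of unity) of Lemma~\ref{lem:nerve pair}, the isotropy of~$\widetilde\calU$ in~$\F$, and commutativity of the diagram. The main obstacle is hypothesis~\ref{item:nerve pair3}: for every~$V\in\widetilde\calU$ meeting~$p^{-1}(A)$, the intersection $V\cap p^{-1}(A)$ is connected. My approach is to write~$V$ as a path-component of~$p^{-1}(U)$ for some~$U\in\calU$ with $U\cap A\neq\emptyset$; condition~\ref{item:RC1} makes~$U\cap A$ path-connected, so the path-components of~$p^{-1}(U\cap A)$ and of~$p^{-1}(U)$ are parametrised by the coset spaces $\pi_1(X)/\im(\pi_1(U\cap A)\to\pi_1(X))$ and $\pi_1(X)/\im(\pi_1(U)\to\pi_1(X))$ respectively, and condition~\ref{item:RC2} identifies these two images. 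Hence exactly one path-component of $p^{-1}(U\cap A)$ lies inside~$V$, and this component equals~$V\cap p^{-1}(A)$. Once~\ref{item:nerve pair3} is in place, Lemma~\ref{lem:nerve pair} delivers the $(G,\calH)$-CW-pair structure on~$(|N(\widetilde\calU)|, |N_{p^{-1}(A)}(\widetilde\calU)|)$, the isotropy in~$\F$, and the map of $(G,\calH)$-CW-pairs~$\widetilde\nu$.

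For part~\ref{item:dim mult}, I would prove the equality $\mult_{p^{-1}(A)}(\widetilde\calU)=\mult_A(\calU)$. The inequality $\mult_A(\calU)\le\mult_{p^{-1}(A)}(\widetilde\calU)$ follows by lifting a point of the intersection of distinct $U_1,\ldots,U_k\in\calU$ witnessing $\mult_A(\calU)$ and taking the path-components~$V_i$ of~$p^{-1}(U_i)$ through that lift; the~$V_i$ are pairwise distinct because distinct path-components of the same preimage are disjoint, and any point of $V_1\cap\dots\cap V_k$ in~$p^{-1}(A)$ would project to~$A$. For the reverse inequality I would take distinct~$V_1,\ldots,V_k\in\widetilde\calU$ with non-empty intersection missing~$p^{-1}(A)$ and argue by contradiction: if $p(V_1)\cap\dots\cap p(V_k)\cap A\neq\emptyset$, then \admissibility\ would force the path-component of~$p(V_1)\cap\dots\cap p(V_k)$ containing~$p(x)$ (for any $x\in V_1\cap\dots\cap V_k$) to meet~$A$, and lifting a path in this component from~$p(x)$ to a point of~$A$ would yield a point of $V_1\cap\dots\cap V_k\cap p^{-1}(A)$.

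For part~\ref{item:convex} I would establish both isomorphisms by showing that~$N(p)$ is surjective on simplices and that~$G$ acts transitively on its fibres. Surjectivity is the lifting argument above. For transitivity, given two lifts~$(V_0,\ldots,V_k)$ and~$(V_0',\ldots,V_k')$ of a simplex~$(U_0,\ldots,U_k)$, I would pick path-components $W\subset V_0\cap\dots\cap V_k$ and $W'\subset V_0'\cap\dots\cap V_k'$ of~$p^{-1}(U_0\cap\dots\cap U_k)$; convexity makes~$U_0\cap\dots\cap U_k$ path-connected, so~$G$ acts transitively on the path-components of its preimage, and any $g\in G$ with $gW=W'$ satisfies $gV_i=V_i'$ for every~$i$ because $gV_i$ and $V_i'$ are path-components of~$p^{-1}(U_i)$ sharing~$W'$. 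For the restriction to the subcomplexes, I would use that each path-component of the preimage of a path-connected set surjects onto it, so whenever $U_0\cap\dots\cap U_k$ meets~$A$, every such component meets~$p^{-1}(A)$, matching up the relevant simplices.
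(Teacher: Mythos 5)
Your proposal is correct and follows essentially the same route as the paper's proof: reduce the first part to Lemma~\ref{lem:nerve pair} by verifying its hypothesis~(iii) for the lifted cover using~\ref{item:RC1} and~\ref{item:RC2}, prove $\mult_{p^{-1}(A)}(\widetilde{\calU})=\mult_A(\calU)$ via \admissibility\ and path lifting for part~(i), and use lifting of paths in the path-connected intersections for part~(ii). The only local variations are that you verify hypothesis~(iii) through the coset-space parametrisation of the components of $p^{-1}(U\cap A)$ and $p^{-1}(U)$ where the paper runs a homotopy-lifting contradiction, and that in part~(ii) you re-prove the quotient isomorphism $G\backslash N(\widetilde{\calU})\cong N(\calU)$ by a direct transitivity argument instead of citing~\cite[Lemma~4.5~(3)]{Loeh-Sauer19}; both variants are sound.
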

\begin{proof}
  To show that
  $(|N(\widetilde{\calU})|,|N_{p^{-1}(A)}(\widetilde{\calU})|)$ is a
  $(G,\calH)$-CW-pair with isotropy in~$\F$, we verify that the
  lifted~$G$-invariant cover~$\widetilde{\calU}$
  of~$\widetilde X$ satisfies all assumptions of Lemma~\ref{lem:nerve
    pair}. By Example~\ref{ex:covers}, we know that
  $\widetilde{\calU}$ has isotropy in~$\F$, that there exists a
  $G$-invariant partition of unity on~$\widetilde{X}$ subordinate
  to~$\widetilde{\calU}$, and that $g\cdot V\cap V\neq\emptyset$
  implies~$g\cdot V=V$ for all~$g\in G,V\in\widetilde{\calU}$. Hence,
  we are left to show that if $V\in \widetilde{\calU}$ with $V\cap
  p^{-1}(A)\neq\emptyset$, then $V\cap p^{-1}(A)$ is connected.
	
  Assume for a contradiction that $V \cap p^{-1}(A)$ is disconnected.
  Let us set $U\coloneqq p(V)$. By condition~\ref{item:RC1} we know that
  $U\cap A$ is connected. This shows that there exists a point~$a\in
  U\cap A$ with two lifts $\widetilde{a}_1,\widetilde{a}_2$ contained
  in different components of~$V\cap p^{-1}(A)$. Since $V$ is
  path-connected, there exists a path~$\gamma$ in $V$ connecting
  $\widetilde{a}_1$ to~$\widetilde{a}_2$. By construction, the image
  of~$\gamma$ under~$p$ is a loop~$p_*\gamma$ in~$U$ based at~$a$.
  Then, by condition~\ref{item:RC2},
  the homotopy class $[p_*\gamma] \in
  \pi_1(X, a)$ admits a representative whose support is contained
  in~$U \cap A$.  Thus, there exists a lifted homotopy
  in~$\widetilde{X}$ relative to the endpoints from~$\gamma$ to a
  path in~$V\cap p^{-1}(A)$. This contradicts the fact that
  $\widetilde{a}_1$ and $\widetilde{a}_2$ lie in different components
  of~$V\cap p^{-1}(A)$.  Hence Lemma~\ref{lem:nerve pair} applies and
  yields the claim.
  
    \ref{item:dim mult} We show that $\mult_{p^{-1}(A)}(\widetilde{\calU})=
  \mult_{A}(\calU)$, which immediately implies the claim.
  The inequality $\mult_{p^{-1}(A)}(\widetilde{\calU})\ge \mult_A(\calU)$ is clear.
  To show the opposite inequality,
  let $V_1,\ldots,V_k\in\widetilde{\calU}$ with $\bigcap_{i=1}^k V_i\neq\emptyset$ 
  and $\bigcap_{i=1}^k V_i\cap p^{-1}(A)=\emptyset$.
  We claim that $\bigcap_{i=1}^k p(V_i)\cap A = \emptyset$, 
  whence $k \leq \mult_A(\calU)$ because the $(p(V_i))_{i}$ are pairwise different.  
  Indeed, assume for a contradiction that $\bigcap_{i=1}^k p(V_i)\cap
  A\neq \emptyset$. 
  Take a point~$\widetilde{x}\in \bigcap_{i=1}^k V_i$ and consider 
  $p(\widetilde{x})\in \bigcap_{i=1}^k p(V_i)$.
  Then the component of $\bigcap_{i=1}^k p(V_i)$
  containing~$p(\widetilde{x})$ intersects~$A$ by \admissibility~of~$\calU$. 
  Hence, we can
  choose a path~$\tau$ in~$\bigcap_{i=1}^k p(V_i)$ connecting $p(\widetilde{x})$ to some
  point in~$A$. Then the lifted path~$\widetilde{\tau}$ of $\tau$ in~$\widetilde{X}$ with
	starting point~$\widetilde{x}$ has endpoint in~$p^{-1}(A)$ and is 
	supported in~$\bigcap_{i=1}^k V_i$. This shows that
	 $\bigcap_{i=1}^k V_i\cap p^{-1}(A)\neq\emptyset$,
	 which is a contradiction.     
     
  	\ref{item:convex} If~$\calU$ is convex, then 
	the map~$N(p)\colon N(\widetilde{\calU})\to N(\calU)$ induces 
	the first isomorphism~$G\backslash N(\widetilde{\calU})\cong N(\calU)$
	by~\cite[Lemma~4.5~(3)]{Loeh-Sauer19}.
	Hence, to deduce the second isomorphism, it suffices to show that
	$N_{p^{-1}(A)}(\widetilde{\calU})=N(p)^{-1}(N_A(\calU))$.
	The inclusion $N_{p^{-1}(A)}(\widetilde{\calU})\subset N(p)^{-1}(N_A(\calU))$ is clear.
	To prove the opposite inclusion,
	let~$V_1,\ldots,V_k\in\widetilde{\calU}$ span a simplex in~$N(p)^{-1}(N_A(\calU))$.
	This means that $\bigcap_{i=1}^k V_i\neq\emptyset$
	and $\bigcap_{i=1}^k p(V_i)\cap A\neq \emptyset$, and
	we need to show that $\bigcap_{i=1}^k V_i\cap p^{-1}(A)\neq \emptyset$.
	Take a point~$\widetilde{x}\in \bigcap_{i=1}^k V_i$ and consider
	$p(\widetilde{x})\in \bigcap_{i=1}^k p(V_i)$. Since $\calU$ is convex
	and $\bigcap_{i=1}^k p(V_i) \cap A \neq \emptyset$,
	we can choose a path~$\tau$ in $\bigcap_{i=1}^k p(V_i)$ with 
	starting point~$p(\widetilde{x})$ and endpoint in~$A$.
	 As before, the lifted path~$\widetilde{\tau}$ of $\tau$ in $\widetilde{X}$ starting at $\widetilde{x}$ shows that
  $\bigcap_{i=1}^k V_i\cap p^{-1}(A)\neq\emptyset$.
\end{proof}

%%%%%%%%%%%%%%
\subsection{Relative generalised LS-category}\label{subsec:relgenLS}

We introduce a relative version of the generalised
Lusternik--Schnirelmann category for families of
subgroups~\cite[Definition~2.16]{CLM20}.

\begin{defn}[Relative $\F$-category]\label{defn:factorisation}
	Let $(X,A)$ be a CW-pair with fundamental group pair~$(G,\calH)$
	and let~$p\colon \widetilde{X}\to X$ denote the universal covering. 
	Let~$\F$ be a family of subgroups of~$G$ that contains 
	the trivial subgroup.
	The \emph{relative $\F$-category} of~$(X,A)$, denoted by~$\cat_\F(X,A)$,
	is the minimal~$n\in \IN$ such that
	there exists a $(G,\calH)$-CW-pair~$(Y, B)$
	with isotropy in $\F$
	of relative dimension~$n-1$ and a 
	map of $(G,\calH)$-CW-pairs
	$(\widetilde{X},p^{-1}(A))\to (Y, B)$.
	If no such integer~$n$ exists, we set $\cat_\F(X,A)\coloneqq +\infty$.
	
	We will refer to~$\cat_\AME(X,A)$ also as the \emph{relative amenable category} of~$(X,A)$.
\end{defn}

\begin{rem}\label{rem:three defs}
	In the situation of Definition~\ref{defn:factorisation},
	let~$\EFGH$ be a model for the classifying space of the group pair~$(G,\calH)$
	with respect to the family~$\F$. Consider the (up to~$G$-homotopy unique)
	map of~$(G,\calH)$-CW-pairs
	\[
		f\colon \bigl(\widetilde{X},p^{-1}(A) \bigr)\to \EFGH \,.
	\]
	Let~$n\in \IN$. Then the following are equivalent:
	\begin{enumerate}[label=\enum]
		\item We have $\cat_\F(X,A)\le n$;
		\item The map~$f$ factors (up to~$G$-homotpy) through a~$(G,\calH)$-CW-pair~$(Y,B)$ 
		with isotropy in~$\F$ of relative dimension~$n-1$;
		\item The map~$f$ is~$G$-homotopic to a map of~$(G,\calH)$-CW-pairs with values
		in the relative $(n-1)$-skeleton of~$\EFGH$.
	\end{enumerate}
	Indeed, the equivalence of these conditions follows from
	the universal property of~$\EFGH$ and the equivariant cellular approximation
	theorem~\cite[Theorem~2.1]{lueck_TG}.
\end{rem}

By definition, the relative $\F$-category satisfies $\cat_\F(X,A)\le \dim(X,A)+1$.
A more efficient upper bound for the relative category is provided by 
the existence of \admissible\ relative covers:

\begin{defn}[Relative $\F$-\category]\label{defn:relative category}
  Let $(X, A)$ be a pair of spaces and let $\F$ be a family of
  subgroups of $\pi_1(X)$.  The \emph{relative $\F$-\category} of $(X,
  A)$, denoted by~$\relcat_{\F}(X, A)$, is the minimal~$n \in \N$
  such that there exists a \admissible\ relative $\F$-cover~$\calU$
  of~$(X, A)$ with~$\mult_A(\calU) = n$.  If no such integer~$n$ exists, we
  set $\relcat_{\F}(X, A) \coloneqq +\infty$.
	
  We will refer to~$\relcat_{\AME}(X, A)$ also as the
  \emph{relative amenable \category} of~$(X,A)$.
\end{defn}

\begin{lem}\label{lem:factorisation multiplicity}
	Let $(X,A)$ be a CW-pair with fundamental group 
	pair~$(G,\calH)$.
	Let~$\F$ be a family of subgroups of~$G$ that contains 
	the trivial subgroup. Then we have
	\[
		\cat_\F(X,A)\le \relcat_\F(X,A)\,.
	\]
	\begin{proof}
		We may assume that $n\coloneqq \relcat_\F(X,A)$ is finite.
		Let~$\calU$ be a \admissible\ relative~$\F$-cover of~$(X,A)$ with
		$\mult_A(\calU)=n$. By Proposition~\ref{prop:cover downstairs}, 
		the equivariant nerve pair~$(|N(\widetilde{\calU})|,|N_{p^{-1}(A)}(\widetilde{\calU})|)$ of the lifted cover~$\widetilde{\calU}$ of~$\widetilde{X}$
		is a~$(G,\calH)$-CW-pair with isotropy in $\F$ and of relative dimension~$n-1$.
        Hence the nerve map $$
  \widetilde{\nu} \colon\bigl(\widetilde{X},p^{-1}(A)\bigr)\to \bigl(|N(\widetilde{\calU})|, |N_{p^{-1}(A)}(\widetilde{\calU})|\bigr) \,
  $$
  exhibits the desired inequality.
	\end{proof}
\end{lem}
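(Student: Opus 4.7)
The plan is to construct a $(G,\calH)$-CW-pair with isotropy in $\F$ of relative dimension $\relcat_\F(X,A)-1$ together with a map of $(G,\calH)$-CW-pairs from $(\widetilde{X},p^{-1}(A))$ into it; by Definition~\ref{defn:factorisation} this yields the desired inequality. We may assume that $n\coloneqq\relcat_\F(X,A)$ is finite, as otherwise the statement is vacuous.

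By the definition of the relative $\F$-\category, we can choose a \admissible\ relative $\F$-cover $\calU$ of $(X,A)$ with $\mult_A(\calU)=n$. Lifting $\calU$ to the $G$-invariant cover $\widetilde{\calU}$ of $\widetilde{X}$ as in Example~\ref{ex:covers}, Proposition~\ref{prop:cover downstairs} guarantees that the equivariant nerve pair
\[
  \bigl(|N(\widetilde{\calU})|,\,|N_{p^{-1}(A)}(\widetilde{\calU})|\bigr)
\]
is a $(G,\calH)$-CW-pair with isotropy in $\F$. Moreover, item~(i) of the same proposition applies because $\calU$ is \admissible, and shows that the relative dimension of this nerve pair equals $\mult_A(\calU)-1 = n-1$.

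Finally, Proposition~\ref{prop:cover downstairs} also provides the nerve map of pairs
\[
  \widetilde{\nu}\colon \bigl(\widetilde{X},p^{-1}(A)\bigr)\to \bigl(|N(\widetilde{\calU})|,\,|N_{p^{-1}(A)}(\widetilde{\calU})|\bigr),
\]
which is a map of $(G,\calH)$-CW-pairs. Combined with the dimension bound above, this exhibits exactly the data required by Definition~\ref{defn:factorisation} to conclude $\cat_\F(X,A)\le n = \relcat_\F(X,A)$. There is no genuine obstacle here: all the substantive work has already been absorbed into Proposition~\ref{prop:cover downstairs}, and this lemma is a direct assembly of that output with the definition of the relative $\F$-category.
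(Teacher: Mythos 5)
Your proposal is correct and follows essentially the same route as the paper: choose a weakly convex relative $\F$-cover of relative multiplicity $n=\relcat_\F(X,A)$, pass to the lifted cover, and apply Proposition~\ref{prop:cover downstairs} to obtain the equivariant nerve pair of relative dimension $n-1$ with isotropy in $\F$, so that the nerve map realises the bound in Definition~\ref{defn:factorisation}. The only difference is that you spell out the appeal to item~(i) of that proposition for the dimension count, which the paper leaves implicit.
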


\begin{rem}[Category and multiplicity, absolute case]\label{rem:abscat}
	Let~$X$ be a path-connected CW-complex with fundamental group~$G$
	and let~$\F$ be a family of subgroups of~$G$.
	In the absolute case, the generalised Lusternik--Schnirelmann 
	category~$\cat_\F(X)$ is defined as the minimal~$n$ for which 
	there exists an open~$\F$-cover of~$X$ by $n$ many
	\emph{not necessarily path-connected} subsets.
	If the family~$\F$ is closed under taking subgroups,
	this is compatible with Definition~\ref{defn:factorisation} in the sense that
	$\cat_\F(X,\emptyset)=\cat_\F(X)$ by~\cite[Lemma~7.6]{CLM20}.
	
	In particular, also the converse estimate of 
	Lemma~\ref{lem:factorisation multiplicity} holds in the absolute case:
	Indeed, taking path-connected components of open~$\F$-covers with~$n$ 
	not necessarily path-connected members produces an~$\F$-cover of multiplicity
	at most~$n$. Therefore, we obtain~$\cat_\F(X,\emptyset)=\mult_\F(X,\emptyset)$.
\end{rem}

If $f \colon (Z,C) \to (X,A)$ is a homotopy equivalence of CW-pairs,
then pulling back fundamental group pairs and families of subgroups
along~$\pi_1(f)$ shows that $\cat_{\pi_1(f)^*\F}(Z,C) =
\cat_\F(X,A)$. In contrast, it is not clear whether $\mult_{\F}$ is
also a relative homotopy invariant.

%%%%%%%%%%%%%%%%%%%%%%%%%%%%%
\section{Simplicial volume, bounded cohomology, and acyclicity}\label{sec:prel:sv:bc:bac}

In this section we recall the notions of simplicial volume and bounded
cohomology. We also recall bounded acyclicity and we introduce a
uniform version of bounded acyclicity. In particular, we explain how
uniformly boundedly acyclic actions lead to computations of bounded
cohomology.  This is an adaptation of standard techniques in bounded
cohomology~\cite{Monod,MR:BAc}; similar results are also discussed in
recent computations of bounded cohomology groups~\cite{fffclmm2,
  monodnariman}.

%%%%%%%%%%%%
\subsection{Simplicial volume}

We recall the definition of simplicial volume~\cite{vbc}.  Let $(X,
A)$ be a pair of spaces. For every singular $n$-chain $c = \sum_{i =
  1}^k a_i \sigma_i \in C_n(X, A; \R)$, written in reduced form, we
define the $\ell^1$-\emph{norm} as follows:
$$
| c |_1 \coloneqq \sum_{i = 1}^k |a_i| \,.
$$
The restriction of the $\ell^1$-norm to the subspace of relative
cycles induces a quotient $\ell^1$-seminorm (denoted
by~$\|\cdot\|_1$) on the homology group~$H_n(X, A; \R)$.

\begin{defn}[Relative simplicial volume]
  Let $M$ be an oriented connected compact $n$-manifold with (possibly
  non-empty) boundary.  Then the \emph{relative simplicial volume} of
  $M$ is
  $$
  \sv{M, \partial M} \coloneqq \bigl\| [M, \partial M] \bigr\|_1 \,,
  $$
  where $[M, \partial M] \in H_n(M, \partial M; \R) \cong \R$ denotes
  the relative fundamental class of $M$.
\end{defn}

\begin{ex}\label{ex:relative:sv}
  Let $M$ be an oriented compact connected $n$-manifold.
  \begin{enumerate}[label=\enum]
  \item If $M^\circ$ admits a complete finite-volume hyperbolic
    metric, then we have $\sv{M, \partial M} = \textup{vol}(M) \slash
    v_n$~\cite{vbc, FujiwaraManning};

  \item If $M$ is a handlebody of genus $g \geq 2$, then $\sv{M,
    \partial M} = 3 \cdot (g -1)$~\cite{BFP};

  \item If $M = \Sigma_g \times I$, where $\Sigma_g$ is a surface of
    genus $g \geq 2$, then we have $\sv{M, \partial M} = \frac{5}{4} \cdot
    \sv{\partial M}$~\cite{BFP};
  \item 
    The simplicial volume of graph manifolds is
    zero~\cite{Soma, vbc}.
  \item
    If $M$ admits a self-map~$f$ with~$\mathopen|\deg(f)\mathclose| \geq 2$,
    then~$\sv{M,\partial M} = 0$~\cite{vbc}.
  \item
    If $M$ is closed and admits an open cover by amenable subsets of
    multiplicity at most~$\dim (M)$, then $\| M \| = 0$~\cite{vbc}.
    Many examples are known to satisfy this
    condition~\cite[Section~1.1]{loehmoraschinisauer}.
  \end{enumerate}
  Further computations of simplicial volume are surveyed in the
  literature~\cite{LMR}.
\end{ex}

We also recall the locally finite version of simplicial volume
for non-compact manifolds~\cite{vbc, Loeh, FM:Grom}.  Given a
topological space $X$, a (possibly infinite) real singular
$n$-chain~$c = \sum_{\sigma \in \map(\Delta^n, X)} a_\sigma \sigma$ is
\emph{locally finite} if every compact subset of~$X$ intersects only
finitely many simplices with non-trivial coefficient. We define
$C^{\lf}_*(X; \R)$ as the $\R$-module of locally finite chains
on~$X$. The usual boundary operator for finite chains admits a canonical extension
to~$C^{\lf}_*(X;\R)$.  The \emph{locally finite
  homology}~$H_*^{\lf}(X; \R)$ of $X$ is the homology of the complex
$C_*^{\lf}(X; \R)$.

As in the finite case, the $\ell^1$-norm of a locally finite chain $c
= {\sum_{\sigma \in \map(\Delta^n, X)} a_\sigma \sigma}$
in~$C_n^{\lf}(X; \R)$ is given by
$$
| c |_1 \coloneqq \sum_{\sigma \in \map(\Delta^n, X)} |a_\sigma| \in [0, +\infty] \,.
$$
As before, this norm induces an $\ell^1$-seminorm $\| \cdot \|_1$
on~$H_n^{\lf}(X; \R)$.

\begin{defn}[Locally finite simplicial volume]
  Let $M$ be an oriented (possibly non-compact) connected $n$-manifold
  without boundary.  The \emph{locally finite simplicial volume}
  of~$M$ is defined by
  $$
  \sv{M}_{\lf} \coloneqq \bigl\| [M]_{\lf} \bigr\|_1 \,,
  $$
  where $[M]_{\lf} \in H_n^{\lf}(M; \R) \cong \R$ denotes the locally
  finite fundamental class.
\end{defn}

  The [locally finite] simplicial volume can be defined for
  every normed ring $R$. In this case, we will consider the
  $\ell^1$-seminorm on $H^{[\lf]}_*(-; R)$ and we will talk about
  [\emph{locally finite}] $R$-\emph{simplicial volume} $\sv{\cdot}_{R,
    [\lf]}$.

%%%%%%%%%%%%%%%
\subsection{Bounded cohomology}

We recall the definition of bounded cohomology of groups and
spaces~\cite{vbc,ivanov,Monod,Frigerio} as well as its equivariant
version~\cite{Loeh-Sauer19,Li21}.

For a group~$G$ and a normed $\R G$-module~$V$, we write
\[ C_b^*(G;V) \coloneqq \linf(G^{*+1},V)^G
\]
(equipped with the simplicial coboundary operator) for the
\emph{bounded cochain complex of~$G$ with coefficients in~$V$}.

\begin{defn}[Bounded cohomology of groups]
  The \emph{bounded cohomology of~$G$ with coefficients in~$V$} is
  defined by
  \[ H_b^*(G;V) \coloneqq H^*\bigl( C_b^*(G;V) \bigr) \,.
  \]
\end{defn} 

Similarly, if $(X, A)$ is a topological pair, we can consider the
singular cochain complex
$$
C^*(X, A; \R) \coloneqq\bigl\{f \in C^*(X; \R)
\bigm| \text{$f(\sigma) = 0$ for all $\sigma$ supported in $A$}
\bigr\} \,,
$$
where a singular $n$-simplex $\sigma$ is \emph{supported in~$A$}
if $\sigma(\Delta^n) \subset A$ \,.
We can restrict to the subcomplex of bounded cochains:
$$
C_b^*(X, A; \R) \coloneqq
\bigl\{f \in C^*(X, A; \R)
\bigm| \text{$\textstyle\sup_{\sigma \in \map(\Delta^*,X)} |f(\sigma)| < \infty$}
\bigr\} \,.
$$

\begin{defn}[Bounded cohomology of spaces]
  Let $(X,A)$ be a pair of spaces.  The \emph{bounded cohomology
    of~$(X, A)$} (with real coefficients) is defined by
  $$
  H_b^*(X, A;\R) \coloneqq H^*\bigl( C_b^*(X, A;\R) \bigr) \,.
  $$
\end{defn}

The inclusion of complexes $C_b^*(X, A; \R) \hookrightarrow C^*(X, A;
\R)$ induces a natural map from bounded cohomology to ordinary
cohomology, the \emph{comparison map}:
$$
\comp_{(X, A)}^* \colon H_b^*(X, A; \R) \to H^*(X, A; \R) \,.
$$

The connection between bounded cohomology and simplicial volume is
encoded in the following classical result:

\begin{prop}[{Duality principle, qualitative version~\cite{vbc, Frigerio}}]\label{duality:principle}
  Let $M$ be an oriented connected compact $n$-manifold with (possibly
  empty) boundary.  Then the following are equivalent:
  \begin{enumerate}[label=\enum]
  	\item $\sv{M, \partial M} > 0$;
	\item The comparison map $\comp_{(M, \partial M)}^n$ is surjective.
  \end{enumerate}
\end{prop}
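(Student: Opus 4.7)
The plan is to combine the one-dimensionality of top relative cohomology with the classical $\ell^1/\ell^\infty$ duality between (relative) homology and bounded cohomology. First, since $M$ is an oriented compact connected $n$-manifold, both $H_n(M,\partial M;\R)$ and $H^n(M,\partial M;\R)$ are one-dimensional, and the Kronecker pairing evaluated on~$[M,\partial M]$ gives an isomorphism $H^n(M,\partial M;\R)\to \R$. In particular, a cohomology class $\beta\in H^n(M,\partial M;\R)$ is non-zero if and only if $\langle\beta,[M,\partial M]\rangle\neq 0$. Because $H^n(M,\partial M;\R)\cong \R$, the comparison map $\comp^n_{(M,\partial M)}$ is surjective if and only if it is non-zero, i.e.\ if and only if there exists a bounded cocycle $\varphi\in C^n_b(M,\partial M;\R)$ with $\langle \varphi,[M,\partial M]\rangle\neq 0$ (where the pairing is the evaluation of $\varphi$ on any representing relative cycle, which is well-defined since $\varphi$ is a relative cocycle).

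Next I would relate this to the simplicial volume via the standard duality argument. The easy direction: if some bounded relative cocycle $\varphi$ satisfies $\langle \varphi,[M,\partial M]\rangle=c\neq 0$ and $z$ is any relative cycle representing~$[M,\partial M]$, then $|c|=|\varphi(z)|\le \|\varphi\|_\infty\cdot |z|_1$, so $|z|_1\ge |c|/\|\varphi\|_\infty>0$ and hence $\|M,\partial M\|>0$. For the reverse direction, suppose $\|M,\partial M\|>0$. Then in the normed vector space $C_n(M;\R)$ with the $\ell^1$-norm, the affine subspace of relative cycles representing $[M,\partial M]$ has positive distance from the closure of $\partial C_{n+1}(M;\R)+C_n(\partial M;\R)$. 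Hahn--Banach then produces a bounded linear functional $\varphi$ on $C_n(M;\R)$ that vanishes on $\partial C_{n+1}(M;\R)+C_n(\partial M;\R)$ and is non-zero on any representing cycle. This $\varphi$ is precisely a bounded relative $n$-cocycle with $\langle \varphi,[M,\partial M]\rangle\neq 0$.

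Combining both directions with the reformulation of surjectivity from the first paragraph yields the desired equivalence. The main obstacle is the Hahn--Banach step for the non-trivial direction; one must be careful that the functional produced vanishes not only on boundaries (ensuring it is a cocycle) but also on chains supported in $\partial M$ (ensuring it is a \emph{relative} cochain), which is why one separates off $C_n(\partial M;\R)$ from the start. Once this is in place, the proof proceeds by essentially formal pairing arguments.
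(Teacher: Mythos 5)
Your proof is correct and is precisely the classical $\ell^1/\ell^\infty$ duality argument (one-dimensionality of $H^n(M,\partial M;\R)$, the easy pairing estimate, and Hahn--Banach applied to the subspace $\partial C_{n+1}(M;\R)+C_n(\partial M;\R)$) found in the cited sources; the paper itself states this proposition without proof, referring to Gromov and Frigerio. No gaps: in particular you correctly arranged for the separating functional to vanish on $C_n(\partial M;\R)$ so that it is a \emph{relative} bounded cocycle.
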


We also recall the equivariant version of
bounded cohomology~\cite[Definition~5.1]{Loeh-Sauer19}:

\begin{defn}[{Equivariant [bounded] cohomology}]\label{defn:equivariant bounded cohomology}
  Let $Y$ be a~$G$-space and let~$C_*(Y; \R)$ denote the singular chain
  complex.  For coefficients in a [normed]~$\IR G$-module~$V$, we
  define the cochain complex
  \[
  C^*_G(Y;V)\coloneqq \Hom_{\IR G}\bigl(C_*(Y;\IR),V\bigr)
  \]
  and the subcomplex~$C^*_{G,b}(Y;V)\subset C^*_G(Y;V)$ consisting of
  [bounded]~$\IR G$-homo\-mor\-phisms. Then we set
  \begin{align*}
    H^n_G(Y;V) &\coloneqq H^n\bigl(C^*_G(Y;V)\bigr) \,; \\
    H^n_{G,b}(Y;V) &\coloneqq H^n\bigl(C^*_{G,b}(Y;V)\bigr) \,.
  \end{align*}
  For a pair of~$G$-spaces~$(Y,B)$ one similarly
  defines~$H^n_G(Y,B;V)$ and~$H^n_{G,b}(Y,B;V)$.
  As in the absolute case, there is a \emph{comparison map}
  \[
  	\comp^n_{G,(Y,B)}\colon H^n_{G,b}(Y,B;V)\to H^n_G(Y,B;V) \,.
  \]
\end{defn}

We have the following induction isomorphisms:

\begin{lem}\label{lem:induction isomorphism}
  Let $H$ be a subgroup of~$G$, let $B$ be an~$H$-space, and let $n
  \in \N$.  Then there are natural isomorphisms of $\R$-vector spaces:
  \begin{align*}
    H^n_G(G\times_H B;\IR) &\xrightarrow{\cong} H^n_H(B;\IR) \,; \\
    H^n_{G,b}(G\times_H B;\IR) &\xrightarrow{\cong} H^n_{H,b}(B;\IR) \,. 
  \end{align*}
\end{lem}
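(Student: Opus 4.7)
The plan is to establish a natural chain-level isomorphism realising $C_*(G\times_H B;\R)$ as the induced module $\R G \otimes_{\R H} C_*(B;\R)$, and then pass to cohomology via Hom-tensor adjunction. The key structural observation is that the quotient map $G\times B \to G\times_H B$ identifies $G\times_H B$ with the disjoint union $\coprod_{gH\in G/H} g\cdot[e,B]$ of copies of $B$, because the right $H$-action on $G$ is free. Since $\Delta^n$ is connected, every singular simplex $\tau\colon \Delta^n \to G\times_H B$ takes values in exactly one sheet, so $\tau = g\cdot [e,\sigma]$ for a unique coset~$gH$ and a simplex~$\sigma\colon \Delta^n\to B$. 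Checking that the assignment $g\otimes\sigma\mapsto g\cdot[e,\sigma]$ respects the $H$-balancing relation (which amounts to the identity $[gh,\sigma] = [g,h\sigma]$ in $G\times_H B$) gives a natural $\R G$-chain isomorphism
\[
\Phi\colon \R G \otimes_{\R H} C_*(B;\R) \xrightarrow{\ \cong\ } C_*(G\times_H B;\R)\,.
\]

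Applying~$\Hom_{\R G}(\args,\R)$ and using the standard Hom-tensor adjunction
\[
\Hom_{\R G}\bigl(\R G \otimes_{\R H} C_*(B;\R),\R\bigr) \cong \Hom_{\R H}\bigl(C_*(B;\R),\R\bigr)
\]
yields a natural isomorphism of cochain complexes~$C^*_G(G\times_H B;\R) \cong C^*_H(B;\R)$, whence the first claimed isomorphism after passing to cohomology. Explicitly, the adjunction sends $\phi\in C^*_G(G\times_H B;\R)$ to the $H$-cochain $\psi$ with $\psi(\sigma) = \phi([e,\sigma])$, and sends $\psi$ back to the $G$-cochain $\phi$ determined by $\phi(g\cdot[e,\sigma]) = \psi(\sigma)$ (well-defined by the uniqueness of the sheet decomposition and $G$-equivariance).

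The only point that is specific to the bounded setting is verifying that this adjunction restricts to bounded cochains, i.e., that~$\Phi$ is an isometry in the induced norms. One inequality is immediate: $\|\psi\|_\infty \le \|\phi\|_\infty$ because every value of $\psi$ is a value of~$\phi$. The reverse inequality is the main (mild) point: every simplex $\tau$ in $G\times_H B$ is $G$-equivalent to one of the form $[e,\sigma]$, so by $G$-invariance $|\phi(\tau)| = |\phi([e,\sigma])| = |\psi(\sigma)| \le \|\psi\|_\infty$, giving $\|\phi\|_\infty \le \|\psi\|_\infty$. Hence the adjunction identifies the bounded subcomplexes $C^*_{G,b}(G\times_H B;\R)$ and $C^*_{H,b}(B;\R)$, and taking cohomology produces the second isomorphism. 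Naturality of both isomorphisms is inherited from the naturality of~$\Phi$ and of the Hom-tensor adjunction.
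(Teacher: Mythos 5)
Your proof is correct and follows essentially the same route as the paper: identify $C_*(G\times_H B;\R)$ with the induced module $\R G\otimes_{\R H}C_*(B;\R)$ using that simplices land in a single sheet, apply the Hom-tensor adjunction, and observe that the resulting cochain isomorphism restricts to the bounded subcomplexes. Your explicit verification that the adjunction is an isometry on bounded cochains is a welcome elaboration of a step the paper leaves implicit.
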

\begin{proof}
  Let $C_*(-)$ denote the singular chain complexes (with real
  coefficients).  Since the induced $G$-space~$G\times_H B$ consists of disjoint copies
  of~$B$, the image of a singular simplex in~$G\times_H B$ is
  contained in a single copy.  Hence we have a natural
  isomorphism~$C_*(G\times_H B)\cong \IR G\otimes_{\IR H}C_*(B)$ of
  $\IR G$-chain complexes and thus an adjunction isomorphism
  \[
  \Phi\colon C^*_G(G\times_H B;\IR)\xrightarrow{\cong} C^*_H(B;\IR) \,.
  \]
  This yields the isomorphism on equivariant cohomology.  The
  claim on equivariant bounded cohomology follows by observing
  that $\Phi$ restricts to an isomorphism
  $
  C^*_{G,b}(G\times_H B;\IR)\to C^*_{H,b}(B;\IR) 
  $ 
  on the subcomplexes of bounded cochains.
\end{proof}

%%%%%%%%%%%%%%%%
\subsection{Bounded acyclicity}

We recall the definition of bounded acyclicity for modules and
groups:

\begin{defn}[Boundedly acyclic group]\label{defn:BAc module}
  Let $G$ be a group and let $n \in \N$.  A normed~$\IR G$-module~$V$
  is \emph{boundedly~$n$-acyclic} if~$H^k_b(G;V)\cong 0$ for all~$k\in
  \{1,\ldots,n\}$.  A normed~$\IR G$-module~$V$ is \emph{boundedly
    acyclic} if~$H^k_b(G;V)\cong 0$ for all~$k\in \IN_{\geq 1}$.
	
  The group~$G$ is \emph{boundedly~$n$-acyclic}
  [resp.~\emph{boundedly acyclic}] if the trivial~$\IR
  G$-module~$\IR$ is boundedly~$n$-acyclic [resp.~boundedly acyclic].
\end{defn}

Amenable groups are boundedly acyclic; by now, there is a wide range
of known examples of non-amenable boundedly acyclic groups, including
finitely presented
examples~\cite{matsumotomorita,loeh_bcd,fffclmm,fffclmm2,monodnariman,monodthompson}.

Resolutions by boundedly acyclic modules can be used to compute 
bounded cohomology~\cite[Proposition~2.5.4 and Remark~2.5.5]{MR:BAc}:

\begin{thm}[Fundamental lemma for boundedly acyclic resolutions~\cite{MR:BAc}]
  \label{thm:fundamental lemma}
  Let $G$ be a group and let $n \in \N$.  Let $0\to V\to V^*$ be a
  resolution of normed~$\IR G$-modules
  such that $V^j$ is a dual normed $\R G$-module and
  boundedly~$(n-j)$-acyclic for every~$j \in \{0,\dots,n-1\}$.  Then
  there is a canonical isomorphism (of $\R$-vector spaces)
  \[
  H^k(V^*{}^G)\xrightarrow{\cong} H^k_b(G;V)
  \]
  for all~$k \in \{0,\dots,n\}$ and a canonical injective map
  \[
  H^{n+1}(V^*{}^G)\into H^{n+1}_b(G;V) \,.
  \]
  Moreover, if the given resolution is strong, then these
  maps are the ones induced by the canonical $G$-cochain
  homotopy class~$V^* \to \linf(G^{*+1},V)$.
\end{thm}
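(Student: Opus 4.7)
The plan is to argue by induction on~$n$. The base case $n = 0$ is immediate: left exactness of the $G$-invariants functor applied to $0 \to V \to V^0 \to V^1$ gives $V^G = \ker(V^{0G} \to V^{1G}) = H^0(V^{*G})$, which agrees with $H^0_b(G;V) = V^G$.

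For the inductive step, split the resolution via the short exact sequence of normed $\R G$-modules
\[
0 \to V \to V^0 \to V^0/V \to 0,
\]
where $V^0/V$ is identified with the $G$-submodule $\mathrm{im}(V^0 \to V^1) \subset V^1$. The associated long exact sequence in bounded cohomology of~$G$, combined with bounded $n$-acyclicity of~$V^0$ (so $H^k_b(G;V^0) = 0$ for $1 \leq k \leq n$), produces an isomorphism $(V^0/V)^G / V^{0G} \cong H^1_b(G; V)$, dimension-shifting isomorphisms $H^{k-1}_b(G;V^0/V) \cong H^k_b(G;V)$ for $2 \leq k \leq n$, and an injection $H^n_b(G;V^0/V) \hookrightarrow H^{n+1}_b(G;V)$. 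Then apply the induction hypothesis to the truncated resolution $0 \to V^0/V \to V^1 \to V^2 \to \cdots$, in which $V^{j+1}$ is boundedly $((n-1)-j)$-acyclic for $j \in \{0, \ldots, n-2\}$: this yields isomorphisms $H^k_b(G; V^0/V) \cong H^k((V^{\ast \geq 1})^G)$ for $k \in \{0, \ldots, n-1\}$ and an injection for $k = n$. Identifying $H^k((V^{\ast \geq 1})^G)$ with~$H^{k+1}(V^{*G})$ for $k \geq 1$ and with~$\ker(V^{1G} \to V^{2G}) = (V^0/V)^G$ for $k = 0$, and splicing these with the LES identifications above, gives the claimed isomorphisms in degrees $\leq n$ and the injection in degree~$n+1$.

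For the canonicity statement, the strongness assumption provides a bounded $\R$-linear contracting homotopy for~$V^*$. The standard extension lemma of Ivanov--Monod type then produces a $G$-cochain map $V^* \to \ell^\infty(G^{*+1}, V)$ extending~$\id_V$, unique up to $G$-cochain homotopy. Naturality of the long exact sequence in bounded cohomology, applied to the morphism of short exact sequences from $0 \to V \to V^0 \to V^0/V \to 0$ to the corresponding sequence obtained from the bar resolution, together with a downward induction on~$n$, identifies the inductively constructed iso/injection with the map induced by this canonical $G$-cochain class.

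The main technical obstacle is justifying that the short exact sequence of normed $\R G$-modules above actually yields a long exact sequence in bounded cohomology, since $\ell^\infty(G^{*+1}, -)$ need not be exact on arbitrary short exact sequences of normed modules. Here the hypothesis that $V^0$ is a dual normed module is crucial: it ensures that the sequence is admissible in the sense that the bar complex produces a short exact sequence of bounded cochain complexes. Apart from this point, the remainder of the argument---degree bookkeeping in the induction and the naturality diagrams for the canonical map---is essentially routine.
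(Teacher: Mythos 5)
The paper does not actually prove Theorem~\ref{thm:fundamental lemma}; it is imported verbatim from~\cite{MR:BAc}, so your argument can only be judged on its own terms. Your dimension-shifting strategy is the natural classical one, and the degree bookkeeping in the inductive step is correct: which flanking terms of the long exact sequence must vanish, how the truncated resolution $0\to V^0/V\to V^{*\ge 1}$ inherits the hypotheses with parameter $n-1$, and how the two injections compose in degree~$n+1$ all check out.

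The genuine gap is exactly the point you flag and then dismiss in one sentence: the existence of the long exact sequence for $0\to V\to V^0\to V^0/V\to 0$. This requires the sequence of invariant bounded cochain complexes
\[
0\to C_b^*(G;V)\to C_b^*(G;V^0)\to C_b^*(G;V^0/V)\to 0
\]
to be exact. Exactness on the right amounts to lifting bounded functions along $V^0\to V^0/V$ with a uniform norm bound; since you give $V^0/V=\im(d^0)$ the subspace norm from~$V^1$, this needs the quotient seminorm on~$V^0/\ker(d^0)$ to be dominated, up to a constant, by that subspace norm. Exactness in the middle needs the augmentation $V\to V^0$ to be bounded below. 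Neither condition holds for general normed (non-complete) modules, and neither is implied by $V^0$ being a dual module: an open-mapping argument is unavailable because $V$, and $V^1$ in the case $n=1$, need not be Banach, and even in the Banach case the comparison of norms has to be made. What \emph{does} supply both bounds is a bounded contracting homotopy~$s^*$: then $s^1$ restricted to~$\im(d^0)$ is a bounded section of~$d^0$, $s^0$ restricted to~$\ker(d^0)$ inverts the augmentation boundedly, and $d^0\circ s^1$ makes the truncated resolution strong again, so the induction propagates. In other words, your argument essentially proves the theorem under the ``moreover'' hypothesis of strongness (which is the case used everywhere in this paper), but not the unconditional isomorphism that the statement asserts; the unconditional case is precisely where the analytic content of~\cite{MR:BAc} lies and where your proof is incomplete. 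Two smaller points: your base case only establishes the degree-zero isomorphism, whereas the inductive step also consumes the degree-one injection of the case $n=0$ --- itself a nontrivial claim resting on the low-degree part of the same long exact sequence; and the canonicity of the spliced maps (independence of the choices entering the connecting homomorphisms) is asserted rather than verified.
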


%%%%%%%%%%%%
\subsection{Uniform bounded acyclicity}\label{subsec:UBAc collections}

To formulate uniform bound\-ed acyclicity for collections of groups, 
we need additional control
on the norms of primitives. This can be expressed in terms of the
uniform boundary condition (see Appendix~\ref{appx:ubc} for a
definition and some properties).
More precisely, we use the \emph{uniform uniform boundary condition}
denoted by~$\UUBC$ (Definition~\ref{defn:UUBC}).

\begin{defn}[Uniformly boundedly acyclic collection of groups]\label{def:ubacgroup}
  A collection~$\calG$ of groups is \emph{uniformly boundedly acyclic}
  if
  \begin{itemize}
  \item all members of~$\calG$ are boundedly acyclic and
  \item the collection~$(C_b^*(H;\R))_{H \in \calG}$
    satisfies~$\UUBC^k$ for all~$k \in \N$.
  \end{itemize}
  Similarly, for~$n \in \N$, we define \emph{uniformly 
  boundedly~$n$-acyclic} collections of groups if the previous conditions are
  satisfied up to degree~$n$.
  Moreover, we extend these definitions to sets of groups.
\end{defn}

For example, all collections consisting of amenable groups 
are uniformly boundedly acyclic (Example~\ref{ex:amenable UUBC}). 
Also, all \emph{finite} collections of boundedly acyclic groups
are uniformly boundedly acyclic (Example~\ref{exa:uubcgroup}).

\begin{prop}\label{prop:ubacshapiro}
  Let $n \in \N$, let $G$ be a group, let $(H_i)_{i\in I}$ be a
  uniformly boundedly~$n$-acyclic collection of subgroups of~$G$, and
  let $k \in \{1,\dots,n\}$. Then
  \[ H_b^k\bigl(G; \linf(G/H_i,\R) \bigr) \cong 0
  \]
  for all~$i \in I$ and the collection~$(C_b^*(G;\linf(G/H_i,\R)))_{i \in
    I}$ satisfies~$\UUBC^k$.
\end{prop}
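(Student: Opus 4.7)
The plan is a cochain-level Shapiro argument coupled with uniform norm control on the resulting chain-homotopy equivalence.

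\textbf{Cochain-level Shapiro and vanishing.}
For each subgroup~$H \leq G$, the evaluation map
\[
  F \longmapsto \Bigl( (g_0,\dots,g_n) \mapsto F(g_0,\dots,g_n)(H) \Bigr)
\]
yields an isometric isomorphism of normed cochain complexes
\[
  C_b^*\bigl( G; \linf(G/H, \R) \bigr) \xrightarrow{\;\cong\;} \linf(G^{*+1}, \R)^H,
\]
where $H$ acts diagonally on~$G^{*+1}$; the inverse is $\tilde F \mapsto F$ with $F(g_0,\dots,g_n)(xH) := \tilde F(x^{-1}g_0,\dots,x^{-1}g_n)$, well-defined on cosets by the $H$-invariance of~$\tilde F$, and both maps intertwine the bar differentials. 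Viewed as an $H$-module via the inclusion $H \subseteq G$, the complex $\linf(G^{*+1}, \R)$ is a strong resolution of~$\R$ (the standard contracting homotopy is bounded), whose terms are dual normed $H$-modules and are boundedly acyclic as $H$-modules (being relatively injective over~$H$, e.g.\ since each $\linf(G^{n+1},\R)$ decomposes as a product of modules $\linf(H/K,\R)$ along $H$-orbits). Theorem~\ref{thm:fundamental lemma} therefore identifies $H^*(\linf(G^{*+1},\R)^H) \cong H_b^*(H;\R)$, which, combined with the cochain-level identification above, is the Shapiro isomorphism $H_b^*(G;\linf(G/H_i,\R))\cong H_b^*(H_i;\R)$. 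This vanishes in degrees $1,\dots,n$ by bounded $n$-acyclicity of each~$H_i$.

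\textbf{Uniform chain-homotopy equivalence.}
For the $\UUBC^k$ statement I would build, for each~$i \in I$, an $H_i$-equivariant chain map
\[
  r_i \colon \linf(G^{*+1},\R) \longrightarrow \linf(H_i^{*+1},\R)
\]
given by restriction to $H_i^{*+1} \subseteq G^{*+1}$ (of operator norm~$\leq 1$ in every degree), together with an $H_i$-chain map~$s_i$ in the opposite direction extending~$\id_\R$ and $H_i$-chain homotopies $s_i r_i \simeq \id$ and $r_i s_i \simeq \id$. Both $s_i$ and the homotopies arise from the inductive construction in the fundamental lemma, using only the relative injectivity of each $\linf(G^{n+1},\R)$ as an $H_i$-module. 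Since each inductive lift has operator norm at most the norm of the datum being lifted, starting from~$\id_\R$ one obtains \emph{absolute} constants $C_k, C_k' \geq 1$ depending only on the cohomological degree~$k$ with $\|s_i^k\| \leq C_k$ and $\|h_i^k\| \leq C_k'$, \emph{independently} of~$i$.

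\textbf{Transfer of UUBC.}
Given a coboundary $c \in \linf(G^{k+1},\R)^{H_i}$, the image $r_i(c) \in C_b^k(H_i;\R)$ is a coboundary with $|r_i(c)|\leq|c|$; by $\UUBC^k$ for $(C_b^*(H_i;\R))_{i\in I}$ with constant~$K_k$, there is a primitive~$e$ with $|e|\leq K_k|c|$. From the homotopy relation $\delta h_i^k + h_i^{k+1}\delta = s_i r_i - \id$ one checks that $s_i(e) - h_i^k(c)$ is a primitive of~$c$ of norm at most $(C_{k-1} K_k + C_k') |c|$. Transporting via the cochain-level Shapiro isomorphism yields $\UUBC^k$ for $(C_b^*(G;\linf(G/H_i,\R)))_{i\in I}$ with a constant independent of~$i$. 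The main obstacle is the uniformity assertion in the second step: carefully tracking the inductive norm bounds to verify that $C_k, C_k'$ are truly $H_i$-independent. This is bookkeeping rather than a conceptual hurdle, but it is the one place where the \emph{uniform} bounded acyclicity hypothesis on $(H_i)_{i\in I}$ genuinely enters.
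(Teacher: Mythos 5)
Your overall architecture matches the paper's: make the Shapiro isomorphism explicit at the cochain level, produce a degree-wise bounded chain homotopy equivalence between $C_b^*(G;\linf(G/H_i,\R))$ and $C_b^*(H_i;\R)$, and then invoke the homotopy inheritance of $\UBC$ (Proposition~\ref{prop:ubchinherit}) together with the $\UUBC^k$ hypothesis on $(C_b^*(H_i;\R))_{i\in I}$. The vanishing statement and the transfer step in your third paragraph are both correct.

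The gap is exactly where you flag it, and it is not mere bookkeeping. You construct $r_i$ explicitly (restriction has norm $\le 1$), but you obtain $s_i$ and the homotopies from ``the inductive construction in the fundamental lemma'' and then assert absolute, $i$-independent bounds $C_k, C_k'$. First, Theorem~\ref{thm:fundamental lemma} as stated produces a cohomology isomorphism, not a chain homotopy inverse together with a norm-controlled homotopy; you would need the comparison theorem for strong resolutions by relatively injective modules, which is a different piece of machinery. Second, even granting that machinery, the inductive lifts are built from the relative injectivity splittings of the $H_i$-modules $\linf(G^{k+1},\R)$, and without an explicit choice it is not automatic that the resulting operator norms are uniform in $i$. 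Since the uniformity is precisely the non-trivial content of the $\UUBC^k$ claim, leaving it as an unverified assertion is a genuine gap in the argument.

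The paper sidesteps this entirely by choosing a section of $G\to G/H_i$, identifying $G\cong J\times H_i$ as $H_i$-sets, and writing down explicit maps: $\psi^*$ (evaluation at a basepoint of $J$ in every coordinate, your $r_i$), $\varphi^*$ (projection to the $H_i$-coordinates, the role of your $s_i$), satisfying $\psi^*\circ\varphi^*=\id$ on the nose, together with an explicit prism-type cochain homotopy between $\varphi^*\circ\psi^*$ and $\id$ of norm $k$ in degree $k$. All three maps have closed formulas whose norms are visibly $1$, $1$, and $k$, with no dependence on $H_i$. If you replace your abstract $s_i$ by the explicit ``forget the $J$-coordinate'' map and write out the standard prism homotopy, your argument closes; as written, the uniformity assertion is the one step that would fail to survive a careful referee.
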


\begin{proof}
  This is a boundedly controlled version of the Shapiro lemma: By the
  Shapiro lemma in bounded
  cohomology~\cite[Proposition~10.1.3]{Monod}, we have
  \[ H_b^k\bigl(G; \linf(G/H_i,\R) \bigr) \cong H_b^k(H_i;\R) \cong 0
  \]
  for all~$i \in I$ and all $k \in \{1, \dots, n\}$. In order to
  conclude that $(C_b^*(G;\linf(G/H_i,\R)))_{i \in I}$
  satisfies~$\UUBC^k$, we make the proof of the Shapiro lemma more
  explicit:

  Let $H \subset G$ be a subgroup of~$G$. Then there is a non-empty
  set~$J$ such that $G$, as an $H$-space, is isomorphic to~$J \times
  H$ (with the translation action on the $H$-factor). Therefore, on
  the one hand, we obtain isometric isomorphisms
  \begin{align*}
    C_b^*\bigl( G; \linf(G/H,\R)\bigr)
    & \cong \linf\bigl(G^{*+1}, \linf(G/H,\R)\bigr)^G
    \cong \linf\bigl(\res^G_H G^{*+1}, \R\bigr)^H
    \\
    & \cong \linf\bigl( (J \times H)^{*+1}, \R\bigr)^H
  \end{align*}
  of cochain complexes (each equipped with the simplicial coboundary
  operator). On the other hand, $C_b^*(H ;\R) = \linf(
  H^{*+1},\R)^H$.  Both sides are connected through mutually
  homotopy inverse cochain homotopy equivalences
  \[ \linf\bigl( H^{*+1},\R\bigr)^H \leftrightarrow \linf\bigl( (J \times H)^{*+1}, \R\bigr)^H
  \]
  given by (where $0 \in J$ is a chosen basepoint)
  \begin{align*}
    \varphi^* \colon 
    \linf\bigl( H^{*+1},\R\bigr)^H
    & \to \linf\bigl( (J \times H)^{*+1}, \R\bigr)^H
    \\
    f
    & \mapsto \bigl( ((i_0, h_0), \dots, (i_k,h_k)) \mapsto
    f(h_0, \dots, h_k) \bigr)
    \\
    \psi^* \colon 
    \linf\bigl( (J \times H)^{*+1}, \R\bigr)^H
    & \to \linf\bigl( H^{*+1},\R\bigr)^H
    \\
    f
    & \mapsto
    \bigl(
    (h_0, \dots, h_k) \mapsto
    f\bigl((0,h_0), \dots, (0,h_k) \bigr)
    \bigr)
    \,;
  \end{align*}
  these cochain maps have norm~$1$ in each degree.  Indeed, $\psi^*
  \circ \varphi^*$ is the identity on~$\linf(H^{*+1}, \R)^H$ and the
  standard map
  \begin{align*}
    \linf\bigl( (J \times H)^{*+1}, \R\bigr)^H
    & \to
    \linf\bigl( (J \times H)^{*}, \R\bigr)^H
    \\
    f & \mapsto
    \biggl( ((i_0, h_0), \dots, (i_{k-1}, h_{k-1}))\mapsto
    \\ & \quad\quad
    \sum_{j=0}^{k-1} (-1)^j \cdot f\bigl((i_0, h_0), \dots,
    (i_j,h_j), (0,h_j), \dots, (0,h_{k-1}) \bigr) \biggr)
  \end{align*}
  is a cochain homotopy between $\varphi^* \circ \psi^* $ and the
  identity on~$\linf((J\times H)^{*+1},\R)^H$, with norm~$k$ in
  degree~$k$.  In particular, all these norms are independent of the
  subgroup~$H \subset G$.

  Hence, the claim follows by applying these considerations and
  homotopy inheritance of~$\UBC$
  (Proposition~\ref{prop:ubchinherit}) to the subgroups~$(H_i)_{i \in
    I}$ of~$G$.
\end{proof}

%%%%%%%%%
\subsection{Uniformly boundedly acyclic actions}\label{subsec:UBAc modules}

Group actions with amenable stabilisers, so-called amenable actions,
have proved to be a valuable tool to compute bounded cohomology in
specific cases~\cite{Monod, burgermonodGAFA, BIuseful}. Similarly,
also uniformly boundedly acyclic actions allow us to compute
bounded cohomology.  This is an easy application of the fact that
bounded cohomology can be computed via acyclic
resolutions (Theorem~\ref{thm:fundamental lemma}).  However, usually,
in this approach we cannot compute the seminorm on bounded cohomology.

\begin{defn}[Uniformly boundedly acyclic action]
  A group action on a set is \emph{uniformly boundedly acyclic} if the
  collection of all stabilisers forms a uniformly boundedly acyclic
  collection of groups.  Similarly, for~$n \in \N$, we introduce the
  notion of \emph{uniformly boundedly~$n$-acyclic actions}.
\end{defn}

Uniformly boundedly acyclic actions lead to boundedly acyclic modules:

\begin{prop}\label{prop:stabacyc}
  Let $G$ be a group, let $n \in \N$, and let $G \actson S$ be a
  uniformly boundedly~$n$-acyclic action on a set~$S$. Then, for
  all~$k \in \{1,\dots, n\}$, we have
  \[ H_b^k\bigl( G ; \linf(S,\R) \bigr)
     \cong 0 \,.
  \]
\end{prop}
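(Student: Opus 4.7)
The plan is to reduce the statement to Proposition~\ref{prop:ubacshapiro} via the orbit decomposition of $S$, trading a single $\ell^\infty$-coefficient module on a $G$-set for an $\ell^\infty$-product indexed by orbit representatives. First I would decompose $S = \coprod_{i \in I} G \cdot s_i$ as a disjoint union of $G$-orbits, and set $H_i \coloneqq \Stab_G(s_i)$. Each orbit is $G$-equivariantly isomorphic to $G/H_i$, giving an isometric isomorphism of normed $\IR G$-modules
\[
  \linf(S,\IR) \;\cong\; \bprod{i \in I} \linf(G/H_i,\IR),
\]
where the right-hand side is the $\ell^\infty$-product (with supremum norm).

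Since the functor $\linf(G^{*+1},-)^G$ commutes with $\ell^\infty$-products (an $\ell^\infty$-valued bounded $G$-equivariant map into a product is the same as a bounded family of $G$-equivariant maps into the factors with uniformly bounded sup-norms), the previous isomorphism induces an isometric isomorphism of cochain complexes
\[
  C_b^*\bigl(G;\linf(S,\IR)\bigr) \;\cong\; \bprod{i \in I} C_b^*\bigl(G;\linf(G/H_i,\IR)\bigr).
\]
By assumption, the collection $(H_i)_{i \in I}$ of stabilisers is uniformly boundedly $n$-acyclic, so Proposition~\ref{prop:ubacshapiro} applies: in each degree $k \in \{1,\dots,n\}$, the cohomology of every factor vanishes, and the collection of factor complexes satisfies $\UUBC^k$ with some uniform constant~$K_k$.

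Finally, fix $k \in \{1,\dots,n\}$ and let $f$ be a cocycle in $C_b^k(G;\linf(S,\IR))$. Write $f = (f_i)_{i \in I}$ under the isomorphism above, so each $f_i$ is a cocycle in $C_b^k(G;\linf(G/H_i,\IR))$ with $\|f_i\|_\infty \le \|f\|_\infty$. Vanishing of cohomology in each factor means every $f_i$ is a coboundary, and $\UUBC^k$ allows us to pick primitives $g_i \in C_b^{k-1}(G;\linf(G/H_i,\IR))$ with $\|g_i\|_\infty \le K_k \cdot \|f_i\|_\infty \le K_k \cdot \|f\|_\infty$. Assembling $g \coloneqq (g_i)_{i \in I}$ gives a well-defined element of $C_b^{k-1}(G;\linf(S,\IR))$ with $\delta g = f$, so $H_b^k(G;\linf(S,\IR)) \cong 0$. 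The main (mild) obstacle is precisely the bookkeeping step: without uniform control on the norms of the primitives across $i$, the tuple $(g_i)_i$ need not lie in the $\ell^\infty$-product, so the $\UUBC^k$ conclusion of Proposition~\ref{prop:ubacshapiro} is doing the essential work here.
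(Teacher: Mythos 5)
Your proof is correct and follows essentially the same route as the paper: decompose $S$ into orbits, identify $\linf(S,\R)$ with the bounded product $\bprod{i\in I}\linf(G/H_i,\R)$, and apply Proposition~\ref{prop:ubacshapiro}. The only difference is that the paper packages your final bookkeeping step (assembling uniformly controlled primitives into the bounded product) as a citation of Theorem~\ref{thm:bprod}, whereas you prove the relevant special case inline; the content is identical.
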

\begin{proof}
  Without loss of generality, we may assume that $S = \coprod_{i \in
    I} G/H_i$ with the left translation action on each summand.  Using
  the uniform version of the Shapiro lemma
  (Proposition~\ref{prop:ubacshapiro}) and the compatibility with
  bounded products (Theorem~\ref{thm:bprod}), we obtain for every $k
  \in \{1, \dots, n\}$
  \begin{align*}
    0
    & \cong \bprod{i \in I} H_b^k\bigl(G; \linf(G/H_i,\R)\bigr)
    & \text{(Proposition~\ref{prop:ubacshapiro})}
    \\
    & \cong H^k \Bigl( \bprod{i \in I} C_b^*\bigl(G; \linf(G/H_i,\R)\bigr) \Bigr)
    & \text{(Theorem~\ref{thm:bprod})}
    \\
    & \cong H^k \Bigl( C_b^*\Bigl(G; \bprod{i \in I} \linf(G/H_i,\R)\Bigr)\Bigr)
    & \text{(direct computation)}
    \\
    & \cong H^k \Bigl( C_b^*\Bigl(G; \linf\Bigl(\coprod_{i \in I} G/H_i,\R\Bigr) \Bigr)
    & \text{(Example~\ref{exa:bprodcoprod})}
    \\
    & = H_b^k\bigl(G; \linf(S,\R)\bigr) \,,
  \end{align*}
  as claimed.
\end{proof}

\begin{cor}[Bounded cohomology via uniformly boundedly acyclic actions]\label{cor:bac:set:computes:bounded:cohomology}
  Let $G$ be a group, let $G \actson S$ be an action on a non-empty
  set~$S$.  Let $n \in \N_{> 0}$ and suppose that the diagonal action
  $G \actson S^n$ is uniformly boundedly $n$-acyclic.  Then the
  cohomology of the simplicial cochain complex~$\linf(S^{*+1},\R)^G$ is
  canonically isomorphic to $H^*_b(G; \R)$ in all degrees~$\leq n$ and
  there exists a canonical injective map
  $$
  H^{n+1}\bigl(\linf(S^{*+1},\R)^G\bigr) \hookrightarrow H^{n+1}(G; \R) \,.
  $$
  More precisely, every $G$-cochain map~$\linf(S^{*+1},\R)
  \to \linf(G^{*+1},\R)$ that is degree-wise bounded and extends~$\id_\R \colon \R \to \R$ induces an
  isomorphism \textup{[}resp.~injection\textup{]} $H^k(\linf(S^{*+1},\R)^G) \to H_b^k(G;\R)$ in the
  corresponding range for~$k$.
\end{cor}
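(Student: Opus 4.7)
The plan is to invoke the fundamental lemma for boundedly acyclic resolutions (Theorem~\ref{thm:fundamental lemma}) applied to the augmented simplicial $G$-cochain complex $0 \to \R \to \linf(S^{*+1},\R)$. To do so, three hypotheses must be verified.

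First, the augmentation is a strong resolution: fixing a basepoint $s_0 \in S$ (which exists since $S \neq \emptyset$), I would define $\R$-linear contracting maps $h^j \colon \linf(S^{j+1},\R) \to \linf(S^j,\R)$ by
\[
h^j(f)(s_1,\ldots,s_j) \coloneqq f(s_0, s_1, \ldots, s_j) \,,
\]
each of operator norm at most $1$. These assemble into a contracting cochain homotopy of the augmented simplicial complex. Second, each $\linf(S^{j+1},\R)$ is the topological dual of the normed $\R G$-module $\ell^1(S^{j+1},\R)$ (equipped with the diagonal $G$-action), hence is a dual normed $\R G$-module.

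The core step is to establish bounded $(n-j)$-acyclicity of $\linf(S^{j+1},\R)$ for every $j \in \{0,\ldots,n-1\}$. I would deduce the stronger statement that $G \actson S^{j+1}$ is uniformly boundedly $n$-acyclic by a padding argument: for every $(s_1,\ldots,s_{j+1}) \in S^{j+1}$ one has
\[
\Stab_G(s_1,\ldots,s_{j+1}) = \Stab_G(s_1,\ldots,s_{j+1},s_1,\ldots,s_1) \,,
\]
where the right-hand tuple, padded by copies of $s_1$, lies in $S^n$. Hence the collection of stabilisers of $G \actson S^{j+1}$ is the pullback, along the induced injection $S^{j+1} \hookrightarrow S^n$, of the collection of stabilisers of $G \actson S^n$, and uniform bounded $n$-acyclicity is inherited under such pullbacks because the same $\UUBC^k$ bounds remain in force. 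Proposition~\ref{prop:stabacyc} then yields $H_b^k(G; \linf(S^{j+1},\R)) \cong 0$ for $k \in \{1,\ldots,n\}$, and in particular for $k \in \{1,\ldots,n-j\}$.

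With all three hypotheses in hand, Theorem~\ref{thm:fundamental lemma} directly supplies canonical isomorphisms $H^k(\linf(S^{*+1},\R)^G) \xrightarrow{\cong} H_b^k(G;\R)$ in the degrees $k \in \{0,\ldots,n\}$ together with a canonical injection in degree $n+1$. The description of these maps via an arbitrary $G$-cochain map $\linf(S^{*+1},\R) \to \linf(G^{*+1},\R)$ that is degree-wise bounded and extends $\id_\R$ follows from the last sentence of Theorem~\ref{thm:fundamental lemma}: any such cochain map is $G$-cochain homotopic to the canonical one produced by the universal property of the strong resolution, and therefore induces the same map on cohomology. The main technical obstacle is the inheritance of uniform bounded acyclicity through the padding argument; the rest is a direct unpacking of the fundamental lemma.
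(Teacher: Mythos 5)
Your proposal is correct and follows essentially the same route as the paper: apply the fundamental lemma for boundedly acyclic resolutions (Theorem~\ref{thm:fundamental lemma}) to the strong resolution~$\linf(S^{*+1},\R)$ of~$\R$, using Proposition~\ref{prop:stabacyc} to verify bounded acyclicity of the modules~$\linf(S^{j+1},\R)$. The padding argument you spell out to see that the actions~$G\actson S^{j+1}$ for~$j+1\le n$ inherit uniform bounded acyclicity from~$G\actson S^n$ is a detail the paper leaves implicit, and you carry it out correctly.
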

\begin{proof}
  Bounded cohomology can be computed through boundedly acyclic
  resolutions (Theorem~\ref{thm:fundamental lemma}).  As $S$ is
  non-empty, $\linf(S^{*+1},\R)$ is a strong resolution of~$\R$ by normed
  $\R G$-modules.  Therefore, it suffices to notice that in the
  present situation the $\R G$-modules~$\linf(S^{k+1}, \R)$ are boundedly
  acyclic (Definition~\ref{defn:BAc module}) for every~$k\in \{0,\ldots,n-1\}$ by
  Proposition~\ref{prop:stabacyc}.
\end{proof}

\begin{rem}[Amenable actions]
  Proposition~\ref{prop:stabacyc} and
  Corollary~\ref{cor:bac:set:computes:bounded:cohomology} are
  analogous to the corresponding results for amenable
  actions~\cite{Monod}\cite[Section~4.9]{Frigerio}: 
  If the action of~$G$ on a set~$S$ is amenable, then the
  normed~$\IR G$-module~$\linf(S,\R)$ is relatively injective 
  and hence the cochain complex~$\linf(S^{*+1},\R)^G$
  computes~$H^*_b(G;\R)$.
\end{rem}

\begin{rem}[Bounded cohomology via alternating cochains]\label{rem:alt:computes:bc}
  Let $G$ be a group and let $G \actson S$ be an action on a non-empty
  set~$S$.  A bounded function $f \in \linf(S^k, \R)$ is
  \emph{alternating} if
  $$
  f\bigl(s_{\sigma(1)}, \dots, s_{\sigma(k)}\bigr) = \mbox{sign}(\sigma) \cdot f(s_1, \dots, s_k) 
  $$
  holds for every permutation~$\sigma \in \Sigma_k$ and all~$(s_1,
  \dots, s_k) \in S^k$.  We write
  $$\linf_{\alt}(S^{*+1}, \R) \subset \linf(S^{*+1}, \R)$$
  for the subcomplex of alternating functions, which is well-defined since the coboundary operator
  preserves being alternating.

  Let $n \in \N_{> 0}$ and suppose that the diagonal action $G \actson
  S^n$ is uniformly boundedly $n$-acyclic. Then also the cohomology
  of the simplicial cochain complex
  $$
  \linf_{\alt}(S,\R)^G \rightarrow \linf_{\alt}(S^2,\R)^G \rightarrow \linf_{\alt}(S^3, \R)^G \rightarrow \cdots
  $$
  is canonically isomorphic to $H^*_b(G; \R)$ in all degrees~$\leq n$ and 
  the canonical map 
  $
  H^{n+1}(\linf_{\alt}(S^{*+1},\R)^G) \to H^{n+1}(G; \R)
  $
  is injective.
  Indeed, by Corollary~\ref{cor:bac:set:computes:bounded:cohomology},
  we already know that the previous result holds for the
  non-alternating complex.  Moreover, the
  inclusion~$\linf_{\alt}(S^{*+1},\R) \hookrightarrow \linf(S^{*+1},\R)$
  induces an isomorphism on cohomology; this can be seen from the same
  computation as in the case of the
  complex~$\linf(G^{*+1},\R)$~\cite[Proposition~4.26]{Frigerio}.
\end{rem}

We conclude this section by showing that the computation of bounded
cohomology via alternating cochains of boundedly acyclic actions is
natural in the following sense.  This is analogous to the case of
amenable actions~\cite[Lemma~2.2]{BBFIPP}.

\begin{lem}\label{lemma:prel:induced:at:level:of:group:is:the:same}
  Let $i \colon H \to G$ be a group homomorphism. Let $H \actson S_H$
  and $G \actson S_G$ be actions on non-empty sets $S_H$ and~$S_G$,
  respectively. Let $\varphi \colon S_H \to S_G$ be an $i$-equivariant
  map.
  \begin{enumerate}[label=\enum]
  \item\label{item:UBAc action alt naturality}
    Then the following diagram commutes, where the horizontal arrows
    are the canonical maps (induced by restriction to a single orbit):
    $$
    \xymatrix{
      H^*\bigl(\linf_{\alt}(S_G^{*+1}, \R)^G\bigr) \ar[d]^{H^*(\varphi^{*+1})} \ar[r]
      & H^*_b(G; \R) \ar[d]^{H^{*}_b(i)}
      \\
      H^*\bigl(\linf_{\alt}(S_H^{*+1}, \R)^H\bigr) \ar[r]
      & H^*_b(H; \R) \,.
    }
    $$
  \item\label{item:UBAc action alt}
    Let $n \in \N_{> 0}$ and suppose that the diagonal actions $H
    \actson S_H^n$ and $G \actson S_G^n$ are uniformly boundedly
    $n$-acyclic.  Then the horizontal arrows are isomorphisms in all
    degrees~$\leq n$ and injective in degree~$n+1$.
\end{enumerate}
\end{lem}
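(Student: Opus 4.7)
The plan is to verify part~(i) at the cochain level by making the canonical maps explicit, and then to deduce part~(ii) directly from Remark~\ref{rem:alt:computes:bc}.

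For part~(i), I would first choose basepoints compatibly: pick any $s_0 \in S_H$ and set $t_0 \coloneqq \varphi(s_0) \in S_G$. These choices produce explicit representatives of the canonical maps: the $G$-equivariant orbit map $G^{*+1} \to S_G^{*+1}$, $(g_0,\dots,g_k) \mapsto (g_0 t_0,\dots,g_k t_0)$, induces a cochain map
\[
\alpha_G^* \colon \linf_{\alt}(S_G^{*+1}, \R)^G \to C_b^*(G;\R),
\qquad f \mapsto \bigl( (g_0,\dots,g_k) \mapsto f(g_0 t_0,\dots,g_k t_0) \bigr),
\]
and similarly $\alpha_H^*$ on the $H$-side using $s_0$. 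By the naturality clause in Corollary~\ref{cor:bac:set:computes:bounded:cohomology} (``every $G$-cochain map that is degree-wise bounded and extends~$\id_\R$'' induces the canonical map), $\alpha_G^*$ and $\alpha_H^*$ represent the horizontal arrows of the diagram.

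Next I would check that the square commutes already at the cochain level. For $f \in \linf_{\alt}(S_G^{*+1},\R)^G$ and $(h_0,\dots,h_k)\in H^{k+1}$, using the $i$-equivariance $i(h)\cdot\varphi(s) = \varphi(h\cdot s)$, one computes
\[
\bigl(C_b^k(i) \circ \alpha_G^k\bigr)(f)(h_0,\dots,h_k)
 = f\bigl(i(h_0)t_0,\dots,i(h_k)t_0\bigr)
 = f\bigl(\varphi(h_0 s_0),\dots,\varphi(h_k s_0)\bigr),
\]
while
\[
\bigl(\alpha_H^k \circ \varphi^{k+1}\bigr)(f)(h_0,\dots,h_k)
 = \varphi^{k+1}(f)(h_0 s_0,\dots,h_k s_0)
 = f\bigl(\varphi(h_0 s_0),\dots,\varphi(h_k s_0)\bigr).
\]
The two sides coincide, and passing to cohomology yields the commutative square in part~(i).

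Part~(ii) is then immediate: Remark~\ref{rem:alt:computes:bc} applied to the uniformly boundedly $n$-acyclic actions $G\actson S_G^n$ and $H\actson S_H^n$ states precisely that the canonical map to $H_b^*(G;\R)$ (resp.\ $H_b^*(H;\R)$) is an isomorphism in all degrees $\leq n$ and injective in degree $n+1$, which is exactly the claim for the horizontal arrows. The only genuinely subtle point is the bookkeeping of basepoints and the appeal to naturality in Corollary~\ref{cor:bac:set:computes:bounded:cohomology}, but since any two choices of orbit-map cochain representatives induce the same cohomology map, no additional chain-homotopy argument is required; $i$-equivariance of $\varphi$ does all the work.
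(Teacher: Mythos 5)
Your proposal is correct and follows essentially the same route as the paper's proof: choose a basepoint $s_0\in S_H$ with image $t_0=\varphi(s_0)$, represent the canonical maps by pullback along the orbit maps, verify commutativity at the cochain level using $i$-equivariance, and quote Remark~\ref{rem:alt:computes:bc} for part~(ii). No gaps.
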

\begin{proof}
  Part~\ref{item:UBAc action alt} is shown in Remark~\ref{rem:alt:computes:bc}.

  Part~\ref{item:UBAc action alt naturality} is a straightforward computation: As $S_H$ and $S_G$
  are non-empty, we can choose a point~$x_H \in S_H$ and set $x_G \coloneqq
  \varphi(x_H)$.  The orbit maps~$\psi_H \colon H \to S_H$ for~$x_H$
  and $\psi_G \colon G \to S_G$ for~$x_G$ induce cochain maps
  extending~$\id_\R \colon \R \to \R$ and therefore induce the
  canonical maps~$H^*(\linf_{\alt}(S_H^{*+1},\R)^H) \to H_b^*(H;\R)$ and
  $H^*(\linf_{\alt}(S_G^{*+1},\R)^G) \to H_b^*(G;\R)$, respectively.
  Because $\varphi$ is $i$-equivariant and because $\varphi(x_H) =
  x_G$, the diagram
  \[ \xymatrix{%
    \linf_{\alt}(S^{*+1}_G,\IR)^G\ar[r]^-{\psi_G^{*+1}} \ar[d]_-{\varphi^{*+1}}
    & C^*_b(G;\IR)\ar[d]^-{C_b^*(i;\R)} \\
    \linf_{\alt}(S^{*+1}_H,\IR)^H\ar[r]_-{\psi_H^{*+1}} & C^*_b(H;\IR) \,,
  }
  \]
  commutes. Taking cohomology proves part~\ref{item:UBAc action alt naturality}.
\end{proof}

%%%%%%%%%%%%%%%%%%%%%%%%%%%%%
\section{A vanishing theorem for relative simplicial volume}

We prove vanishing theorems for the comparison map and for relative
simplicial volume in the presence of uniformly boundedly acyclic open
covers with small multiplicity, as outlined in
Section~\ref{subsec:intro:vanishing}. The proof uses equivariant nerve
pairs and equivariant bounded cohomology with respect to families of
boundedly acyclic subgroups.

%%%%%%%%%%%%%%%%%%%%%%%%%%%%%%%%%%%%%%%%%%%%%%%%%%%%%%%%%%%%%%%%
\subsection{Uniformly boundedly acyclic open covers}\label{subsec:UBAc covers}

We introduce the notion of uniformly boundedly acyclic open covers.

\begin{defn}[Families associated to a set of subspaces]
  Let~$X$ be a path-connected space with~$\pi_1(X)=G$ and~$\calA$ be a
  set of path-connected subspaces of~$X$.  Consider the set
  \[
  \calG\coloneqq \bigl\{\im\bigl(\pi_1(A\into X)\bigr)\bigm| A\in\calA\bigr\}
  \]
  of subgroups of~$G$; again, we implicitly use the convention on
  basepoints (Section~\ref{subsec:convs}).  Then the
  \emph{intersection-closed family of subgroups of $G$ associated to
    $\calA$} is defined as
  \[
  \F\spann{\calA}\coloneqq \F\spann{\calG} = \biggl\{\bigcap_{i=1}^n g_iH_ig_i^{-1}
  \biggm| n \in \N, H_i\in \calG,g_i\in G \biggr\} \,.
  \]
  For fixed $n\in \IN$, we define the (conjugation-closed) family of
  subgroups of $G$ associated to $\calA$ as
  \[
  \F_n\spann{\calA}\coloneqq \F_n\spann{\calG} = \biggl\{\bigcap_{i=1}^n g_iH_ig_i^{-1}
  \biggm| H_i\in \calG,g_i\in G\biggr\} \,.
  \]
\end{defn}

Using the notion of uniformly boundedly acyclic collections of groups
(Definition~\ref{def:ubacgroup}), we define the following:
  
\begin{defn}[Uniformly boundedly acyclic set of subspaces]
  \label{defn:UBAc set of subspaces}
  Let~$X$ be a path-connected space and~$\calA$ be a set of
  path-connected subspaces of~$X$.  We say that~$\calA$ is
  \emph{uniformly boundedly acylic} [\emph{of order~$n$}] \emph{in~$X$} if the associated
  family~$\F\spann{\calA}$ [resp.~$\F_n\spann{\calA}$]
  is uniformly boundedly acyclic.
\end{defn}

\begin{defn}[Uniformly boundedly acyclic open cover]\label{defn:UBAc cover}
  Let~$X$ be a path-connected space and~$\calU$ be a cover of~$X$ by
  path-connected open subsets.  We say that~$\calU$ is \emph{uniformly
    boundedly acyclic} if it is uniformly boundedly acyclic in~$X$ when
  viewed as a set of subspaces of~$X$.
\end{defn}

By Example~\ref{ex:amenable UUBC}, every amenable cover is uniformly boundedly acyclic.

\begin{rem}\label{rem:Ivanov}
  The above notion of uniformly boundedly acyclic open covers is
  similar to Ivanov's notion of weakly boundedly acyclic open
  covers~\cite[Section~4]{ivanov_bac_covers}.  The difference is that
  we consider intersections of subgroups of the fundamental group
  whereas Ivanov considers fundamental groups of intersections of
  subspaces.  The key steps of our arguments happen on the level of
  bounded cohomology. In contrast, Ivanov's arguments via spectral
  sequences target the comparison map more directly.  It is not clear
  to us whether one of the concepts contains the other.
\end{rem}

%%%%%%%%%%%
\subsection{Strong $H_b^*$-admissibility}\label{subsec:strongadm}

We show that classifying spaces with respect to uniformly boundedly
acyclic families can be used to compute bounded cohomology.
This is phrased in terms of equivariant bounded cohomology
(Definition~\ref{defn:equivariant bounded cohomology}).

\begin{defn}[Strongly $H_b^*$-admissible family]\label{defn:admissible family}
  Let $G$ be a group and let $\F$ be a (conjugation-closed) family of
  subgroups of~$G$ that contains the trivial subgroup~$1$.  Since~$1
  \in \F$, there is a canonical (up to $G$-homotopy) $G$-map~$f \colon
  E G \to E_\F G$.
  The family~$\F$ is \emph{strongly $H_b^*$-admissible} if the induced
  map
  \[ H^*_{G,b}(f;\R)
  \colon H^*_{G,b}(E_\F G;\R)
  \to H^*_{G,b}(E G;\R)
  \cong H_b^*(G;\R)
  \]
  is bijective.
\end{defn}

Definition~\ref{defn:admissible family} is a slight generalisation of
the original definition~\cite[Definition~5.1]{Loeh-Sauer19}, where
only families are considered that are closed under taking subgroups.

\begin{prop}\label{prop:ubacadm}
  Let $G$ be a group and let $\F$ be an intersection-closed family of
  subgroups of~$G$ that is uniformly boundedly acyclic
  and contains the trivial subgroup.  Then $\F$
  is strongly $H_b^*$-admissible.
\end{prop}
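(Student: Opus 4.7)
The plan is to construct an explicit model for~$\EFG$ whose equivariant bounded cochain complex coincides with the cochain complex~$\linf(S^{*+1},\R)^G$ appearing in Corollary~\ref{cor:bac:set:computes:bounded:cohomology}, and then conclude by naturality.

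First, set $S \coloneqq \coprod_{H \in \F} G/H$, which is non-empty because the trivial subgroup lies in~$\F$. Since $\F$ is both conjugation-closed and intersection-closed, every stabiliser of the diagonal action $G \actson S^n$ is an intersection of conjugates of members of $\F$ and hence lies in $\F$. As $\F$ is uniformly boundedly acyclic and this property passes to sub-collections, the action $G \actson S^n$ is uniformly boundedly $k$-acyclic for all $n, k \in \N$. Corollary~\ref{cor:bac:set:computes:bounded:cohomology} therefore yields a canonical isomorphism
\[
H^*\bigl(\linf(S^{*+1},\R)^G\bigr) \xrightarrow{\cong} H_b^*(G;\R) \,,
\]
induced by any bounded $G$-equivariant cochain extension of~$\id_\R$.

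Second, I realise $\linf(S^{*+1},\R)^G$ as the (simplicial) equivariant bounded cochain complex of a specific model of~$\EFG$. Let $\Delta(S)$ denote the simplicial $G$-set with $\Delta(S)_n = S^{n+1}$ and the standard face and degeneracy maps, and equip $|\Delta(S)|$ with the $G$-CW-structure from Example~\ref{ex:from:sc:to:cw}. All isotropy groups of $|\Delta(S)|$ are intersections of conjugates of members of~$\F$ and hence lie in~$\F$. For each $H \in \F$, the coset $eH \in G/H \subset S$ is $H$-fixed, so the fixed-point subcomplex $|\Delta(S)|^H$ is the full simplex on the non-empty set~$S^H$, which is contractible. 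By Theorem~\ref{thm:EFGex}~\ref{item:fixed point sets}, the space $|\Delta(S)|$ is a model for~$\EFG$. An equivariant version of the standard Eilenberg--Zilber argument identifies the cohomology of the simplicial cochain complex $\linf(S^{*+1},\R)^G$ with $H^*_{G,b}(|\Delta(S)|;\R) = H^*_{G,b}(\EFG;\R)$.

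Finally, realise $EG$ as $|\Delta(G)|$ with $G$ acting on itself by left translation, and observe that the $G$-equivariant map $\phi \colon G \to S$, $g \mapsto g \cdot e \in G/1 \subset S$, induces a $G$-simplicial map $|\Delta(G)| \to |\Delta(S)|$. Since $|\Delta(G)|$ is a free $G$-space, this realises the canonical map $f \colon EG \to \EFG$ up to $G$-homotopy. On the simplicial equivariant bounded cochain complexes it induces the pullback $\linf(S^{*+1},\R)^G \to \linf(G^{*+1},\R)^G$, which is a bounded $G$-equivariant cochain extension of~$\id_\R$. Hence Corollary~\ref{cor:bac:set:computes:bounded:cohomology}, together with the naturality recorded in Lemma~\ref{lemma:prel:induced:at:level:of:group:is:the:same}, implies that $H_{G,b}^*(f;\R)$ is the composition of the isomorphisms identified above, and therefore itself an isomorphism. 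The most delicate point is the equivariant bounded Eilenberg--Zilber identification in the second step, which requires producing $G$-equivariant bounded chain maps between the simplicial and singular chain complexes of~$|\Delta(S)|$ together with a bounded $G$-chain homotopy; this is routine but must be carried out with explicit control on the operator norms.
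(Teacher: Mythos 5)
The paper's proof is short and direct: it works with an arbitrary model $E_\F G$, observes that a singular simplex~$\sigma\colon\Delta^n\to E_\F G$ meets only finitely many cells so that~$\Stab_G(\sigma)$ is a finite intersection of conjugates of members of~$\F$ and thus lies in~$\F$, concludes via Proposition~\ref{prop:stabacyc} that $C_b^*(E_\F G;\R)$ is a boundedly acyclic strong resolution of~$\R$, and applies Theorem~\ref{thm:fundamental lemma}. Your proof instead builds an explicit simplicial model~$|\Delta(S)|$ with $S=\coprod_{H\in\F}G/H$, invokes Corollary~\ref{cor:bac:set:computes:bounded:cohomology} to identify $H^*(\linf(S^{*+1},\R)^G)$ with~$H_b^*(G;\R)$, and then wants to identify $H^*(\linf(S^{*+1},\R)^G)$ with~$H^*_{G,b}(|\Delta(S)|;\R)$ before concluding by naturality.

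The gap is exactly the step you flag yourself, and it is not routine. The classical comparisons between simplicial and singular chains (acyclic models, simplicial approximation, iterated subdivision) do not come with degree-wise operator-norm control, so they do not dualise to statements about bounded cochains, and bounded or equivariant bounded cohomology is not a priori computable from bounded simplicial or cellular cochains by an Eilenberg--Zilber token argument. The clean way to bridge the simplicial and singular pictures here is precisely Theorem~\ref{thm:fundamental lemma}: check that $C_b^*(|\Delta(S)|;\R)$ is itself a boundedly acyclic strong resolution of~$\R$ and note that the natural restriction cochain map $C_b^*(|\Delta(S)|;\R)\to\linf(S^{*+1},\R)$ extends~$\id_\R$. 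But verifying the resolution property for $C_b^*(|\Delta(S)|;\R)$ is exactly the paper's cell-stabiliser argument, so after closing the gap your proof re-derives the paper's argument with the added baggage of constructing a specific model. A smaller point: $\Delta(S)$ is a simplicial $G$-\emph{set} (with ordered simplices and degeneracies), not an abstract $G$-simplicial complex, so Example~\ref{ex:from:sc:to:cw} does not literally apply; the conclusion that $|\Delta(S)|$ is a $G$-CW-complex is nonetheless correct because any $g\in G$ fixing a non-degenerate simplex of~$\Delta(S)$ setwise fixes it vertex-wise.
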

\begin{proof}
  We use Proposition~\ref{prop:stabacyc} and the fact that bounded
  cohomology can be computed using acyclic resolutions
  (Theorem~\ref{thm:fundamental lemma}).   
  Let $E_\F G$ be a model for the
  classifying space of~$G$ with respect to the family~$\F$. 
  We show that the chain complex~$C_b^*(E_\F G;\R)$ together with the
  canonical augmentation~$\R \to C_b^0(E_\F G;\R)$ is a boundedly
  acylic resolution of~$\R$ over~$G$: As $1 \in \F$, the space~$E_\F
  G$ is contractible (Theorem~\ref{thm:EFGex}). Therefore, $C_b^*(E_\F G;\R)$ is a resolution
  of~$\R$.

  Moreover, for each~$n \in \N$, the Banach $G$-module~$C_b^n(E_\F
  G;\R)$ is boundedly acyclic: By definition, $C_b^n(E_\F G;\R) =
  \linf(S,\R)$ with~$S \coloneqq \map(\Delta^n,E_\F G)$. In view of
  Proposition~\ref{prop:stabacyc}, it thus suffices to show that the
  stabilisers of the $G$-action on~$S$ lie in the uniformly boundedly
  acyclic collection~$\F$.  If $\sigma \colon \Delta^n \to E_\F G$ is
  a singular simplex, then $\sigma(\Delta^n)$ meets only finitely many
  cells of~$E_\F G$. Therefore, the stabiliser of~$\sigma$ is an
  intersection of finitely many elements of~$\F$ and thus lies
  in~$\F$.

  Let~$f\colon EG\to \EFG$ be the canonical (up to~$G$-homotopy) $G$-map.
  Then the fundamental lemma for boundedly acyclic resolutions
  (Theorem~\ref{thm:fundamental lemma}) shows that the cochain map
  \[ C_b^*(f;\R)^G \colon C_b^*(E_\F G;\R)^G \to C_b^*(E G;\R)^G 
  \]
  induces an isomorphism in bounded cohomology.
\end{proof}

\begin{cor}[{\cite[Proposition 5.2]{Loeh-Sauer19}}]\label{cor:AME admissible}
  Every intersection-closed family~$\F$ that consists of amenable
  groups and contains the trivial subgroup is
  strongly~$H^*_b$-ad\-mis\-si\-ble.
  \hfill\qedsymbol
\end{cor}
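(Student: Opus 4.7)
The plan is to reduce the statement to Proposition~\ref{prop:ubacadm} by verifying that any (intersection-closed) family consisting of amenable subgroups is automatically uniformly boundedly acyclic in the sense of Definition~\ref{def:ubacgroup}.

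First, I would recall the two conditions required for uniform bounded acyclicity: (i) every member of the collection is boundedly acyclic, and (ii) the associated collection of bounded cochain complexes~$(C_b^*(H;\R))_{H\in\F}$ satisfies~$\UUBC^k$ in every degree~$k\in\N$. Condition~(i) is the classical Johnson vanishing: amenable groups have trivial bounded cohomology with trivial $\R$-coefficients in all positive degrees. Condition~(ii) is the content of Example~\ref{ex:amenable UUBC} cited in Section~\ref{subsec:UBAc collections}, which provides the uniform uniform boundary condition for arbitrary collections of amenable groups (this uniformity is already built into the standard averaging construction using an invariant mean and is insensitive to the particular amenable group).

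Once both conditions are in place, the family~$\F$ fits into the hypothesis of Proposition~\ref{prop:ubacadm}: it is intersection-closed, contains the trivial subgroup, and is uniformly boundedly acyclic. Applying that proposition immediately yields that $\F$ is strongly $H_b^*$-admissible, which is exactly the claim of the corollary.

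I do not expect any serious obstacle here, since the corollary is essentially a specialisation of Proposition~\ref{prop:ubacadm} to the amenable case, with the only input being the (well-documented) fact that amenable groups form a uniformly boundedly acyclic collection. The proof in the paper therefore should consist of at most a single sentence invoking Example~\ref{ex:amenable UUBC} and Proposition~\ref{prop:ubacadm}.
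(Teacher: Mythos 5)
Your proposal is correct and matches the paper's intent exactly: the corollary is stated with no written proof precisely because it is the specialisation of Proposition~\ref{prop:ubacadm} to amenable families, with the uniform bounded acyclicity of collections of amenable groups supplied by Example~\ref{ex:amenable UUBC} (which gives the $\UUBC^k$ constant~$1$ via the norm-one contracting homotopy) together with the bounded acyclicity of amenable groups. No gap.
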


%%%%%%%%%%%

\subsection{A relative vanishing theorem}

In this section, we prove the relative vanishing theorem by making use of
the relative equivariant setting developed in the previous
sections. This result extends the classical vanishing theorem by
Ivanov~\cite{ivanov, ivanov_bac_covers} to the relative
setting. By now there are several alternative proofs of
Ivanov's result~\cite{FM:Grom, Loeh-Sauer19, Raptis_vanishing};
moreover, in the case of aspherical manifolds, the vanishing of
simplicial volume can also be obtained directly through the amenable
reduction lemma~\cite{alpertkatz, FM:Grom, loehmoraschinisauer}.  We will follow
the approach via classifying spaces by
L\"oh and Sauer~\cite{Loeh-Sauer19}.

First, we prove a vanishing result for the comparison map in terms of
the relative generalised LS-category (Definition~\ref{defn:factorisation}).

\begin{setup}[CW-pair with a family of subgroups]\label{setup:relcw}
  Let $(X,A)$ be a CW-pair with $X$ path-connected and
  let $A$ have only finitely many connected components.
  \begin{itemize}
  \item We suppose that the inclusion of every component of~$A$
    into~$X$ is $\pi_1$-injective and we let $(G,\calH = (H_i)_{i \in
      I})$ be a fundamental group pair for~$(X,A)$
    (Example~\ref{ex:fundamental group pair});
  \item Let $\F$ be a family of subgroups of~$G$ 
  that contains the trivial subgroup;
  \item For every~$i\in I$, let~$\F|_{H_i}$ be the restricted family of subgroups
  of~$H_i$ (Example~\ref{ex:family:subgroups}~\ref{item:restricted family});
  \item Let $p \colon \widetilde X \to X$ be the universal covering of~$X$.
  \end{itemize}
\end{setup}

\begin{lem}\label{lem:vanishing factorisation}
  In the situation of Setup~\ref{setup:relcw},
  suppose that the family~$\F$ of subgroups of~$G$ is strongly~$H^*_b$-admissible
  and that for every~$i\in I$ the family~$\F|_{H_i}$ of subgroups of~$H_i$ is
  strongly $H^*_b$-admissible.

  Then the canonical map~$f\colon(\widetilde{X},p^{-1}(A))\to \EFGH$
  of~$(G,\calH)$-CW-pairs
  induces an isomorphism in equivariant bounded cohomology:
  \[
  H^*_{G,b}(f)\colon
  H^*_{G,b}\bigl(\EFGH;\IR\bigr)
  \xrightarrow{\cong}
  H^*_{G,b}\bigl(\widetilde{X},p^{-1}(A);\IR\bigr) \,.
  \]
  In particular, the comparison map $\comp^k_{(X,A)}\colon
  H^k_b(X,A;\IR)\to H^k(X,A;\IR)$ vanishes in all degrees~$k\ge
  \cat_\F(X,A)$.
  \begin{proof}
    By Example~\ref{ex:universal covering pair},
  $(\widetilde{X},p^{-1}(A))$ is a~$(G, \calH)$-CW-pair with isotropy
  in the trivial family~$\TR$. Thus, since $\F$ contains the trivial subgroup,
  the $G$-map~$f$ can be factored (up to~$G$-homotopy) as
  $$\bigr(\widetilde{X},p^{-1}(A)\bigl) \xrightarrow{f_1} E_{\TR}(G, \calH) \xrightarrow{f_2} E_{\F}(G, \calH) \,.$$
  Using the long exact sequence for pairs in equivariant bounded cohomology
  together with the induction isomorphism (Lemma~\ref{lem:induction isomorphism}),
  we have the following:
  The induced map $H^*_{G,b}(f_1)$ is a (not necessarily isometric)
  isomorphism by the mapping theorem~\cite{vbc} and the five lemma.
  The map $H^*_{G,b}(f_2)$ is an isomorphism by the five lemma and
  the assumption that the families~$\F$ and~$\F|_{H_i}$ are
  strongly~$H^*_b$-admissible.	
  Together, this yields the desired isomorphism.
  
  Moreover, by naturality of the comparison map we have
  a commutative diagram, where we omit the trivial~$\IR$-coefficients:
  \[\begin{tikzcd}
  	H^k_{G,b}\bigr(\EFGH\bigl)\ar{rr}{H^k_{G,b}(f)}[swap]{\cong}\ar{d}[swap]{\comp^k_{G,\EFGH}}
	&& H^k_{G,b}\bigr(\widetilde{X},p^{-1}(A)\bigl)\ar{d}[swap]{\comp^k_{G,(\widetilde{X},p^{-1}(A))}}
	& H^k_b(X,A)\ar{l}{\cong}[swap]{H^*_b(p)}\ar{d}{\comp^k_{(X,A)}} \\
	H^k_G\bigr(\EFGH\bigl)\ar{rr}[swap]{H^k_G(f)}
	&& H^k_G\bigr(\widetilde{X},p^{-1}(A)\bigl)
	& H^k(X,A)\ar{l}{H^*(p)}[swap]{\cong}\,.
  \end{tikzcd}\]
  The claim follows, since the map~$H^k_G(f)$ is trivial in all 
  degrees~$k\ge \cat_\F(X,A)$ (Remark~\ref{rem:three defs}).
  \end{proof}
\end{lem}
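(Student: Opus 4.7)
The plan is to split the canonical map~$f$ into two stages by factoring through the trivial-isotropy classifying pair~$E_\TR(G,\calH)$, and to analyse each stage via the long exact sequences of pairs together with the induction isomorphism of Lemma~\ref{lem:induction isomorphism}. Since $1\in\F$, the universal property of~$\EFGH$ yields (up to $G$-homotopy) a factorisation
\[
f \colon \bigl(\widetilde X, p^{-1}(A)\bigr) \xrightarrow{f_1} E_\TR(G,\calH) \xrightarrow{f_2} \EFGH,
\]
and the pair on the left has trivial isotropy by Example~\ref{ex:universal covering pair}.

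For the first stage, I would combine the long exact sequences of the pairs~$(\widetilde X, p^{-1}(A))$ and~$E_\TR(G,\calH)$ in equivariant bounded cohomology with the induction isomorphism, which reduces the claim that~$H^*_{G,b}(f_1)$ is an isomorphism to the absolute statements that~$\widetilde X\to EG$ and each~$\widetilde{A_i}\to EH_i$ induce isomorphisms on bounded cohomology. These are instances of the Gromov--Ivanov mapping theorem, and the five lemma then closes this step. For the second stage I would argue in the same style: the long exact sequences together with the induction isomorphism reduce the assertion that~$H^*_{G,b}(f_2)$ is an isomorphism to the facts that~$EG\to \EFG$ and each~$EH_i\to E_{\F|_{H_i}}H_i$ induce isomorphisms on bounded cohomology, which are precisely the strong~$H^*_b$-admissibility hypotheses on~$\F$ and each~$\F|_{H_i}$; a final application of the five lemma completes this step. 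Composing the two isomorphisms yields the desired isomorphism~$H^*_{G,b}(f)$.

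For the vanishing statement I would invoke Remark~\ref{rem:three defs}: if~$k\ge \cat_\F(X,A)$, then~$f$ is $G$-homotopic to a map of $(G,\calH)$-CW-pairs landing in the relative~$(k-1)$-skeleton of~$\EFGH$, so the induced map~$H^k_G(f;\R)$ is zero. Naturality of the comparison map, together with the identifications $H^*_b(X,A;\R)\cong H^*_{G,b}(\widetilde X, p^{-1}(A);\R)$ and $H^*(X,A;\R)\cong H^*_G(\widetilde X, p^{-1}(A);\R)$ obtained from the covering map~$p$, then forces~$\comp^k_{(X,A)}$ to factor through this vanishing ordinary equivariant cohomology group and hence to vanish.

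The main obstacle I expect is the bookkeeping around the long exact sequences of pairs: one must check that the induction isomorphism is sufficiently natural for the ladder of sequences comparing the absolute pieces (the~$H_i$ classifying spaces) with the relative pieces (the~$G$-induced spaces forming the boundary part of~$\EFGH$) to commute. This is a mild but genuinely necessary diagram chase, and it is the only subtle point in translating the absolute arguments of L\"oh--Sauer into the relative setting.
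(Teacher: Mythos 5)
Your proposal is correct and follows essentially the same route as the paper: factoring~$f$ through~$E_{\TR}(G,\calH)$, handling the first stage via the mapping theorem and the second via the strong $H^*_b$-admissibility hypotheses (each time combining the long exact sequences of pairs with the induction isomorphism and the five lemma), and then deducing the vanishing from Remark~\ref{rem:three defs} and naturality of the comparison map. The naturality of the induction isomorphism that you flag as the only subtle point is indeed the implicit bookkeeping in the paper's argument as well.
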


Second, we use the relation between relative category and relative
multiplicity to derive a vanishing theorem via relative open covers:

\begin{setup}[Relative $\F$-cover and its nerves]\label{setup:relcover}
  In the situation of Setup~\ref{setup:relcw}, we
  additionally consider a relative $\F$-cover~$\calU$
  of~$(X,A)$ (Definition~\ref{defn:relative cover}).
  We will use the notions of
  relative multiplicity (Definition~\ref{def:relative:mult})
  and \admissibility\ (Definition~\ref{defn:relative cover}) of relative covers.
  Moreover, we fix the following notation:
  \begin{itemize}
  \item Let $\widetilde{\calU}$ be the lifted $G$-invariant
    cover of $\widetilde{X}$ (Example~\ref{ex:covers});
  \item Let 
    $
    \nu \colon\bigl(X,A\bigr)\to \bigl(|N(\calU)|, |N_A(\calU)|\bigr)
    $
    be a nerve map;
  \item Let
    $
    \widetilde{\nu} \colon
    \bigl(\widetilde{X},p^{-1}(A)\bigr)\to
    \bigl(|N(\widetilde{\calU})|, |N_{p^{-1}(A)}(\widetilde{\calU})|\bigr)
    $ 
    be a corresponding nerve map of $(G,\calH)$-CW-pairs
    (Proposition~\ref{prop:cover downstairs}).
  \end{itemize}
\end{setup}

The following theorem is a relative version of the
vanishing theorem for strongly $H_b^*$-admissible
families~\cite[Theorem~5.3]{Loeh-Sauer19}.

\begin{thm}[Relative vanishing theorem]\label{thm:relative vanishing theorem}
  In the situation of Setup~\ref{setup:relcw} and Setup~\ref{setup:relcover},
  suppose that the family~$\F$ of subgroups of~$G$ is
  strongly~$H^*_b$-admissible and that for all~$i\in I$ the
  family~$\F|_{H_i}$ of subgroups of~$H_i$ is
  strongly~$H^*_b$-admissible.
	
  Then the comparison map $\comp^*_G$ 
  factors through the equivariant nerve map $\widetilde{\nu}$:
  \[\begin{tikzcd}
  H^*_{G,b}(\widetilde{X},p^{-1}(A);\IR)\ar{r}{\comp^*_G}\ar[dashed]{dr} & H^*_G(\widetilde{X},p^{-1}(A);\IR) \\
  & H^*_G(|N(\widetilde{\calU})|,|N_{p^{-1}(A)}(\widetilde{\calU})|;\IR)\ar{u}[swap]{H_G^*(\widetilde{\nu};\IR)} \,.
  \end{tikzcd}\]
  In particular, the following hold:
  \begin{enumerate}[label=\enum]
  \item\label{item:relative vanishing theorem} 
	If $\calU$ is \admissible,
	then the comparison map $$\comp^k\colon H^k_b(X,A;\IR)\to
    H^k(X,A;\IR)$$ vanishes in all degrees~$k\ge \mult_A(\calU)$; 
    \item\label{item:relative nerve theorem} If $\calU$ is convex,
    then
    the comparison map $\comp^*$ factors through the nerve map~$\nu$:
    \[\begin{tikzcd}
    H^*_b(X,A;\IR)\ar{r}{\comp^*}\ar[dashed]{dr} & H^*(X,A;\IR) \\
    & H^*(|N(\calU)|,|N_A(\calU)|;\IR)\ar{u}[swap]{H^*(\nu;\IR)} \,.
    \end{tikzcd}\]
  \end{enumerate}
\end{thm}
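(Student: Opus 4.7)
The strategy is to extend the argument of Lemma~\ref{lem:vanishing factorisation} by inserting the equivariant nerve pair as an intermediary between $(\widetilde{X}, p^{-1}(A))$ and the classifying space~$\EFGH$. By Proposition~\ref{prop:cover downstairs}, the equivariant nerve pair $(|N(\widetilde{\calU})|, |N_{p^{-1}(A)}(\widetilde{\calU})|)$ is a $(G, \calH)$-CW-pair with isotropy in~$\F$, and $\widetilde{\nu}$ is a map of such pairs. Therefore, the universal property of $\EFGH$ (Lemma~\ref{lem:existence classifying pair}) supplies a $G$-map
\[ \psi \colon \bigl(|N(\widetilde{\calU})|, |N_{p^{-1}(A)}(\widetilde{\calU})|\bigr) \to \EFGH, \]
unique up to $G$-homotopy, and the composite $\psi \circ \widetilde{\nu}$ is $G$-homotopic to the canonical map $f \colon (\widetilde{X}, p^{-1}(A)) \to \EFGH$ appearing in Lemma~\ref{lem:vanishing factorisation}.

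The hypotheses on strong $H_b^*$-admissibility are exactly those of Lemma~\ref{lem:vanishing factorisation}, so I would apply it to conclude that $H^*_{G,b}(f;\R)$ is an isomorphism. Combining this isomorphism with the $G$-homotopy factorisation $f \simeq \psi \circ \widetilde{\nu}$ and the naturality square for the comparison map then yields
\[ \comp^*_G = H^*_G(\widetilde{\nu};\R) \circ H^*_G(\psi;\R) \circ \comp^*_{G,\EFGH} \circ H^*_{G,b}(f;\R)^{-1}, \]
which exhibits $\comp^*_G$ as factoring through $H^*_G(\widetilde{\nu};\R)$. Transferring along the standard identifications $H^*_b(X,A;\R) \cong H^*_{G,b}(\widetilde{X}, p^{-1}(A);\R)$ and $H^*(X,A;\R) \cong H^*_G(\widetilde{X}, p^{-1}(A);\R)$ then delivers the corresponding statement for the non-equivariant comparison map.

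For part~\ref{item:relative vanishing theorem}, the \admissibility\ of $\calU$ allows us to invoke Proposition~\ref{prop:cover downstairs}\ref{item:dim mult}, which yields $\dim(N(\widetilde{\calU}), N_{p^{-1}(A)}(\widetilde{\calU})) = \mult_A(\calU) - 1$. Since the equivariant cohomology of a finite-dimensional relative $G$-CW-pair vanishes above its dimension, we obtain $H^k_G(|N(\widetilde{\calU})|, |N_{p^{-1}(A)}(\widetilde{\calU})|;\R) = 0$ for all $k \geq \mult_A(\calU)$, and the factorisation forces $\comp^k$ to vanish in those degrees. For part~\ref{item:relative nerve theorem}, convexity together with Proposition~\ref{prop:cover downstairs}\ref{item:convex} identifies $G\backslash|N(\widetilde{\calU})|$ with $|N(\calU)|$ (and similarly for the relative nerves), so pulling back along~$|N(p)|$ gives a natural map $H^*(|N(\calU)|, |N_A(\calU)|;\R) \to H^*_G(|N(\widetilde{\calU})|, |N_{p^{-1}(A)}(\widetilde{\calU})|;\R)$ compatible with $\widetilde{\nu}$ and $\nu$ via the commutative square of Proposition~\ref{prop:cover downstairs}, which yields the desired factorisation through $\nu$.

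The main obstacle will be the first paragraph: carefully constructing the $G$-map $\psi$ and verifying that $\psi \circ \widetilde{\nu}$ is $G$-homotopic to $f$ \emph{as maps of $(G,\calH)$-CW-pairs}, which is precisely where the relative CW-pair structure from Proposition~\ref{prop:cover downstairs} must dovetail with the universal property of $\EFGH$ in the relative sense. Once this equivariant factorisation is in place, parts~\ref{item:relative vanishing theorem} and~\ref{item:relative nerve theorem} follow from the dimension and quotient statements in Proposition~\ref{prop:cover downstairs}.
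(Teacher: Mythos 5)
Your proposal is correct and follows essentially the same route as the paper: factor the canonical map $f$ through $\widetilde{\nu}$ via the universal property of $\EFGH$ (the uniqueness clause of that universal property is exactly what resolves the "obstacle" you flag), invert $H^*_{G,b}(f)$ using Lemma~\ref{lem:vanishing factorisation}, and then derive (i) from the dimension statement and (ii) from the quotient isomorphisms of Proposition~\ref{prop:cover downstairs}. The factorisation formula you write is the one appearing in the paper's proof.
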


\begin{proof}
  Let~$\EFGH$ be a model for the classifying space of~$(G,\calH)$ with respect to
  the family~$\F$ (Lemma~\ref{lem:existence classifying pair}).
  By Proposition~\ref{prop:cover downstairs},
  $(|N(\widetilde{\calU})|,|N_{p^{-1}(A)}(\widetilde{\calU})|)$ is a
  $(G,\calH)$-CW-pair with isotropy in $\F$. 
  By Example~\ref{ex:universal covering pair},
  $(\widetilde{X},p^{-1}(A))$ is a~$(G, \calH)$-CW-pair with isotropy
  in the trivial family~$\TR$.  
  The universal property of~$\EFGH$ yields that
  the canonical map~$f\colon (\widetilde{X},p^{-1}(A))\to \EFGH$ 
  of~$(G,\calH)$-CW-pairs factors
  (up to~$G$-homotopy) through the equivariant nerve map~$\widetilde{\nu}$:
  \[
  	\bigl(\widetilde{X},p^{-1}(A)\bigr)\xrightarrow{\widetilde{\nu}}
  	\bigl(|N(\widetilde{\calU})|,|N_{p^{-1}(A)}(\widetilde{\calU})|\bigr)\xrightarrow{\varphi} \EFGH \,.
  \]
  Hence, we have the following commutative diagram, where the trivial
  coefficient module~$\IR$ is omitted from the notation.
  \[\begin{tikzcd}
  H^*_b(X,A)\ar{rr}{\comp^*}\ar{d}{\cong}[swap]{H^*_b(p)}
  && H^*(X,A)\ar{d}[swap]{H^*(p)}{\cong}
  & H^*\bigl(|N(\calU)|,|N_A(\calU)|\bigr)\ar{l}[swap]{H^*(\nu)}\ar{d}{H^*(|N(p)|)}
  \\
  H^*_{G,b}\bigl(\widetilde{X},p^{-1}(A)\bigr)\ar{rr}{\comp^*_G}
  && H^*_G\bigl(\widetilde{X},p^{-1}(A)\bigr)
  & H^*_G\bigr(|N(\widetilde{\calU})|,|N_{p^{-1}(A)}(\widetilde{\calU})|\bigl)\ar{l}[swap]{H^*_G(\widetilde{\nu})} \\
  H^*_{G,b}\bigl(\EFGH\bigr)\ar{u}{H^*_{G,b}(f)}[swap]{\cong}\ar{rr}{\comp^*_{G,\EFGH}}
  && H^*_G\bigl(\EFGH\bigr)\ar{u}{H^*_G(f)}\ar{ru}[swap]{H^*_G(\varphi)} 
  \end{tikzcd}\]
  The map~$H^*_{G,b}(f)$ is an isomorphism by Lemma~\ref{lem:vanishing factorisation}.
  Therefore, the desired factorisation of the
  comparison map $\comp^*_G$ is given by
  \[
  \comp^*_G=H^*_G(\widetilde{\nu})\circ H^*_G(\varphi)\circ \comp^*_{G,\EFGH}\circ H^*_{G,b}(f)^{-1} \,.
  \]
  
  \ref{item:relative vanishing theorem}
  If $\calU$ is \admissible,  
  then we have~$\cat_\F(X,A)\le \mult_A(\calU)$ (Lemma~\ref{lem:factorisation multiplicity})
  and we conclude by Lemma~\ref{lem:vanishing factorisation}.
  More explicitly, let $n=\mult_A(\calU)$. Then, by
  Proposition~\ref{prop:cover downstairs}~\ref{item:dim mult}, we have
  $\dim(|N(\widetilde{\calU})|,|N_{p^{-1}(A)}(\widetilde{\calU})|) =
  n -1$ and hence
  $H^k_G(|N(\widetilde{\calU})|,|N_{p^{-1}(A)}(\widetilde{\calU})|)=0$
  for $k\ge n$. This shows that $\comp^k$ vanishes in every degree~$k
  \geq n$.
  
  \ref{item:relative nerve theorem}
  If $\calU$ is convex,
  then the map
  $H^*(|N(p)|)$ is an isomorphism by Proposition~\ref{prop:cover
    downstairs}~\ref{item:convex}.  Hence the desired factorisation of the
  comparison map $\comp^*$ is given by
  \[
  \comp^*=H^*(\nu) \circ H^*(|N(p)|)^{-1}\circ H^*_G(\varphi) \circ \comp^*_{G,\EFGH}\circ  H^*_{G,b}(f)^{-1}\circ H^*_b(p) \,.
  \]
  This proves the statement.
\end{proof}

\begin{rem}[$H^*_b$-admissible families]
  A family~$\F$ of subgroups of~$G$ containing the trivial subgroup is
  called (not necessarily strongly) \emph{$H^*_b$-admissible} if the
  canonical~$G$-map~$EG\to \EFG$ induces a surjective map in
  equivariant bounded cohomology in all
  degrees~\cite[Definition~5.1]{Loeh-Sauer19}.
	
  The conclusions of Theorem~\ref{thm:relative vanishing theorem} hold
  more generally, if in the situation of
  Setup~\ref{setup:relcover} the family~$\F$ of
  subgroups of~$G$ is only assumed to be (not necessarily strongly)
  $H^*_b$-admissible, while the families~$\F|_{H_i}$ of subgroups
  of~$H_i$ are still assumed to be strongly~$H^*_b$-admissible.
  Indeed, similarly to Lemma~\ref{lem:vanishing factorisation},
  in this case it follows from the four lemma for epimorphisms
  that the canonical~$G$-map~$(\widetilde{X},p^{-1}(A))\to \EFGH$ induces a
  surjective map in equivariant bounded cohomology in all degrees.
  This suffices to carry out the above proof of
  Theorem~\ref{thm:relative vanishing theorem}.
\end{rem}

As an application of the previous theorem, we can deduce the vanishing
theorem for uniformly boundedly acyclic open covers
(Definition~\ref{defn:UBAc cover}), which complements a recent result
by Ivanov~\cite{ivanov_bac_covers}.

\begin{cor}[Relative vanishing theorem for uniformly boundedly acyclic covers]
\label{cor:vanishing UBAc}
	In the situation of Setup~\ref{setup:relcw} and Setup~\ref{setup:relcover},
	suppose that the relative cover~$\calU$ of~$(X,A)$ is
	uniformly boundedly acyclic, viewed as an open cover of~$X$.
	
	Then all statements in Theorem~\ref{thm:relative vanishing theorem} hold.
	
	\begin{proof}
		We take~$\F$ to be the family~$\F\spann{\calU}\cup \{1\}$ of subgroups 
		of~$G$.
		Since the cover~$\calU$ is uniformly boundedly acyclic,
		the family~$\F$ is uniformly boundedly acyclic and hence
		strongly $H^*_b$-admissible (Proposition~\ref{prop:ubacadm}).
		As subsets of uniformly boundedly acyclic sets of groups are again
		uniformly boundedly acyclic, the restricted families~$\F|_{H_i}$ of 
		subgroups of~$H_i$ are strongly $H^*_b$-admissible for every~$i\in I$.
		Thus, Theorem~\ref{thm:relative vanishing theorem} applies and
		yields the thesis.
	\end{proof}
\end{cor}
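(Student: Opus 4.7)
The plan is to reduce Corollary~\ref{cor:vanishing UBAc} directly to Theorem~\ref{thm:relative vanishing theorem} by exhibiting a single family of subgroups of $G = \pi_1(X)$ satisfying the two strong $H^*_b$-admissibility hypotheses of that theorem. Concretely, I would take $\F \coloneqq \F\spann{\calU} \cup \{1\}$, the intersection-closed family generated by the images in $G$ of the fundamental groups of members of $\calU$ (see Definition~\ref{defn:UBAc set of subspaces}), augmented by the trivial subgroup. By construction $\F$ is intersection-closed, contains~$1$, and $\calU$ is automatically a relative $\F$-cover of $(X,A)$, so once I verify the two admissibility conditions the conclusion of Theorem~\ref{thm:relative vanishing theorem} transfers verbatim.

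For the first admissibility condition, the hypothesis that $\calU$ is uniformly boundedly acyclic (Definition~\ref{defn:UBAc cover}) is by definition the statement that $\F\spann{\calU}$ is a uniformly boundedly acyclic collection of groups. Adjoining the trivial subgroup preserves this property, since the trivial group is boundedly acyclic and its bounded cochain complex contributes no new obstruction to~$\UUBC^k$ in any degree. Proposition~\ref{prop:ubacadm} then yields that $\F$ is strongly $H^*_b$-admissible.

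For the second condition, fix $i \in I$ and consider the restricted family $\F|_{H_i}$ of subgroups of $H_i$. It is again intersection-closed (the intersection of two subgroups of $H_i$ that lie in $\F$ lies in $\F$ by intersection-closedness, and is still contained in $H_i$) and contains the trivial subgroup. As a subcollection of the uniformly boundedly acyclic family $\F$, it inherits uniform bounded acyclicity: each member is already boundedly acyclic, and any $\UUBC^k$ constant that works uniformly over $\F$ works a fortiori over $\F|_{H_i}$. A second application of Proposition~\ref{prop:ubacadm} gives that each $\F|_{H_i}$ is strongly $H^*_b$-admissible. Theorem~\ref{thm:relative vanishing theorem} then produces both the factorisation of the comparison map through the nerve and the vanishing in the claimed degree range.

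The only genuinely delicate observation is that uniform bounded acyclicity is inherited by subcollections and is preserved under adjoining the trivial subgroup; both facts are formal from the definitions in Section~\ref{subsec:UBAc collections}, so no substantive obstacle arises. In effect, the entire content of the corollary is already packaged in Proposition~\ref{prop:ubacadm} and Theorem~\ref{thm:relative vanishing theorem}; what remains is simply to feed in the correct family.
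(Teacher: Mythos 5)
Your proposal is correct and follows essentially the same route as the paper's own proof: choose $\F=\F\spann{\calU}\cup\{1\}$, invoke Proposition~\ref{prop:ubacadm} for the strong $H^*_b$-admissibility of $\F$ and of each restricted family $\F|_{H_i}$ (using that subcollections inherit uniform bounded acyclicity), and then apply Theorem~\ref{thm:relative vanishing theorem}. The extra details you supply (intersection-closedness of $\F|_{H_i}$, harmlessness of adjoining the trivial subgroup) are correct and merely make explicit what the paper leaves implicit.
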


\begin{rem}[$\ell^1$-Homology]
  In view of \emph{strong} $H_b^*$-admissibility, the analogues of
  Theorem~\ref{thm:relative vanishing theorem} and 
  Corollary~\ref{cor:vanishing UBAc} for $\ell^1$-homology also
  hold. One can simply argue through duality as in the absolute
  case~\cite[Section~6]{Loeh-Sauer19}.
\end{rem}

In particular, Theorem~\ref{thm:relative vanishing theorem} gives a
relative vanishing theorem in the presence of ``small" relative amenable
\category\ (Definition~\ref{defn:relative category}).

\begin{cor}[Relative vanishing theorem for amenable covers]\label{cor:rel:van:thm:ame}
  Let $(X,A)$ be a CW-pair with path-connected ambient space~$X$.  Assume that~$A$
  consists of finitely many connected components, each of which
  is~$\pi_1$-injective in $X$.  Then the comparison map
  \[
  \comp^k_{(X,A)}\colon H^k_b(X,A;\IR)\to H^k(X,A;\IR)
  \] 	
  vanishes in all degrees~$k \geq \relcat_\AME(X,A)$.
  
  In particular, if $(M,\partial M)$ is an oriented compact connected
  triangulable manifold with (possibly empty) $\pi_1$-injective
  boundary and $\relcat_{\AME}(M, \partial M) \leq \dim(M)$, then the relative
  simplicial volume~$\|M,\partial M\|$ vanishes.
\end{cor}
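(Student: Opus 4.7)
The plan is to deduce this corollary directly from Corollary~\ref{cor:vanishing UBAc} together with the duality principle (Proposition~\ref{duality:principle}), exploiting the fact that amenable covers are a special instance of uniformly boundedly acyclic covers. First, I would unpack Definition~\ref{defn:relative category}: assuming $n \coloneqq \relcat_\AME(X,A)$ is finite (otherwise the statement is vacuous in the relevant range), there exists a \admissible\ relative amenable open cover~$\calU$ of~$(X,A)$ with $\mult_A(\calU) = n$.

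Next, I would verify that $\calU$ meets all the hypotheses of Corollary~\ref{cor:vanishing UBAc}~(i). The pair~$(X,A)$ fits into Setup~\ref{setup:relcw} by the standing assumptions on $X$ and on the components of $A$, and together with $\calU$ it fits into Setup~\ref{setup:relcover}. The cover~$\calU$ is \admissible\ by construction, and since it is amenable it is in particular uniformly boundedly acyclic: every collection of amenable groups is a uniformly boundedly acyclic collection (this is recorded right after Definition~\ref{def:ubacgroup} via Example~\ref{ex:amenable UUBC}), so the associated intersection-closed family~$\F\spann{\calU}$ is uniformly boundedly acyclic. Hence Corollary~\ref{cor:vanishing UBAc}~(i) applies and gives the vanishing of~$\comp^k_{(X,A)}$ in all degrees~$k \geq \mult_A(\calU) = \relcat_\AME(X,A)$.

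For the manifold statement, I would apply the first part to the CW-pair~$(M,\partial M)$ obtained from a chosen triangulation (so that $\partial M$ is a subcomplex and the boundary components are $\pi_1$-injective). Under the hypothesis $\relcat_\AME(M,\partial M) \leq \dim(M) =: n$, the first part forces~$\comp^n_{(M,\partial M)}$ to vanish; in particular, this map is not surjective onto the top cohomology~$H^n(M,\partial M;\R) \cong \R$. The duality principle (Proposition~\ref{duality:principle}) then yields $\|M,\partial M\| = 0$.

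Since this corollary is essentially a repackaging of Corollary~\ref{cor:vanishing UBAc} together with Proposition~\ref{duality:principle}, the argument is a matter of matching hypotheses rather than of genuinely new input; the only point requiring a small check is that the \admissible\ amenable cover extracted from the definition of~$\relcat_\AME$ indeed qualifies as a uniformly boundedly acyclic cover, which is immediate from the amenable case of $\UUBC$.
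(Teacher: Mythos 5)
Your proof is correct. The paper's own proof is even shorter: it applies Theorem~\ref{thm:relative vanishing theorem}~(i) directly with the family~$\F=\AME$, whose strong $H^*_b$-admissibility (and that of its restrictions~$\AME|_{H_i}$) is supplied by Corollary~\ref{cor:AME admissible}; the manifold statement is then obtained exactly as you do, via triangulability and Proposition~\ref{duality:principle}. You instead route through Corollary~\ref{cor:vanishing UBAc}, using that an amenable cover is uniformly boundedly acyclic (Example~\ref{ex:amenable UUBC}), so that the generated family~$\F\spann{\calU}\cup\{1\}$ is strongly $H^*_b$-admissible by Proposition~\ref{prop:ubacadm}. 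Both routes terminate at the same key result, Theorem~\ref{thm:relative vanishing theorem}; yours trades the citation of Corollary~\ref{cor:AME admissible} (which rests on the amenable-actions machinery of L\"oh--Sauer) for the paper's internally developed uniform-bounded-acyclicity machinery, at the cost of one extra verification (that finite intersections of conjugates of the cover's image subgroups remain amenable, so that the generated family is indeed uniformly boundedly acyclic). Your hypothesis-matching for Setups~\ref{setup:relcw} and~\ref{setup:relcover} and the final duality step are all in order.
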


\begin{proof}
  The claim on the comparison map follows from the
  strong~$H^*_b$-admissibility of the family~$\AME$
  (Corollary~\ref{cor:AME admissible}).
  
  Because $(M,\partial M)$ is triangulable, both $M$ and $\partial M$
  admit compatible triangulations; in particular, we can
  view~$(M,\partial M)$ as a CW-pair.  Therefore, the statement on
  relative simplicial volume follows from the duality principle
  (Proposition~\ref{duality:principle})
\end{proof}

\begin{rem}[Optimality of assumptions]
  Let $\Sigma$ denote the surface of genus~$1$ with one boundary
  component.  Since the interior of~$\Sigma$ admits a complete
  hyperbolic metric with finite volume, we know that $\sv{\Sigma,
    \partial\Sigma} > 0$ (Example~\ref{ex:relative:sv}.1).
  This shows that~$\relcat_\AME(\Sigma,\partial\Sigma)=3$.  However, when
  either one of the conditions
  ~\ref{item:RC1},~\ref{item:RC2}, or \admissibility\ on the open cover
  is dropped, then it is not difficult to
  construct amenable covers of~$(\Sigma,\partial\Sigma)$ with relative
  multiplicity at most~$2$.  In this sense, our set of
  assumptions in Corollary~\ref{cor:rel:van:thm:ame} is optimal.
\end{rem}

%%%%%%%%%%%%%%
\subsection{Amenable covers with small multiplicity on the boundary}
\label{sec:Gromov relative vanishing}

We compare Corollary~\ref{cor:rel:van:thm:ame} with
existing results in the literature~\cite[Section~3.3]{LMR}.  
The main available relative vanishing result is the following,
which is based on Gromov's
vanishing theorem for non-compact manifolds~\cite{vbc}\cite[Corollary~11]{FM:Grom}: 
   
\begin{thm}[{\cite[Theorem~3.13]{LMR}}]\label{thm:rel:van:thm:LMR}
  Let $(M, \partial M)$ be an oriented compact connected $n$-manifold
  with non-empty boundary.  Let $\calU$ be an open cover of~$M$
  and let $\calU |_{\partial M}$ denote the restriction of~$\calU$ 
  to~$\partial M$, i.e., $\calU |_{\partial M} \coloneqq \{U \cap \partial M \mid U \in \calU\}$.
  Suppose that the following hold:
  \begin{enumerate}[label=\enum]
  \item\label{item:Gromov1} $\mult(\calU) \leq n$;
  \item\label{item:Gromov2} $\mult(\calU |_{\partial M}) \leq n-1$;
  \item\label{item:Gromov3} The open covers~$\calU$ of~$M$ and $\calU |_{\partial
    M}$ of~$\partial M$ are amenable.
  \end{enumerate}
  Then  $\sv{M, \partial M} = 0$.
\end{thm}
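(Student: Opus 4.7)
The plan is to reduce the statement to Corollary~\ref{cor:rel:van:thm:ame} by upgrading the given cover~$\calU$ into a weakly convex relative amenable open cover~$\calV$ of~$(M,\partial M)$ by path-connected subsets with $\mult_{\partial M}(\calV)\le n$. Strictly speaking, that corollary also requires $\pi_1$-injective boundary; this additional hypothesis may either be included tacitly, or handled via a preliminary reduction along the collar.

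First I would replace each $U\in\calU$ by its path-connected components, which preserves amenability and the bounds~\ref{item:Gromov1} and~\ref{item:Gromov2}. Fix a collar $c\colon\partial M\times [0,1)\hookrightarrow M$ and, for $t\in(0,1)$, let $N_t\coloneqq c(\partial M\times[0,t))$. The cover~$\calV$ is built from two families: for each path-connected $W\in\calU|_{\partial M}$, include the collar thickening $\widehat W\coloneqq c(W\times[0,1/2))$; and for each $U\in\calU$, include the path-connected components of $\widetilde U\coloneqq U\setminus\overline{N_{1/4}}$. Both families consist of path-connected open sets whose images in $\pi_1(M)$ are amenable (via the collar deformation retraction of $\widehat W$ onto $W$, respectively via inclusion into~$U$), and their union covers~$M$. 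The verifications that $\calV$ is a relative cover (conditions~\ref{item:RC1} and~\ref{item:RC2} hold because $\widehat W\cap\partial M=W$ is path-connected and $\widehat W$ deformation retracts onto $W$) and is weakly convex are straightforward from the collar structure.

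The main obstacle is the bound $\mult_{\partial M}(\calV)\le n$. Intersections of $\widetilde U$-components that avoid $\partial M$ are bounded by $\mult(\calU)\le n$, since distinct components lie in distinct~$U$; intersections of only $\widehat W$'s always meet $\partial M$ and hence do not contribute to $\mult_{\partial M}$. The delicate case is a mixed intersection in the transition layer $c(\partial M\times(1/4,1/2))$, where the naive count gives $\mult(\calU|_{\partial M})+\mult(\calU)=(n-1)+n=2n-1$, which is too large. To resolve this, the construction must be refined so that each~$U$ meeting the boundary contributes at most one set to $\calV$ at any point of the transition region --- for instance, by pairing $U$ with its unique $W=U\cap\partial M$ and interpolating trimming depths so that $U$ is represented by $\widehat W$ in the collar region and by $\widetilde U$ only in the deep interior, never by both at a single point. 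Making this precise, while preserving the covering and weak convexity properties, is the main technical content and yields the desired bound $\mult_{\partial M}(\calV)\le\mult(\calU)\le n$.
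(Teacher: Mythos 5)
Your overall strategy is the same as the paper's: use the collar to turn $\calU$ into a \admissible\ relative $\AME$-cover $\calV$ of $(M,\partial M)$ with $\mult_{\partial M}(\calV)\le n$ and then invoke Corollary~\ref{cor:rel:van:thm:ame}. (Note that the paper only proves Theorem~\ref{thm:rel:van:thm:LMR} under the additional hypotheses that $M$ is triangulable and $\partial M$ is $\pi_1$-injective, and explicitly states that the general case is not recovered this way; so of your two options for handling $\pi_1$-injectivity, only ``include it tacitly'' is available --- there is no known reduction along the collar.)

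The genuine gap is exactly where you say the ``main technical content'' lies: you identify the multiplicity blow-up in the transition layer but do not resolve it, and your sketched fix does not work as stated. If every $U$ transitions from its collar thickening to its trimmed interior at the \emph{same} depth, then either the cover has a gap at that depth or all sets are doubly represented there simultaneously, reproducing the bad count $\mult(\calU)+\mult(\calU|_{\partial M})$. A warning sign is that your claimed final bound $\mult_{\partial M}(\calV)\le\mult(\calU)$ makes no use of hypothesis~\ref{item:Gromov2} at all, which cannot be right. The paper's resolution is to \emph{stagger} the transition depths: enumerate $\calU=\{U_1,\dots,U_k\}$, set $t_i=i/(k+1)$ and $\varepsilon=1/(3(k+1))$, extend $U_i$ into the collar up to depth $t_i+\varepsilon$, and attach the cylinders over the components of $U_i\cap\partial M$ starting at depth $t_i-\varepsilon$ (discarding redundant cylinders afterwards). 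Since the intervals $[t_i-\varepsilon,t_i+\varepsilon)$ are pairwise disjoint, at any point of the collar at most \emph{one} index $i$ is in transition, and a case analysis gives $\mult(\calV)\le\max\{\mult(\calU),\;\mult(\calU|_{\partial M})+1\}\le n$, using both~\ref{item:Gromov1} and~\ref{item:Gromov2}. Without this (or an equivalent) device, your construction does not establish the required bound on the relative multiplicity.
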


	We emphasise that (contrary to our convention) 
	here the restricted cover~$\calU|_{\partial M}$ of~$\partial M$
	may consist of disconnected subsets.

  Since this result neither assumes that
  the manifold~$M$ is triangulable, nor that the boundary
  inclusion is $\pi_1$-injective (in the sense of
  Section~\ref{subsec:convs}), while our
  Corollary~\ref{cor:rel:van:thm:ame} does so,
  we cannot recover Theorem~\ref{thm:rel:van:thm:LMR} in full generality.
  However, in the special situation of triangulable manifolds
  with $\pi_1$-injective 
  boundary, we can provide a simplified proof that
  does not make use of Gromov's theory of diffusion of chains:
  
\begin{proof}[Proof of Theorem~\ref{thm:rel:van:thm:LMR} for triangulable manifolds
      with $\pi_1$-injective boundary]\hfil 
  We show that there exists a
  \admissible\ relative $\AME$-cover~$\calV$ of~$(M, \partial M)$ such
  that~$\mult_{\partial M}(\calV) \leq n = \dim(M)$.  This implies that
  $\relcat_{\AME}(M, \partial M) \leq n$ and thus by
  Corollary~\ref{cor:rel:van:thm:ame} the thesis.
  
  Let~$m\coloneqq \{\mult(\calU),\mult(\calU|_{\partial M})+1\}$.
  By conditions~\ref{item:Gromov1} and~\ref{item:Gromov2}, we have~$m\le n$.
  From~$\calU$ we will construct a new cover~$\calV$ of~$(M,\partial M)$ that is
  a weakly convex relative $\AME$-cover with $\mult_{\partial M}(\calV)\le m$.
  To this end, we follow a classical strategy for modifying open
  covers (while controlling the multiplicity) in the case of compact
  manifolds with boundary~\cite[proof of
    Theorem~11.2.3]{FM:Grom}\cite[proof of
    Theorem~5.3]{Loh-Sauer-degree}.

  By compactness, we may assume that $\calU$ is finite, say $\calU = \{U_1, \dots, U_k\}$.
  Since $\partial M$ is collared in $M$, we have the following identification:
  \[
  (M, \partial M)
  \cong (M', \partial M') \coloneqq \bigl(M \cup_{\partial M \cong (\partial M \times \{0\})}
    (\partial M \times [0, 1]), \partial M \times \{1\}\bigr) \,.
  \]
  Let $\varepsilon \coloneqq 1 \slash 3(k+1)$ and $t_i \coloneqq i \slash (k+1)$.
  Moreover, for $i \in \{1, \dots, k\}$, let $\calU(i)$ denote the set of connected 
  components of $U_i \cap \partial M$. We set for every $i \in \{1, \dots, k\}$
  such that $U_i \cap \partial M \neq \emptyset$
  \[
  U_i' \coloneqq U_i \cup \bigl( (U_i \cap \partial M) \times [0, t_i + \varepsilon) \bigr) \subset M'
  \]
  and
  \[
  \calU'(i) \coloneqq \bigl\{U \times (t_i - \varepsilon, 1] \bigm| U \in \calU(i)\bigr\} \,.
  \]  
  When $U_i \cap \partial M = \emptyset$, we just set $U_i' \coloneqq U_i$ and $\mathcal{U}'(i) \coloneqq \emptyset$.
  This produces a new open amenable cover 
  \[
  \calU' \coloneqq \bigl\{U_1', \dots, U_k'\bigr\} \cup \calU'(1) \cup \dots \cup \calU'(k) 
  \]
  of $M'$.
  
  Finally, we obtain $\calV$ from $\calU'$ by discarding all sets of the form $U \times (t_i - \varepsilon, 1]$
  with $U \in \calU(j)$ for some $j > i$. Then $\calV$ is an amenable open cover
  of $M'$ and a straightforward case analysis shows that $\mult(\calV) \leq m$.
  In particular, this implies that 
  $\mult_{\partial M'}(\calV) \leq \mult(\calV) \leq m \leq n$
  by assumptions~\ref{item:Gromov1} and~\ref{item:Gromov2}.
     
  We are left to show that $\calV$ satisfies all the conditions in
  Definition~\ref{defn:relative cover}.  The key observation is the
  following: Only sets $V \in \calV$ of the form $V = U \times (t_i - \varepsilon, 1]$
  with $U \in \calU(i)$ intersect $\partial M'$. 
  In particular, if $V \in \calV$
  satisfies~$V \cap \partial M' \neq \emptyset$, then $V \cap \partial
  M'$ is connected. Thus, \ref{item:RC1} is satisfied.
  Moreover, for the same reason every $V \in \calV$ 
  with $V \cap \partial M' \neq \emptyset$ 
  deformation retracts onto $V \cap \partial M'$,
  whence~\ref{item:RC2} holds.  
  It remains to show that the relative cover $\calV$
  is weakly convex. If $V_1,
  \dots, V_j \in \calV$ satisfy $V_1 \cap \dots \cap V_j \cap
  \partial M' \neq \emptyset$, then  $V_1 \cap
  \dots \cap V_j$ is of the form
  $(V_1 \cap \dots \cap V_j \cap \partial M') \times (r,
  1]$ for some $r \in (0, 1)$.
  In particular, each component of $V_1 \cap \dots \cap V_j$
  intersects $\partial M'$. Hence, $\calV$ is weakly convex.
  
  We conclude that $\calV$ is a \admissible\ 
    relative $\AME$-cover of~$(M',\partial M')$ with $\mult_{\partial M'}
    (\calV) \leq n$. Using the identification of~$(M',\partial M')$
    with~$(M, \partial M)$, we get the thesis.
\end{proof}

We conclude this section by showing that for general CW-pairs~$(X, A)$
such that the inclusion of~$A$ into~$X$ is $\pi_1$-injective the
hypotheses of Corollary~\ref{cor:rel:van:thm:ame} are weaker than the
ones of Theorem~\ref{thm:rel:van:thm:LMR}:

\begin{ex}\label{ex:two:cover:conditions:are:diff}
  Let $n \geq 3$. Let $M$ be an oriented closed connected hyperbolic
  $(n-1)$-manifold and denote $I=[0,1]$. 
  We consider the CW-pair $(M \times I, M \times \{1\})$.  
  Since $M \times \{1\} \cong M$ is hyperbolic, we have
  $\sv{M \times \{1\}} > 0$~\cite{Thurston, vbc}.  Hence, $M \times
  \{1\}$ cannot admit an open amenable cover with multiplicity at most~$n-1$.
  This shows that there is no open amenable cover of $M$
  whose restriction to $M \times \{1\}$ is both amenable and with
  multiplicity at most $n-1$. 

  On the other hand, it is easy to construct a \admissible\ relative
  $\AME$-cover~$\calV$ of~$(M \times I,M\times \{1\})$ with $\mult_{M
    \times \{1\}}(\calV) \leq n$.  Let $\calU$ be the open star cover
  of~$M \times \{1\}$ and let $\calV$ be the cover of $M \times I$
  defined as follows:
  $$
  \calV \coloneqq \{U \times I \mid U \in \calU\} \,.
  $$
  Since by construction $\calV$ consists of contractible sets,
  $\calV$ is an amenable open cover.
   Moreover, each member of $\calV$ intersects $M \times \{1\}$ in a
  contractible, whence connected, set. 
  The same argument also applies to multiple intersections. Finally, since each element $V$ in
  $\calV$ is a product $U \times I$ with $U \in \calU$, it
  follows that $V$ retracts by deformation onto $V \cap (M
  \times \{1\})$.  This shows that $\calV$ is a \admissible\ relative
  $\AME$-cover of $(M \times I, M \times \{1\})$. 
  
  Since by construction the
  relative multiplicity of $\calV$ is zero, we have obtained our
  desired cover.
\end{ex}

We do not know whether the previous example can be improved to the situation
of compact manifolds with $\pi_1$-injective boundary.

%%%%%%%%%%%%%%%%%%%%%%%%%%%%%%%%%%%%%%%%%%%%%%%%%%%%%%%%%%%%%
\section{A vanishing theorem for relative $\ell^2$-Betti numbers}

We prove a vanishing theorem for the relative $\ell^2$-Betti numbers
of aspherical CW-pairs with small relative amenable \category\ using
equivariant nerve pairs.  In the absolute case for~$\ell^2$-Betti
numbers, more sophisticated arguments involving nerves have previously
been used by Sauer~\cite{Sauer09}.  For further background on
$\ell^2$-Bet\-ti numbers we refer to the
literature~\cite{Lueck02,Kammeyer19}.  The results of this section are
not used in the rest of the article.

We use the following (non-standard) notation:

\begin{defn}[$\ell^2$-Homology and $\ell^2$-Betti numbers]
  Let $Y$ be a $G$-space. The
  \emph{$\ell^2$-homology}~$H^{(2)}_*(G\actson Y)$ is defined as the
  singular homology of~$Y$ with twisted coefficients in the group
  von Neumann algebra~$\calN G$, that is the $\calN G$-module
  \[
  H^{(2)}_*(G\actson Y)\coloneqq H_*^G(Y;\calN G) \,.
  \]
  The \emph{$n$-th $\ell^2$-Betti number}~$b^{(2)}_n(G\actson Y)\in
  \IR_{\ge 0} \cup\{\infty\}$ is
  \[
  b^{(2)}_n(G\actson Y)\coloneqq \dim_{\calN G}\bigl(H^{(2)}_n(G\actson Y)\bigr) \,,
  \]
  where $\dim_{\calN G}$ is the von Neumann dimension function.  For a
  pair~$(Y,B)$ of $G$-spaces, one similarly defines
  $H^{(2)}_*(G\actson (Y,B))$ and~$b^{(2)}_*(G\actson (Y,B))$.
	
  The $\ell^2$-Betti numbers~$b^{(2)}_*(G)$ of a group~$G$ are
  defined as
  \[
  b^{(2)}_*(G)\coloneqq b^{(2)}_*(G\actson EG) \,.
  \]
  We say that~$G$ is \emph{$\ell^2$-acyclic} if~$b^{(2)}_k(G)=0$ for
  all~$k>0$.
\end{defn}

For example, amenable groups are $\ell^2$-acyclic~\cite[Corollary~6.75]{Lueck02}.
The following shows that
$\ell^2$-Betti numbers can be computed using classifying spaces for
families consisting of $\ell^2$-acyclic subgroups.

\begin{prop}[{\cite[Theorem~4.14]{Kammeyer19}}]\label{prop:Borel}
  Let $G$ be a group and $\F$ be a (con\-ju\-ga\-tion-closed) family of
  subgroups of~$G$ that consists of $\ell^2$-acyclic groups and
  contains the trivial subgroup.  Then the canonical~$G$-map~$EG\to
  \EFG$ induces a dimension-isomorphism in $\ell^2$-homology
  \[
  H^{(2)}_*(G\actson EG)\xrightarrow{\cong_{\dim}} H^{(2)}_*(G\actson \EFG) \,.
  \]
  In particular, $b^{(2)}_*(G)=b^{(2)}_*(G\actson \EFG)$.
\end{prop}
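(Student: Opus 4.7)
The plan is to compare the two $\ell^2$-homologies via a relative argument and reduce to a vanishing statement that is handled by a skeletal induction together with additivity of the von Neumann dimension.

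First, I would use that $\F$ contains the trivial subgroup to obtain the canonical $G$-map $f\colon EG\to \EFG$ (unique up to $G$-homotopy) and replace $\EFG$ by the mapping cylinder~$Z_f$, which is $G$-homotopy equivalent to $\EFG$. Then $(Z_f, EG)$ is a relative $G$-CW-pair whose relative cells are of the form $G/H\times D^n$ with $H\in \F$. The long exact sequence of the pair in $\ell^2$-homology reads
\[
  \cdots\to H^{(2)}_{n+1}\bigl(G\actson(Z_f,EG)\bigr)\to H^{(2)}_n(G\actson EG)\to H^{(2)}_n(G\actson Z_f)\to H^{(2)}_n\bigl(G\actson(Z_f,EG)\bigr)\to\cdots
\]
Since $\dim_{\calN G}$ is additive over exact sequences of $\calN G$-modules, the map induced by $f$ is a dimension-isomorphism as soon as $\dim_{\calN G}H^{(2)}_n(G\actson(Z_f,EG))=0$ for every~$n$.

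Second, to prove this vanishing, I would use the induction identity
\[
  \dim_{\calN G}H_*^G(G\times_H X;\calN G)=\dim_{\calN H}H^{(2)}_*(H\actson X),
\]
which follows from base change along $\calN G\otimes_{\calN H}(-)$ being exact and dimension-preserving on $\calN H$-modules (a standard fact of L\"uck's dimension theory). In particular, for $H\in\F$ we obtain $\dim_{\calN G}H_*^G(G\times_H (D^n,S^{n-1});\calN G)=0$ in every degree, because $H$ is $\ell^2$-acyclic and the relative pair $(D^n,S^{n-1})$ is contractible relatively (so only $b^{(2)}_k(H)$ with $k>0$ can contribute, and all of these vanish).

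Third, I would propagate this local vanishing to all of $(Z_f,EG)$ by induction on the relative $G$-skeleta. At each attaching diagram
\[
  \coprod_i G/H_i\times S^{n-1}\to Y_{n-1}\,,\qquad \coprod_i G/H_i\times D^n\to Y_n\,,
\]
a Mayer-Vietoris long exact sequence in $H_*^G(-;\calN G)$, combined with dimension additivity and the local vanishing above, yields $\dim_{\calN G}H_*^G(Y_n,EG;\calN G)=0$; equivalently, one can run the equivariant Atiyah-Hirzebruch spectral sequence converging to $H_*^G(Z_f,EG;\calN G)$, whose relevant $E^2$-contributions all vanish by the induction identity and the $\ell^2$-acyclicity of the stabilisers. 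Passing to the colimit gives the desired vanishing, and the \emph{In particular} statement follows by specialising to $Y=EG$ and $Y=\EFG$.

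The main obstacle is bookkeeping the skeletal induction, in particular verifying compatibility of $\dim_{\calN G}$ with the colimit of relative skeleta and with the induced-module structure of the cellular $\calN G$-chain modules; both points are genuinely needed but are standard in L\"uck's framework. The essential input is the induction identity above, which is the single place where the $\ell^2$-acyclicity hypothesis on members of $\F$ enters the argument.
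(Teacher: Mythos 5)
Your reduction via the mapping cylinder and the long exact sequence of the pair is sound as far as it goes: additivity of $\dim_{\calN G}$ indeed reduces the claim to showing $\dim_{\calN G}H^{(2)}_n(G\actson(Z_f,EG))=0$ for all~$n$. However, the local vanishing you then invoke is false. For $H\in\F$, the relative $G$-CW-pair $G\times_H(D^n,S^{n-1})$ has relative cellular $\calN G$-chain module $\calN G\otimes_{\Z H}\Z$ concentrated in degree~$n$, so by your own induction identity
\[
\dim_{\calN G}H_n^G\bigl(G\times_H(D^n,S^{n-1});\calN G\bigr)=b^{(2)}_0(H)=1/|H|\,,
\]
which is \emph{not} zero when $H$ is finite; in particular, since $\F$ contains the trivial subgroup, some relative cells of $(Z_f,EG)$ (e.g.\ the free cells coming from the interior of the cylinder) contribute dimension~$1$. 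The parenthetical claim that ``only $b^{(2)}_k(H)$ with $k>0$ can contribute'' is where the argument goes wrong: the degree-$n$ shift coming from $(D^n,S^{n-1})$ makes $b^{(2)}_0(H)$ contribute in degree~$n$, and $\ell^2$-acyclicity imposes no constraint on $b^{(2)}_0$. Consequently, the skeletal induction you propose cannot establish the vanishing: the relative cellular $\calN G$-chain modules have strictly positive dimension, and the vanishing of the relative $\ell^2$-homology must come from cancellation across boundary maps, which a cell-by-cell vanishing argument cannot detect.

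The paper's proof sidesteps this by a different device. Since $\F$ contains the trivial subgroup, $EG\times\EFG$ with the diagonal $G$-action is again a model for~$EG$, and one shows that the projection $EG\times\EFG\to\EFG$ onto the second factor is a dimension-isomorphism. Here the cell-by-cell comparison does work, because it compares two nonzero quantities rather than trying to show each is zero: over a cell $G/H\times D^p$ of~$\EFG$ the preimage is $G\times_H(\res^G_H EG\times D^p)\simeq_G G\times_H(EH\times D^p)$, and $\ell^2$-acyclicity of~$H$ gives that the projection induces a dimension-isomorphism over that cell (both sides have $\calN G$-dimension $1/|H|$ in the appropriate degree). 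This is exactly the content of the cited proof of~\cite[Theorem~6.54~(2)]{Lueck02}. To repair your argument you would need to compare source and target of this projection skeleton by skeleton, rather than asserting that the relative pair $(Z_f,EG)$ contributes nothing.
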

\begin{proof}
  Since $\F$ contains the trivial subgroup, $EG \times \EFG$ equipped
  with the diagonal~$G$-action is a model for~$EG$. The projection~$EG\times \EFG\to \EFG$
  onto the second factor induces a dimension-isomorphism
  \[ 
  H^{(2)}_*(G\actson EG)
  \cong 
  H^{(2)}_*(G\actson EG \times \EFG) \xrightarrow{\cong_{\dim}}
  H^{(2)}_*(G\actson \EFG) \,,
  \]
  because all members of~$\F$ are $\ell^2$-acyclic~\cite[proof of
    Theorem~6.54~(2)]{Lueck02}.
\end{proof}

For the vanishing theorem, we consider the following situation:
Let~$(X,A)$ be a CW-pair with~$X$ path-connected.
Suppose that~$A$ has only finitely many connected components and
let~$A=\coprod_{i\in I} A_i$ be a decomposition into connected components.
Assume that each~$A_i$ is~$\pi_1$-injective in~$X$ and
let~$(G,\calH)$ be a fundamental group pair for~$(X,A)$
(Example~\ref{ex:fundamental group pair}).
Let~$\F$ be a family of subgroups of~$G$ that contains the trivial subgroup.
Denote by~$p\colon \widetilde{X}\to X$ the universal covering.

Moreover, let $\calU$ be a relative~$\F$-cover of~$(X,A)$ 
(Definition~\ref{defn:relative cover}) with relative multiplicity~$\mult_A(\calU)$ 
(Definition~\ref{def:relative:mult}).
Let~$\widetilde{\calU}$ be the lifted $G$-invariant cover of~$\widetilde{X}$
(Example~\ref{ex:covers}) and
$(|N(\widetilde{\calU})|,|N_{p^{-1}(A)}(\widetilde{\calU})|)$ be its
equivariant nerve pair (Proposition~\ref{prop:cover downstairs}).
We will also use the notion of weakly convex relative covers
(Definition~\ref{defn:relative cover}).

\begin{thm}[Relative vanishing theorem for $\ell^2$-Betti numbers]\label{thm:l2rel}
  In the above situation,
  if $X$ and all the $(A_i)_{i\in I}$ are aspherical and $\F$ consists of
  $\ell^2$-acyclic groups, then we have
	\[
  b^{(2)}_*\bigl(G\actson (\widetilde{X},p^{-1}(A))\bigr)
  \le
  b^{(2)}_*\bigl(G\actson (|N(\widetilde{\calU})|,|N_{p^{-1}(A)}(\widetilde{\calU})|)\bigr) \,.
	\]
	In particular, if~$\calU$ is \admissible,
	then $$b^{(2)}_k\bigl(G\actson (\widetilde{X},p^{-1}(A))\bigr)=0$$ for all~$k\ge \mult_A(\calU)$.
\end{thm}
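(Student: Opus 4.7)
The plan is to mimic the approach of Theorem~\ref{thm:relative vanishing theorem}: factor the canonical classifying map of the universal covering pair through the equivariant nerve pair, and then show that this map induces a dimension-isomorphism in $\ell^2$-homology. Concretely, let $\EFGH$ be a model for the classifying space of $(G,\calH)$ with respect to $\F$ (Lemma~\ref{lem:existence classifying pair}). By Proposition~\ref{prop:cover downstairs}, the equivariant nerve pair $(|N(\widetilde{\calU})|,|N_{p^{-1}(A)}(\widetilde{\calU})|)$ is a $(G,\calH)$-CW-pair with isotropy in~$\F$, while by Example~\ref{ex:universal covering pair} the pair $(\widetilde X,p^{-1}(A))$ is a $(G,\calH)$-CW-pair with isotropy in $\TR\subset\F$. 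The universal property of $\EFGH$ then produces a factorisation, unique up to $G$-homotopy,
\[
(\widetilde X,p^{-1}(A))\xrightarrow{\widetilde\nu}
\bigl(|N(\widetilde{\calU})|,|N_{p^{-1}(A)}(\widetilde{\calU})|\bigr)
\xrightarrow{\varphi}\EFGH
\]
of the canonical map $f\colon (\widetilde X,p^{-1}(A))\to\EFGH$.

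The crucial step is to show that $f$ induces a dimension-isomorphism on $\ell^2$-homology. Asphericity of $X$ and of each connected component of $A$ makes $(\widetilde X,p^{-1}(A))$ a model of $E_{\TR}(G,\calH)$, so $f$ is $G$-homotopic to the canonical map $E_{\TR}(G,\calH)\to\EFGH$. To see that the latter is a dimension-isomorphism, I would compare the long exact sequences of the pairs $E_{\TR}(G,\calH)$ and $\EFGH$ in $\ell^2$-homology. On the ambient terms, Proposition~\ref{prop:Borel} applied to $G$ yields that $EG\to\EFG$ is a dimension-isomorphism. On the boundary terms, for each $i\in I$ the family $\F|_{H_i}$ consists of $\ell^2$-acyclic subgroups and contains the trivial subgroup (since $\F$ does), so Proposition~\ref{prop:Borel} applied to $H_i$ with respect to $\F|_{H_i}$ yields that $EH_i\to E_{\F|_{H_i}}H_i$ is a dimension-isomorphism; this transports to $G\times_{H_i}EH_i\to G\times_{H_i}E_{\F|_{H_i}}H_i$ via the induction isomorphism in $\ell^2$-homology (analogous to Lemma~\ref{lem:induction isomorphism}). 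The five-lemma for dimension-isomorphisms, applied degree by degree to the morphism of long exact sequences, then completes this step.

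Given the factorisation $H^{(2)}_*(f)=H^{(2)}_*(\varphi)\circ H^{(2)}_*(\widetilde\nu)$, dimension-injectivity of $H^{(2)}_*(f)$ forces the kernel of $H^{(2)}_*(\widetilde\nu)$ to have von Neumann dimension zero in every degree. By monotonicity of $\dim_{\calN G}$ this yields the claimed inequality
\[
b^{(2)}_*\bigl(G\actson(\widetilde X,p^{-1}(A))\bigr)\le
b^{(2)}_*\bigl(G\actson(|N(\widetilde{\calU})|,|N_{p^{-1}(A)}(\widetilde{\calU})|)\bigr).
\]
For the vanishing statement, when $\calU$ is weakly convex Proposition~\ref{prop:cover downstairs}~\ref{item:dim mult} gives that the relative dimension of the nerve pair equals $\mult_A(\calU)-1$. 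Hence its $\ell^2$-Betti numbers vanish in degrees $k\ge\mult_A(\calU)$, and the inequality propagates the vanishing.

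The main obstacle is the verification that $E_{\TR}(G,\calH)\to\EFGH$ is a dimension-isomorphism in $\ell^2$-homology; everything else is a formal consequence of the universal property and Proposition~\ref{prop:cover downstairs}. This requires carefully setting up the long exact sequences of $(G,\calH)$-CW-pairs with $\calN G$-coefficients, the induction isomorphism $H^{(2)}_*(G\actson G\times_{H_i}B)\cong H^{(2)}_*(H_i\actson B)$ on boundary pieces, and the five-lemma for dimension-isomorphisms of (not necessarily finitely generated) $\calN G$-modules.
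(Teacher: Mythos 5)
Your proposal is correct and follows essentially the same approach as the paper: factor the canonical map through the equivariant nerve pair, use asphericity together with Proposition~\ref{prop:Borel} and the five-lemma for dimension-isomorphisms to see that $f$ is a dimension-isomorphism, and then read off the inequality and the vanishing statement. The only difference is that you spell out the comparison of long exact sequences and the induction isomorphism in $\ell^2$-homology, which the paper compresses into a single sentence and a citation.
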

\begin{proof}
  By Proposition~\ref{prop:cover downstairs}, the equivariant nerve pair
  $(|N(\widetilde{\calU})|,|N_{p^{-1}(A)}(\widetilde{\calU})|)$ is a~
  $(G,\calH)$-CW-pair with isotropy in $\F$. 
  By the universal property of the classifying space~$\EFGH$,
  the canonical
  $G$-map $f\colon (\widetilde{X},p^{-1}(A))\to \EFGH$ factors 
  (up to~$G$-homotopy) through
  the equivariant nerve map~$\widetilde{\nu}$:
  \[
  \bigl(\widetilde{X},p^{-1}(A)\bigr)\xrightarrow{\widetilde{\nu}}
  \bigl(|N(\widetilde{\calU})|,|N_{p^{-1}(A)}(\widetilde{\calU})|\bigr)
  \to \EFGH \,.
  \]
  Since $X$ and all the $(A_i)_{i\in I}$ are aspherical by assumption,
  Proposition~\ref{prop:Borel} and a five lemma for
  dimension-isomorphisms~\cite[Section~2]{Sauergroupoids}
  imply that $f$ induces a dimension-isomorphism
  \[
  H^{(2)}_*\bigl(G\actson (\widetilde{X},p^{-1}(A))\bigr)
  \xrightarrow{\cong_{\dim}}
  H^{(2)}_*\bigl(G\actson \EFGH\bigr) \,.
  \]
  Thus, the above factorisation of the~$G$-map~$f$ shows 
  \[
  b^{(2)}_*\bigl(G\actson (\widetilde{X},p^{-1}(A))\bigr)
  \le b^{(2)}_*\bigl(G\actson (|N(\widetilde{\calU})|,|N_{p^{-1}(A)}(\widetilde{\calU})|)\bigr) \,,
  \]
  as claimed.
  
  To conclude the vanishing result, suppose that the relative cover~$\calU$ is
  \admissible\ with $\mult_A(\calU)=n$.
  Then we have
  $\dim(|N(\widetilde{\calU})|,|N_{p^{-1}(A)}(\widetilde{\calU})|)=n-1$ 
  by Proposition~\ref{prop:cover downstairs}~\ref{item:dim mult}
  and hence
  $b^{(2)}_k(G\actson (|N(\widetilde{\calU})|,|N_{p^{-1}(A)}(\widetilde{\calU})|))=0$
  for all degrees~$k\ge n$. 
\end{proof}

In the absolute case for the family~$\AME$, we recover a 
result of Sauer~\cite[Theorem~C]{Sauer09}:

\begin{cor}\label{cor:l2abs}
  Let $X$ be a path-connected aspherical CW-complex. Then we have
  $b^{(2)}_k(\pi_1(X)\actson \widetilde{X})=0$ for all~$k\ge
  \cat_\AME(X)$.
  \hfill\qedsymbol
\end{cor}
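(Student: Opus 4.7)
The plan is to deduce Corollary~\ref{cor:l2abs} as the absolute case of Theorem~\ref{thm:l2rel}, applied with the family~$\AME$ of amenable subgroups of~$\pi_1(X)$ and with the empty subspace~$A = \emptyset$. Almost all of the work has been done; what remains is to translate between the absolute and relative setups.

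First, I would reduce to the case where $n \coloneqq \cat_\AME(X)$ is finite, which is trivial. By Remark~\ref{rem:abscat}, in the absolute case $\cat_\AME(X,\emptyset)=\cat_\AME(X)$, and the converse of Lemma~\ref{lem:factorisation multiplicity} also holds, so that $\cat_\AME(X) = \relcat_\AME(X,\emptyset) = n$. Hence there exists a weakly convex relative $\AME$-cover~$\calU$ of the pair~$(X,\emptyset)$ with $\mult_\emptyset(\calU) = n$. Observe that when $A = \emptyset$, the conditions~\ref{item:RC1}, \ref{item:RC2}, and weak convexity in Definition~\ref{defn:relative cover} are vacuous, and the relative multiplicity~$\mult_\emptyset(\calU)$ coincides with the ordinary multiplicity~$\mult(\calU)$.

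Second, I would verify the hypotheses of Theorem~\ref{thm:l2rel}: the CW-complex~$X$ is aspherical by assumption; the empty subspace has no components, so the finite-components and~$\pi_1$-injectivity requirements are vacuous; and the family~$\AME$ contains the trivial subgroup and consists of amenable groups, which are $\ell^2$-acyclic. Taking the fundamental group pair to be $(G, \emptyset)$ with $G = \pi_1(X)$, Theorem~\ref{thm:l2rel} applies to~$(X,\emptyset)$ and~$\calU$, yielding
\[
b^{(2)}_k\bigl(G \actson (\widetilde X, p^{-1}(\emptyset))\bigr) = 0
\]
for all $k \ge \mult_\emptyset(\calU) = n$.

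Finally, since~$p^{-1}(\emptyset) = \emptyset$, the long exact sequence (or direct inspection) identifies $b^{(2)}_k(G \actson (\widetilde X, \emptyset))$ with $b^{(2)}_k(G \actson \widetilde X)$, giving the conclusion $b^{(2)}_k(\pi_1(X)\actson \widetilde X) = 0$ for all $k \ge \cat_\AME(X)$. There is no substantive obstacle here: the corollary is essentially a bookkeeping exercise confirming that the absolute case slots into the relative framework, with the key input being Remark~\ref{rem:abscat} to pass from $\cat_\AME(X)$ to an actual amenable cover of the required multiplicity.
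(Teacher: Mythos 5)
Your proposal is correct and is exactly the argument the paper intends: the corollary is stated with just a \qedsymbol\ because it is meant to follow immediately from Theorem~\ref{thm:l2rel} applied to the pair~$(X,\emptyset)$ with the family~$\AME$, using Remark~\ref{rem:abscat} to identify $\cat_\AME(X)$ with $\relcat_\AME(X,\emptyset)$. Your bookkeeping (vacuousness of \ref{item:RC1}, \ref{item:RC2} and weak convexity for $A=\emptyset$, $\ell^2$-acyclicity of amenable groups, and the identification $b^{(2)}_k(G\actson(\widetilde X,\emptyset))=b^{(2)}_k(G\actson\widetilde X)$) fills in the details correctly.
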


%%%%%%%%%%%%%%%%%%%%%%%%%%%%%%%%%%%%%%%%%%%%%%%%%%%%%%%%%%%%%%%%%%
\section{Glueing estimates for relative simplicial volume}\label{sec:bacglue}

The classical glueing estimates for simplicial
volume~\cite{vbc,BBFIPP, Kuessner} require that the boundary
components used in the glueing have amenable fundamental
groups. Replacing amenability with bounded acyclicity and the uniform
boundary condition also leads to (albeit weaker) glueing estimates for
simplicial volume (Theorem~\ref{thm:bacglue}). The uniform boundary
condition also has been used for versions of simplicial volume where
the strong glueing formulae are unknown and no suitable dual theory is
available~\cite{fauserloeh_varubc,fauserfriedlloeh}.

\begin{thm}[Vanishing inheritance for boundedly acyclic glueings]\label{thm:bacglue}
  Let $n\ge 3$ and $(M_i,\partial M_i)_{i\in I}$ be a finite
  collection of oriented compact connected $n$-manifolds.  Assume that
  every connected component of every boundary component~$\partial M_i$
  has boundedly acyclic fundamental group.  Let $\calN$ be a set
  of~$\pi_1$-injective boundary components of the~$(M_i)_{i\in I}$ and let
  $(M,\partial M)$ be obtained from~$(M_i,\partial M_i)_{i\in I}$ by a
  pairwise glueing of the boundary components in~$\calN$ (along
  orientation-reversing homeomorphisms).
        
  If~$\calN$, viewed as a set of subsets of~$M$, is uniformly
  boundedly acyclic of order~$n$ in~$M$ (Definition~\ref{defn:UBAc set of
    subspaces}) then the following are equivalent:
  \begin{enumerate}[label=\enum]
  \item\label{i:M}
    We have~$\sv{M,\partial M} = 0$;
  \item\label{i:Mi}
    For all~$i \in I$, we have~$\sv{M_i,\partial M_i} = 0$.
  \end{enumerate}
\end{thm}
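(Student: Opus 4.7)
The plan is to compare the comparison maps for $(M, \partial M)$ and the pieces $(M_i, \partial M_i)$ by routing through the intermediate pair $(M, \partial M \cup N)$, where $N \coloneqq \bigcup_{K \in \calN} K \subset M$ is the union of the glued boundary components, and then to conclude via the duality principle (Proposition~\ref{duality:principle}): $\sv{M, \partial M} = 0$ iff $\comp^n_{(M, \partial M)} = 0$, and similarly for each~$M_i$.

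\emph{Step 1 (vanishing on $N$).} The long exact sequence of the triple $(M, \partial M \cup N, \partial M)$ in bounded cohomology features the relative terms $H_b^k(\partial M \cup N, \partial M; \R) \cong H_b^k(N; \R)$, since $\partial M$ and $N$ are disjoint submanifolds. By hypothesis, every connected component of~$N$ has boundedly acyclic fundamental group, so Gromov's mapping theorem yields $H_b^k(N; \R) = 0$ for $k \geq 1$. Hence the inclusion induces an isomorphism $H_b^n(M, \partial M \cup N; \R) \xrightarrow{\cong} H_b^n(M, \partial M; \R)$.

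\emph{Step 2 (cutting along $N$).} Next, I identify
\[
H_b^n(M, \partial M \cup N; \R) \cong \bigoplus_{i \in I} H_b^n(M_i, \partial M_i; \R)
\]
via the glueing map $q\colon \bigsqcup_i (M_i, \partial M_i) \to (M, \partial M \cup N)$. This is the main technical step, and it is where the uniform bounded acyclicity of~$\calN$ of order~$n$ in~$M$ enters: the associated family~$\F_n\spann{\calN}$ is uniformly boundedly acyclic, so the resulting $\UUBC$-property (together with the $\pi_1$-injectivity of the components of~$\calN$ in~$M$) provides the uniform control on primitives needed to cut bounded cochains along~$N$ with bounded $\ell^1$-cost, in the spirit of Proposition~\ref{prop:stabacyc} and Corollary~\ref{cor:bac:set:computes:bounded:cohomology}. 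This is also the main obstacle: ordinary excision relies on subdivision, which is incompatible with uniform $\ell^1$-bounds, and the uniformity in the hypothesis is precisely what rescues the excision argument in bounded cohomology.

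\emph{Step 3 (ordinary cohomology and conclusion).} The analogue in ordinary cohomology (using $H^n(N; \R) = 0$, since $\dim N = n - 1$) gives a surjection $\bigoplus_i H^n(M_i, \partial M_i; \R) \twoheadrightarrow H^n(M, \partial M; \R)$. Via the relative fundamental classes, source and target identify with $\R^{|I|}$ and $\R$, and the surjection becomes the summation $\Sigma\colon (a_i) \mapsto \sum_i a_i$, since $[M, \partial M]$ maps to $\sum_i [M_i, \partial M_i]$ in homology for compatible orientations. Naturality of~$\comp^n$ produces the commutative square
\[
\begin{tikzcd}
\bigoplus_i H_b^n(M_i, \partial M_i; \R) \ar[r, "\cong"] \ar[d, "\oplus \comp^n_i"']
& H_b^n(M, \partial M; \R) \ar[d, "\comp^n_{(M, \partial M)}"] \\
\R^{|I|} \ar[r, two heads, "\Sigma"']
& \R \,.
\end{tikzcd}
\]
The image of $\oplus \comp^n_i$ is the product of the images of the individual $\comp^n_i$, so it lies in $\ker\Sigma = \{(a_i) : \sum_i a_i = 0\}$ iff each $\comp^n_i$ vanishes. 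Hence $\comp^n_{(M, \partial M)} = 0$ iff $\comp^n_{(M_i, \partial M_i)} = 0$ for every $i$, and (i)$\Leftrightarrow$(ii) follows from the duality principle.
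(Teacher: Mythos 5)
Your overall skeleton --- route through the pair $(M,\partial M\cup N)$, kill $N$ using bounded acyclicity, compare with the pieces, and conclude via the duality principle --- is the same as the paper's strategy for the implication (i)$\Rightarrow$(ii). Step~1 and Step~3 are fine. The problem is Step~2, which you yourself identify as ``the main technical step'': you assert an excision-type isomorphism $H_b^n(M,\partial M\cup N;\R)\cong\bigoplus_{i}H_b^n(M_i,\partial M_i;\R)$ and justify it only by saying that uniform bounded acyclicity provides control ``in the spirit of'' Proposition~\ref{prop:stabacyc} and Corollary~\ref{cor:bac:set:computes:bounded:cohomology}. That is not an argument, and those results compute $H_b^*(G;\linf(S,\R))$ for a uniformly boundedly acyclic action; they do not by themselves produce a Mayer--Vietoris or excision decomposition of bounded cohomology. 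Worse, the statement you assert is stronger than what is needed and is not true in general: under the identifications $H_b^n(M,\partial M\cup N)\cong H_b^n(\pi_1(M))$ and $H_b^n(M_i,\partial M_i)\cong H_b^n(\pi_1(M_i))$ (both of which use bounded acyclicity of \emph{all} boundary components, a reduction you skip), the map in question is restriction to the vertex groups of a graph-of-groups decomposition of $\pi_1(M)$, and such restrictions are typically far from injective (already $H^2_b$ of a free group, i.e.\ of a graph of trivial groups, is enormous while the vertex groups contribute nothing). A check of your own final diagram shows that only \emph{surjectivity} of the restriction $H_b^n(M,\partial M\cup N)\to\bigoplus_i H_b^n(M_i,\partial M_i)$ is ever used, and that surjectivity is exactly what the paper proves --- via Theorem~\ref{thm:graphbac}, which computes the bounded cohomology of the fundamental group of the graph of groups through the uniformly boundedly acyclic action on the set $S=(G\times V)\sqcup\coprod_{e}G/G_e$ (this is where the order-$n$ hypothesis on $\calN$ enters, through Remark~\ref{rem:alt:computes:bc}) and then invokes the explicit cochain-level right inverse of Bucher et al. None of this machinery appears in your write-up, so the heart of the proof is missing.

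A secondary remark: the paper proves (ii)$\Rightarrow$(i) by a completely separate and much more elementary route, namely the uniform boundary condition for the glueing locus (boundedly acyclic groups satisfy $\UBC$ in all degrees, and finiteness of $\calN$ gives a joint constant), which yields the quantitative filling estimate of Proposition~\ref{prop:ubcupperestimate}. Your approach makes this easy direction depend on the unproven Step~2 as well; if you repair Step~2 by proving only surjectivity (as you must), your diagram still yields (ii)$\Rightarrow$(i), but it is worth knowing that this direction needs none of the uniform bounded acyclicity hypotheses and gives an explicit upper bound on $\sv{M,\partial M}$.
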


The implication \ref{i:Mi} $\Rightarrow$ \ref{i:M} is proved in
Section~\ref{subsec:ubcglue}. The implication \ref{i:M}
$\Rightarrow$ \ref{i:Mi} is established in
Section~\ref{subsec:lowerglue}.

\begin{rem}
  More generally, the conclusion of Theorem~\ref{thm:bacglue} holds
  for \emph{compatible} glueings~\cite{BBFIPP}, where the boundary
  components in~$\calN$ need not be~$\pi_1$-injective.
  On the other hand, it remains unclear whether the
  assumption of bounded acyclicity on the boundary components that are
  not in~$\calN$ can be dropped.
\end{rem}

\begin{rem}
  In the situation of Theorem~\ref{thm:bacglue}, if the collection of
  fundamental groups of all members of~$\calN$ is \emph{malnormal}
  in~$\pi_1(M)$, then the uniform bounded acyclicity condition is
  automatically satisfied as soon as all members of~$\calN$ have
  boundedly acyclic fundamental group.
\end{rem}

%%%%%%%%%%%%%%%%%%%%
\subsection{Upper glueing estimates via the uniform boundary condition}\label{subsec:ubcglue}

We begin with the, easier, upper glueing estimate; this estimate works
over all normed rings and also gives rough estimates in the
non-vanishing case:

\begin{prop}\label{prop:ubcupperestimate}
  Let $R$ be a normed ring, let $K \in \R_{>0}$, let $I$ be a finite
  set, and let $(M_i,\partial M_i)_{i \in I}$ be a finite collection
  of oriented compact connected manifolds of the same
  dimension~$n$. Moreover, let $(M,\partial M)$ be obtained
  from~$(M_i,\partial M_i)_{i \in I}$ by a pairwise glueing (along
  orientation-reversing homeomorphisms) of a set~$\calN$ of boundary
  components such that $K$ is a $\UBC_{n-1}$-constant of~$C_*(\bigcup \calN;R)$.  Then
  \begin{align*}
    \sv{M,\partial M}_R
    & \leq \bigl( 1+ K \cdot (n+1) \bigr) \cdot \sum_{i \in I} \sv{M_i,\partial M_i}_R \,.
  \end{align*}
  In particular, if $\sv{M_i,\partial M_i}_R = 0$ for all~$i \in I$,
  then~$\sv{M,\partial M}_R = 0$.
\end{prop}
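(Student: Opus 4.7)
The plan is to produce a relative fundamental cycle of~$(M,\partial M)$ with controlled $\ell^1$-norm by combining relative fundamental cycles of the pieces~$(M_i,\partial M_i)$ and fixing up the resulting boundary contribution along the glueing loci using~$\UBC_{n-1}$.

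First, for each~$i \in I$ and each~$\varepsilon > 0$, I would choose a relative fundamental cycle~$c_i \in C_n(M_i,\partial M_i;R)$ with
\[
  |c_i|_1 \leq \sv{M_i,\partial M_i}_R + \varepsilon \,.
\]
Viewing everything inside~$M$ via the glueing, set~$c \coloneqq \sum_{i\in I} c_i \in C_n(M;R)$. Since each $\partial c_i$ is supported in~$\partial M_i$, which is a disjoint union of boundary components, I can split~$\partial c_i$ according to whether each component lies in~$\calN$ (the glueing locus) or in~$\partial M$. Taking the sum over~$i$ yields a decomposition
\[
  \partial c = z + w \,, \qquad
  z \in C_{n-1}\bigl( \textstyle\bigcup \calN;R\bigr) \,,\ w \in C_{n-1}(\partial M;R) \,,
\]
with disjoint supports. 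From $\partial^2 c = 0$ and the fact that the supports of~$\partial z$ and~$\partial w$ remain disjoint (inside~$\bigcup \calN$ and~$\partial M$ respectively), it follows that both $z$ and~$w$ are separately cycles. In particular, $z \in C_{n-1}(\bigcup \calN;R)$ is a cycle.

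Next, I would apply the hypothesis that $K$ is a~$\UBC_{n-1}$-constant of~$C_*(\bigcup \calN;R)$ to produce a chain~$b \in C_n(\bigcup \calN;R)$ with $\partial b = z$ and
\[
  |b|_1 \leq K \cdot |z|_1 \leq K \cdot |\partial c|_1 \leq K \cdot (n+1) \cdot \sum_{i \in I} |c_i|_1 \,.
\]
Set $c' \coloneqq c - b \in C_n(M;R)$. By construction $\partial c' = w$ is supported in~$\partial M$, so $c'$ is a relative cycle in~$(M,\partial M)$. Moreover, $c$ and~$c'$ differ by the chain~$b$ supported away from~$\partial M$, so they define the same class in~$H_n(M, \partial M \cup \bigcup \calN; R)$; since $c$ represents the fundamental class there (being an oriented sum of pieces) and the canonical map~$H_n(M,\partial M;R) \to H_n(M,\partial M \cup \bigcup \calN;R)$ sends the relative fundamental class of~$(M,\partial M)$ to this class, $c'$ represents~$[M,\partial M]_R$.

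Combining the norm estimates gives
\[
  \sv{M,\partial M}_R \leq |c'|_1 \leq |c|_1 + |b|_1 \leq \bigl(1 + K(n+1)\bigr) \sum_{i \in I} |c_i|_1 \,,
\]
and letting~$\varepsilon \to 0$ yields the claim. The one non-routine point is the identification of the class represented by~$c'$; I expect the cleanest way to justify it is through an excision/Mayer--Vietoris argument comparing~$H_n(M,\partial M;R)$ with~$\bigoplus_i H_n(M_i,\partial M_i;R)$ via the glueing, rather than a direct chain-level manipulation.
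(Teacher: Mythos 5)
Your overall strategy is the same as the paper's (sum the relative fundamental cycles of the pieces, fill the contribution along the glueing loci using $\UBC_{n-1}$, and estimate the norm of the corrected cycle), but there is a genuine gap at the point where you invoke the uniform boundary condition. You only verify that the part~$z$ of~$\partial c$ supported on~$\bigcup\calN$ is a \emph{cycle}, and then ask $\UBC_{n-1}$ to fill it. By definition, $\UBC_{n-1}$ for~$C_*(\bigcup\calN;R)$ only provides controlled fillings for chains that already lie in~$\im\partial_n$, i.e.\ that are null-homologous in~$\bigcup\calN$. Since $\bigcup\calN$ is a disjoint union of closed oriented $(n-1)$-manifolds, $H_{n-1}(\bigcup\calN;R)$ is non-trivial, so being a cycle is strictly weaker than being a boundary, and the chain~$b$ you want need not exist for an arbitrary cycle. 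The missing step is precisely where the hypothesis that the glueing is performed along \emph{orientation-reversing} homeomorphisms must be used (you never invoke it): on each glueing locus, the restrictions of~$\partial c_i$ and~$\partial c_j$ coming from the two glued boundary components are fundamental cycles representing opposite generators of the top homology of that locus, so their sum is null-homologous there; summing over the loci shows $z\in\im\partial_n$ in~$C_*(\bigcup\calN;R)$, and only then does $\UBC_{n-1}$ apply. (Had the glueing been orientation-preserving, $z$ would represent twice a fundamental class and no filling would exist.) This is exactly the point the paper makes explicit before applying $\UBC_{n-1}$.

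The remaining parts are fine. Your norm bookkeeping agrees with the paper's, and your identification of the class of~$c'=c-b$ is a legitimate alternative to the paper's ``check local contributions'' argument: since $b$ is supported in~$\bigcup\calN$, the classes of~$c$ and~$c'$ agree in~$H_n(M,\partial M\cup\bigcup\calN;R)$, the map $H_n(M,\partial M;R)\to H_n(M,\partial M\cup\bigcup\calN;R)$ is injective because $H_n(\bigcup\calN;R)=0$ for dimension reasons, and excision along the glueing loci identifies the target with~$\bigoplus_{i\in I}H_n(M_i,\partial M_i;R)$, where both classes correspond to the collection of fundamental classes. Once the null-homologous issue above is repaired, your argument is complete.
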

\begin{proof}
  This is the standard filling
  argument~\cite[Example~6.18]{loeh_phd}\cite[Remark~6.2]{BBFIPP},
  adapted to the general $\UBC$-setting; for the sake of completeness,
  we give the argument:

  For notational simplicity, we view the $(M_i)_{i\in I}$ as subspaces of the
  glued manifold~$M$. Moreover, we write $N \subset M$ for the
  (disjoint) union of the glueing loci. 
  Hence, $K$ is a
  $\UBC_{n-1}$-constant for~$N$.  Let $(z_i \in C_n(M_i;R))_{i \in I}$
  be a collection of relative fundamental cycles of~$(M_i,\partial
  M_i)_{i \in I}$. As we glue along orientation-reversing
  homeomorphisms, the chain
  \[ b \coloneqq \sum_{i \in I} \partial z_i|_{N} \in C_{n-1}(N ; R)
  \]
  is null-homologous. By $\UBC_{n-1}$ for~$C_*(N;R)$,
  there exists a chain~$c \in C_n(N;R)$ with
  \[ \partial c = b
  \qand
  |c|_1 \leq K \cdot |b|_1
  \leq K \cdot \sum_{i \in I} |\partial z_i|_1
  \leq K \cdot (n+1) \cdot \sum_{i \in I} |z_i|_1 \,.
  \]
  Then $z \coloneqq \sum_{i \in I} z_i - c \in C_n(M;R)$ is a relative cycle
  on~$(M,\partial M)$; checking the local contributions on the
  components~$(M_i,\partial M_i)$ shows that $z$ is a relative
  $R$-fundamental cycle of~$(M,\partial M)$. Therefore, we obtain
  \begin{align*}
    \sv{M,\partial M}_R
    & \leq |z|_1
      \leq \sum_{i \in I} |z_i|_1 + |c|_1
    \\
    & \leq \sum_{i \in I} |z_i|_1 + K \cdot (n+1) \cdot \sum_{i \in I} |z_i|_1 \,.
  \end{align*}
  Taking the infimum over all relative fundamental cycles~$(z_i)_{i \in I}$
  proves the claim.
\end{proof}

\begin{proof}[Proof of Theorem~\ref{thm:bacglue}, \ref{i:Mi} $\Rightarrow$ \ref{i:M}]
  All boundedly acyclic groups satisfy the uniform boundary condition
  in all degrees (Theorem~\ref{thm:ubcbanach}). As only finitely many
  components are involved, we also find a joint $\UBC_{n-1}$-constant
  for~$C_*(\bigcup\calN;\R)$.  Applying Proposition~\ref{prop:ubcupperestimate}
  thus proves the implication~\ref{i:Mi}~$\Rightarrow$~\ref{i:M}.
\end{proof}

In the same way, we also obtain the following estimate for the locally
finite simplicial volume~\cite{vbc,loeh_phd} for interiors of compact
manifolds with $\UBC$-boundary; similar results have been obtained
previously for amenable boundaries (or other more restrictive
conditions on the boundary) via the uniform boundary
condition~\cite{loehsauer_hilbert,loeh_phd} or bounded
cohomology~\cite{Kim-Kuessner15}.

\begin{prop}\label{prop:svlfgen}
  Let $R$ be a normed ring and let $M$ be an oriented connected
  compact $n$-manifold with boundary satisfying the following properties:
  \begin{itemize}
  \item We have~$\sv {\partial M}_R = 0$;
  \item 
    The boundary~$\partial M$ satisfies~$\UBC_{n-1}$ over~$R$; 
    let $K$ be a $\UBC_{n-1}$-constant for~$C_*(\partial M;R)$.
  \end{itemize}
  Then
  \[ \svlfR {M^\circ} \leq \bigl( K \cdot (n+1) + 1 \bigr) \cdot \sv{M,\partial M}_R \,.
  \]
\end{prop}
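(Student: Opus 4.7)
The plan is to adapt the filling argument of Proposition~\ref{prop:ubcupperestimate} to the non-compact setting, building a locally finite $R$-fundamental cycle on the open collar completion
\[
  W \coloneqq M \cup_{\partial M}\bigl(\partial M \times [0, \infty)\bigr) \cong M^\circ.
\]
Given $\varepsilon > 0$, I would first pick a relative $R$-fundamental cycle $c \in C_n(M; R)$ with $|c|_1 < \sv{M, \partial M}_R + \varepsilon$. Since $\sv{\partial M}_R = 0$, I would also fix an auxiliary $\delta > 0$ and, for each $k \geq 0$, an $R$-fundamental cycle $z_k \in C_{n-1}(\partial M; R)$ of norm at most $\delta \cdot 2^{-k}$; the rapid decay will ensure summability of all correction terms.

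The construction closes up $c$ at the end of $W$ in two stages. First, apply $\UBC_{n-1}$ on $\partial M$ to obtain a chain $b \in C_n(\partial M; R)$ with $\partial b = \partial c - z_0$ and $|b|_1 \leq K \cdot ((n+1)|c|_1 + \delta)$, viewed as supported on $\partial M \times \{0\} \subset W$. Second, for each $k \geq 0$ construct a telescope piece $w_k \in C_n(\partial M \times [k, k+1]; R)$ with $\partial w_k = z_{k+1}|_{k+1} - z_k|_k$, where $z_k|_j$ denotes the push-forward of $z_k$ to the level $\partial M \times \{j\}$. Concretely, I would set $w_k \coloneqq P_k + b_k$, where $P_k$ is the classical prism chain on $z_k$ (with $\partial P_k = z_k|_{k+1} - z_k|_k$ and $|P_k|_1 \leq n \cdot |z_k|_1$) and $b_k$ is a $\UBC_{n-1}$-filling on $\partial M \times \{k+1\}$ of the null-homologous cycle $z_{k+1}|_{k+1} - z_k|_{k+1}$, of norm at most $K \cdot (|z_{k+1}|_1 + |z_k|_1)$. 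Geometric decay of $|z_k|_1$ then yields $\sum_{k \geq 0} |w_k|_1 = O(\delta)$.

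Setting $Z \coloneqq c - b + \sum_{k \geq 0} w_k \in C_n^{\lf}(W; R)$, local finiteness is immediate since each slab $\partial M \times [k, k+1]$ carries only $w_k$. A slab-by-slab check using the telescoping identities $\partial w_k = z_{k+1}|_{k+1} - z_k|_k$ and $|z_k|_1 \to 0$ gives $\partial Z = 0$. The norm estimate
\[
  |Z|_1 \leq \bigl(1 + K(n+1)\bigr) \cdot |c|_1 + (2n + 4K) \cdot \delta
\]
follows directly, and letting $\delta \to 0$ and then $\varepsilon \to 0$ yields the claimed bound.

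The main technical point is to verify that $Z$ represents the locally finite $R$-fundamental class $[M^\circ]_{\lf}$, and not some other element of $H_n^{\lf}(M^\circ; R)$. This should follow from the standard identification $H_n(M, \partial M; R) \cong H_n^{\lf}(M^\circ; R)$ induced by the collar: our $Z$ is by construction the chain-level realisation of the ``closing up at infinity'' procedure applied to the relative fundamental cycle $c$, so it lands on $[M^\circ]_{\lf}$ up to a locally finite boundary. The sign bookkeeping in the prism construction and in the telescoping of the boundary should be checked carefully, but is otherwise routine.
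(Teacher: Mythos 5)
Your proposal is correct and follows essentially the same argument as the paper: both close up a relative fundamental cycle along the collar $\partial M\times[0,\infty)$ by $\UBC_{n-1}$-filling against a sequence of fundamental cycles $z_k$ of $\partial M$ with geometrically decaying norms, spread out via prisms over the slabs, and let the auxiliary parameter tend to zero. The only (cosmetic) difference is that you split off the level-$0$ filling of $\partial c - z_0$ explicitly before telescoping, whereas the paper packages the whole correction as a single locally finite chain $z$ on the cylinder with $\partial z=-\partial c$.
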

\begin{proof}
  We will follow the previously known $\UBC$-arguments: Let $c \in
  C_n(M;R)$ be a relative fundamental cycle of~$(M,\partial M)$ and
  let $\varepsilon \in \R_{>0}$.

  Because of $\sv{\partial M}_R = 0$, there exists a
  sequence~$(z_k)_{k \in \N}$ in~$C_{n-1}(\partial M;R)$ of
  fundamental cycles of~$\partial M$ with~$|z_k|_1 \leq \varepsilon
  \cdot 1/2^k$ for all~$k \in \N$.  From this sequence, we can
  construct a locally finite relative fundamental cycle~$z \in
  C_n^{\lfop}(\partial M \times \R_{\geq0};R)$ of the half-open
  cylinder~$\partial M \times \R_{\geq0}$ with $\partial z = -
  \partial c$ and
  \begin{align*}
    |z|_1
    & \leq K \cdot \bigl( |\partial c|_1 + |z_0|_1\bigr) + n \cdot |z_0|_1
    + \sum_{k=0}^\infty \bigl(K \cdot \bigl(|z_k|_1 + |z_{k+1}|_1 \bigr) + n \cdot |z_{k+1}|_1\bigr)
    \\
    & \leq K \cdot |\partial c|_1 + n \cdot \varepsilon \cdot 2
    + K \cdot 2 \cdot \varepsilon \cdot 2
    \\
    & \leq K \cdot (n+1) \cdot |c|_1 + 2 \cdot \varepsilon \cdot (n + 2 \cdot K)
  \end{align*}
  by $\UBC$-filling the differences between subsequent~$z_k$ with
  ``small'' chains and then spreading out the result over the
  half-open cylinder~\cite[proof of Theorem~6.1]{loeh_phd}. 
  Here we filled the ``cylinders" by using the canonical triangulation
  of $\Delta^{n-1} \times [0,1]$ into $n$~simplices of dimension~$n$.

  Moreover, $c+z$ is a locally finite fundamental cycle of~$M
  \cup_{\partial M} (\partial M \times \R_{\geq 0}) \cong M^\circ$ (via
  the topological collar theorem) and so
  \[ \svlfR{M^\circ}
  \leq |c+z|_1
  \leq |c|_1 + K \cdot (n+1) \cdot |c|_1 + 2 \cdot \varepsilon \cdot (n + 2 \cdot K) \,.
  \]
  Taking the infimum over~$\varepsilon \to 0$
  and then over all relative fundamental cycles~$c$ thus shows that
  \[ \svlfR{M^\circ}
  \leq \sv{M,\partial M}_R
  + K \cdot (n+1) \cdot \sv{M,\partial M}_R
  \,,
  \]
  as claimed.
\end{proof}

\begin{cor}\label{cor:svlfzero}
  Let $M$ be an oriented connected compact $n$-manifold with boundary
  satisfying the following properties:
  \begin{itemize}
  \item We have~$\sv{M,\partial M} = 0$;
  \item The boundary~$\partial M$ satisfies~$\UBC_{n-1}$.
  \end{itemize}
  Then $\svlf{M^\circ} = 0$.
\end{cor}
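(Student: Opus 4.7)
The plan is to reduce Corollary~\ref{cor:svlfzero} directly to Proposition~\ref{prop:svlfgen} (with $R = \R$), so the only thing to check is that the extra hypothesis of the proposition, namely $\|\partial M\|_\R = 0$, follows from the vanishing of the relative simplicial volume of $(M, \partial M)$.

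For this I would use the standard fact that the connecting homomorphism $\partial \colon H_n(M,\partial M;\R) \to H_{n-1}(\partial M;\R)$ sends the relative fundamental class $[M,\partial M]$ to the fundamental class $[\partial M]$ of the closed $(n-1)$-manifold~$\partial M$. On chains this is realised by the singular boundary operator, which satisfies the trivial norm estimate $|\partial c|_1 \le (n+1)\cdot |c|_1$ for every $c \in C_n(M;\R)$. Taking the infimum over all relative fundamental cycles~$c$ of $(M,\partial M)$ yields
\[
\|\partial M\| \;\le\; (n+1)\cdot \|M,\partial M\| \;=\; 0,
\]
so that $\|\partial M\| = 0$.

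With this in hand, both hypotheses of Proposition~\ref{prop:svlfgen} are satisfied over $R = \R$: the boundary has vanishing simplicial volume and satisfies $\UBC_{n-1}$ by assumption. The conclusion $\svlf{M^\circ} \le (K\cdot(n+1)+1)\cdot \|M,\partial M\| = 0$ then gives exactly $\svlf{M^\circ} = 0$.

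There is essentially no obstacle here, as the work has been done in Proposition~\ref{prop:svlfgen}; the only mild subtlety is remembering that the $\UBC$-hypothesis on $\partial M$ is used only to control the fillings in the half-open collar, whereas the vanishing of $\|\partial M\|$ needed to initiate that filling procedure comes for free from the boundary map on singular chains applied to a sequence of almost-minimising relative fundamental cycles of $(M,\partial M)$.
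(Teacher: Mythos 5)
Your proposal is correct and follows exactly the paper's own argument: the paper likewise deduces $\sv{\partial M} \leq (n+1)\cdot\sv{M,\partial M} = 0$ from the boundary operator's norm estimate on relative fundamental cycles and then invokes Proposition~\ref{prop:svlfgen}. No issues.
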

\begin{proof}
  Since $\sv{\partial M} \leq (n+1) \cdot \sv{M,\partial M}= 0$,
  we can apply Proposition~\ref{prop:svlfgen}.
\end{proof}

In particular, the conditions on the boundary in
Proposition~\ref{prop:svlfgen} (over~$\R$) and
Corollary~\ref{cor:svlfzero} are satisfied if the boundary 
has vanishing bounded cohomology in positive degrees.

\begin{rem}[Group-theoretic Dehn fillings]
  A classical application of upper glueing estimates are (generalised)
  Dehn fillings of manifolds~\cite{FujiwaraManning,BBFIPP}.
  The simplicial volume of \emph{group-theoretic} Dehn fillings was
  recently investigated~\cite{Petrosyan-Sun21}.
  In particular, the simplicial volume does not
  increase when performing a group-theoretic Dehn filling with
  resulting peripheral subgroups that are amenable and of small
  cohomological dimension~\cite[Theorem~6.3]{Petrosyan-Sun21}.  One
  obtains an analogous result for the vanishing behaviour of
  simplicial volume if amenability is replaced by bounded acyclicity.
\end{rem}

%%%%%%%%%%%%%%%%%%%%
\subsection{Lower glueing estimates via bounded acyclicity}\label{subsec:lowerglue}

We prove the lower glueing estimate~\ref{i:M}
$\Rightarrow$~\ref{i:Mi} of Theorem~\ref{thm:bacglue}, adapting the
argument in the amenable case by Bucher, Burger, Frigerio, Iozzi,
Pagliantini, and Pozzetti~\cite{BBFIPP}.

In this section, all [bounded] cohomology groups are taken with trivial
coefficients in~$\IR$.

\begin{proof}[Proof of Theorem~\ref{thm:bacglue}, \ref{i:M} $\Rightarrow$ \ref{i:Mi}]
  We proceed by contraposition, i.e., we assume that one of the
  building blocks satisfies~$\sv{M_i,\partial M_i} > 0$ and show that
  $\sv{M,\partial M} > 0$.
  By the duality principle (Proposition~\ref{duality:principle}),
  it suffices to show that the comparison map~$H_b^n(M,\partial M) \to
  H^n(M,\partial M)$ is non-zero.
  
  Let~$N\subset M$ be the union of the glueing loci. 
  We consider the following diagram. Here, all unlabelled
  arrows are induced by inclusions (and direct sums) and the
  maps labelled by~$\ev$ are given by evaluation on the fundamental
  class.
  \[ \xymatrix{%
    H_b^n(M)
    \ar[dd]_-{(**)}
    &
    H_b^n(M,\partial M)
    \ar[l]_-{\cong}
    \ar[r]_-{\comp_{(M,\partial M)}^n}
    &
    H^n(M,\partial M)
    \ar[r]^-{\cong}_-{\ev}
    &
    \R
    \\
    &
    H_b^n(M,N \cup \partial M)
    \ar[u]^-{\cong}
    \ar[r]_*+<.5em>{_{\comp_{(M,N \cup \partial M)}^n}}
    \ar[d]_-{(***)}
    &
    H^n(M,N \cup \partial M)
    \ar[u]
    \ar[d]
    \\
    \bigoplus_{i \in I} H_b^n(M_i)
    &
    \bigoplus_{i \in I} H_b^n(M_i,\partial M_i)
    \ar[r]_*+<.5em>{_{\bigoplus_{i \in I} \comp_{(M_i,\partial M_i)}^n}}^{(*)}
    \ar[l]_-{\cong}
    &
    \bigoplus_{i \in I} H^n(M_i,\partial M_i)
    \ar[r]^-{\cong}_-{\ev}
    & \bigoplus_I \R
    \ar[uu]_-{\sumop}
  }
  \]
  This diagram is commutative: The leftmost and middle squares are
  commutative by functoriality of bounded cohomology and naturality of
  the comparison map. The rightmost square is commutative because one
  can construct a relative fundamental cycle of~$(M,\partial M)$ out
  of relative fundamental cycles of the~$(M_i,\partial M_i)$ plus a
  chain on~$N$ (see proof of Proposition~\ref{prop:ubcupperestimate}).

  By the duality principle and the hypothesis that one of
  the~$\sv{M_i,\partial M_i}$ is non-zero, the arrow~$(*)$
  is non-zero; isolating the corresponding index shows that also
  the composition~$\sumop \circ \ev \circ (*)$ is non-zero.

  In the leftmost square, both horizontal arrows and the upper right
  vertical arrow are isomorphisms by bounded acyclicity of all
  boundary components (and the long exact sequence of pairs in bounded
  cohomology).
  In Section~\ref{subsec:proof graphs}, using graphs of
  groups (Theorem~\ref{thm:graphbac} and
  Example~\ref{exa:graphglueingmfd}), we will show that the left vertical
  arrow~$(**)$ induced by the inclusions~$M_i \hookrightarrow M$ is
  surjective.

  Therefore, the leftmost square shows that $(***)$ is surjective.
  Together with the non-triviality of the composition~$\sumop
  \circ \ev \circ (*)$, we thus obtain that the comparison
  map~$H_b^n(M,\partial M) \to H^n(M,\partial M)$ must be non-zero, as
  desired.
\end{proof}

%%%%%%%%%%%%%%%
\subsection{Graphs of groups with boundedly acyclic edge groups}
\label{subsec:proof graphs}

We consider the bounded cohomology of finite graphs of groups with
boundedly acyclic edge groups in relation to the bounded cohomology of
the vertex groups.  We adapt the proof in the amenable case by Bucher
et~al.~\cite{BBFIPP} to the boundedly acyclic situation by using
uniformly boundedly acyclic actions instead of amenable actions.

We first fix basic notation.

\begin{defn}[Graph]
  A \emph{graph} is a tuple~$\graf = (V,E, o, t, \overline\args)$, consisting of a
  set~$V$, a set~$E$, a map~$(o,t) \colon E \to V^2$, and a fixed
  point-free involution~$\overline\args \colon E \to E$ with
  \[ o(e) = t (\overline e)
  \]
  for all~$e \in E$. 
  The elements of~$V$ are called \emph{vertices}, the
  elements of~$E$ are called \emph{edges}.
  The set of \emph{geometric edges} is defined by 
  \[ \overline E \coloneqq \bigl\{ (e,\overline e) \bigm| e \in E \bigr\} \,.
  \]
\end{defn}

\begin{defn}[Graph of groups]
  Let $\graf = (V,E, o,t, \overline \args)$ be a finite graph (i.e.,
  $V$ and $E$ are finite). A \emph{graph of groups~$\gog$ over~$\graf$}
  is a $\graf$-shaped diagram in the category of groups and injective
  group homomorphisms, i.e., $\gog$ consists of the following data:
  \begin{itemize}
  \item A map that associates a group~$G_v$ to each~$v \in V$;
  \item A map that associates a group~$G_e$ to each~$e \in E$
    such that $G_e = G_{\overline e}$;
  \item A map that associates to each edge~$e \in E$ an
    injective group homomorphism~$h_e \colon G_e \to G_{t(e)}$.
  \end{itemize}
\end{defn}

If $G$ is the fundamental group of a graph of groups, then the vertex
and edge groups admit canonical inclusions
into~$G$~\cite[Chapter~5]{serre_trees} and we will identify these
groups with their image inside of~$G$.

We consider finite graphs of groups with boundedly acyclic edge
groups, in analogy with the amenable case~\cite[Theorem~1.1]{BBFIPP};
more precisely:

\begin{setup}\label{setup:graph}
  \hfil
  \begin{itemize}
  \item Let $n \geq 1$;
  \item Let $\graf = (V,E,o,t,\overline{\args})$ be a finite graph;
  \item Let $\gog$ be a graph of groups over~$\graf$;
  \item Let $G$ be the fundamental group of~$\gog$;
    for~$v\in V$, we denote the corresponding inclusion
    by~$i_v \colon G_v \hookrightarrow G$;
  \item The edge groups~$(G_e)_{e \in E}$ are \emph{uniformly boundedly acyclic
    of order~$n$ in~$G$}, i.e., the set
    \[ \biggl\{ \bigcap_{i=1}^{n} g_i G_{e_i} g_i^{-1} 
    \biggm| g_1,\dots, g_n \in G,\ e_1,\dots,e_n \in E
    \biggr\}
    \]
    of subgroups of~$G$ is uniformly boundedly acyclic.
  \end{itemize}
\end{setup}

\begin{thm}\label{thm:graphbac}
  In the situation of Setup~\ref{setup:graph}, let $n \geq 3$ and
  $k \in \{3,\dots, n\}$. Then the map
  \[ \bigoplus_{v \in V} H_b^k(i_v) \colon
  H_b^k(G) \to \bigoplus_{v \in V} H_b^k(G_v)
  \]
  induced by the inclusions is surjective.
\end{thm}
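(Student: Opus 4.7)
The plan is to adapt the amenable-case argument of Bucher et al.~\cite{BBFIPP} to the uniformly boundedly acyclic setting, exploiting the $G$-action on the Bass--Serre tree~$T$ of~$\gog$ together with the computation of bounded cohomology via uniformly boundedly acyclic actions from Section~\ref{subsec:UBAc modules}. The group~$G$ acts on~$T$ with vertex stabilisers $G$-conjugate to~$(G_v)_{v\in V}$ and edge stabilisers $G$-conjugate to~$(G_e)_{e\in E}$. By hypothesis, stabilisers of $k$-tuples in~$E(T)^k$ for~$k\leq n$ are intersections of at most~$n$ edge-group conjugates, hence lie in the uniformly boundedly acyclic collection. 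Therefore the $G$-action on~$E(T)$ is uniformly boundedly $n$-acyclic and, by Corollary~\ref{cor:bac:set:computes:bounded:cohomology} together with Remark~\ref{rem:alt:computes:bc}, the complex~$\Balt(E(T)^{*+1};\R)^G$ canonically computes~$H_b^*(G;\R)$ in degrees~$\leq n$.

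For each~$v\in V$, I would fix~$v_0\in V(T)$ with~$\Stab_G(v_0)=G_v$ and let~$S_v\subset E(T)$ denote the star of~$v_0$. Since edge stabilisers in~$T$ fix both endpoints of the corresponding edge, the $G_v$-stabiliser of every~$e\in S_v$ equals the edge-group conjugate~$\Stab_G(e)$, and stabilisers of $k$-tuples in~$S_v^k$ for~$k\leq n$ again lie in the uniformly boundedly acyclic collection. Hence~$\Balt(S_v^{*+1};\R)^{G_v}$ computes~$H_b^*(G_v;\R)$ in degrees~$\leq n$, and the restriction map~$R_v\colon\Balt(E(T)^{*+1};\R)^G\to\Balt(S_v^{*+1};\R)^{G_v}$ (restricting both equivariance and the tuple domain) realises the canonical restriction~$H_b^*(i_v)$ at the level of cohomology by Lemma~\ref{lemma:prel:induced:at:level:of:group:is:the:same}.

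To prove surjectivity of~$\bigoplus_v H_b^k(i_v)$, I would construct a cochain-level section of~$\bigoplus_v R_v$. Given~$v\in V$ and a cocycle~$\bar c\in\Balt(S_v^{k+1};\R)^{G_v}$, set~$\sigma_v(\bar c)(e_0,\ldots,e_k)\coloneqq\bar c(g^{-1}e_0,\ldots,g^{-1}e_k)$ when the edges~$e_0,\ldots,e_k$ share a common endpoint in the orbit~$G\cdot v_0$ (with~$g\in G$ chosen to move that endpoint to~$v_0$; well-defined by $G_v$-invariance of~$\bar c$) and zero otherwise. This produces a $G$-equivariant alternating cochain, but it is generally not closed: its coboundary is supported on tuples whose edges do not share a common endpoint in~$G\cdot v_0$, and a bounded primitive for this coboundary can be assembled using the uniform bounded acyclicity of intersection stabilisers up to order~$n$, yielding a genuine cocycle~$\tilde\sigma_v(\bar c)$. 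Summing over~$v$ produces a right inverse to~$\bigoplus_v R_v$ on cohomology in the range $k\in\{3,\ldots,n\}$, since~$R_v\circ\tilde\sigma_v$ induces the identity on~$H_b^k(G_v;\R)$ while~$R_w\circ\tilde\sigma_v$ becomes exact for~$w\neq v$ (the naive extension vanishes there because tuples sharing a common endpoint in~$G\cdot w_0$ cannot also generically share an endpoint in a distinct orbit~$G\cdot v_0$ within a tree). The main obstacle will be precisely this cocycle correction: carrying it out uniformly throughout the degree range forces the UBA-of-order-$n$ hypothesis, and the lower bound~$k\geq 3$ reflects the homological degrees available for the correction without interfering with low-degree identifications.
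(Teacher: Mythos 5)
Your overall strategy is the same as the paper's: both proofs compute $H^*_b(G)$ and $H^*_b(G_v)$ via uniformly boundedly acyclic actions on sets built from the Bass--Serre data (Corollary~\ref{cor:bac:set:computes:bounded:cohomology}, Remark~\ref{rem:alt:computes:bc}, Lemma~\ref{lemma:prel:induced:at:level:of:group:is:the:same}), and then try to produce a cochain-level right inverse to the restriction $\varphi^*=\bigoplus_v R_v$. The identification of the relevant stabilisers with intersections of at most $n$ edge-group conjugates, and the explanation of the bound $k\ge 3$ (a cochain map defined only in degrees $\ge 2$ induces a map on cohomology only from degree $3$ on), are both correct and match the paper. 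One cosmetic difference: the paper's sets $S=(G\times V)\sqcup\coprod_{e\in\overline E}G/G_e$ and $S_v=G_v\sqcup\coprod_{t(e)=v}G_v/G_e$ carry an extra free orbit that your star-of-$v_0$ model omits; that free summand is part of the construction you would need to import.

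The genuine gap is the construction of the section itself. The paper does not build it: it cites the explicit cochain map $\psi^*$ of Bucher et al.\ \cite[Theorem~4.1]{BBFIPP}, which is a purely combinatorial formula on the $G$-sets $S$, $S_v$, is a right inverse to $\varphi^*$ \emph{on the nose} in degrees $\ge 2$, and uses no amenability -- which is exactly why it transfers verbatim to the boundedly acyclic setting. Your replacement, ``extend $\bar c$ by zero off the tuples with a common endpoint in $G\cdot v_0$ and then correct the coboundary,'' is not a proof. The naive extension $\sigma_v(\bar c)$ is indeed not closed, and everything then hinges on producing a bounded, $G$-equivariant, alternating $u$ with $\delta u=\delta\sigma_v(\bar c)$ such that, in addition, $R_v(u)$ is exact (so that $R_v(\tilde\sigma_v(\bar c))$ still represents $[\bar c]$) and $R_w(u)$ is exact for $w\neq v$. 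Uniform bounded acyclicity of order $n$ only tells you that the modules $\linf(E(T)^{k+1},\R)$ are boundedly acyclic, i.e.\ that $\Balt(E(T)^{*+1},\R)^G$ computes $H^*_b(G)$; it does not hand you an invariant primitive of a prescribed equivariant coboundary, and it says nothing about how such a primitive restricts to the stars $S_w$. This correction step is precisely the difficulty the explicit formula of \cite{BBFIPP} is designed to avoid, and as written your argument asserts rather than establishes it. (There are also smaller well-definedness issues in $\sigma_v$ -- the common endpoint of a tuple of edges in a tree is unique only once at least two entries are distinct, and the case of loops in $\graf$, where both endpoints of an edge lie in the same $G$-orbit, needs care -- but these are secondary to the missing correction argument.)
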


For the proof, we describe the bounded cohomology of~$G$ and the
vertex groups via suitable uniformly boundedly acyclic actions. 
In the situation of Setup~\ref{setup:graph}, we
consider the set
\[ S \coloneqq (G \times V) \sqcup \coprod_{e \in \overline E} G/G_e
\]
with the $G$-action
\begin{itemize}
\item given on~$G \times V$ by left translation on the first factor;
\item given on each~$G/G_e$ by left translation of cosets.
\end{itemize}
In particular, for~$k \in \{0,\dots,n-1\}$, the diagonal action of~$G$
on~$S^{k+1}$ is uniformly boundedly acyclic, since we assumed uniform
bounded acyclicity of order~$n$ for the collection of edge groups.  By
Remark~\ref{rem:alt:computes:bc}, the bounded cohomology of~$G$ is
canonically isomorphic to the cohomology of the
complex~$\Balt(S^{*+1},\R)^G$ in degrees~$\leq n$.

Similarly, for each vertex~$v \in V$, we consider
the $G_v$-set
\[ S_v \coloneqq G_v \sqcup \coprod_{e \in E, t(e) = v} G_v / G_e \,.
\]
with the left translation action.  In the situation of
Setup~\ref{setup:graph}, the diagonal action on~$S_v^{k+1}$ is
uniformly boundedly acyclic for~$k \in \{0,\dots,n-1\}$ and thus the
bounded cohomology of~$G_v$ is canonically isomorphic to the
cohomology of the complex~$\Balt(S_v^{*+1},\R)^{G_v}$ in degrees~$\leq
n$ (Remark~\ref{rem:alt:computes:bc}).

For each vertex~$v \in V$, there is a canonical inclusion~$\varphi_v
\colon S_v \to S$; on the first summand, this is given
by~$\varphi_v(g) \coloneqq (g,v)$ for all~$g \in G_v$, on the other summands,
we use the canonical maps induced by the canonical inclusions~$i_v
\colon G_v \hookrightarrow G$. By construction, $\varphi_v$ is
$G_v$-equivariant with respect to the inclusion~$i_v$.

With this preparation, we give the proof of Theorem~\ref{thm:graphbac}: 

\begin{proof}[Proof of Theorem~\ref{thm:graphbac}]
  We write
  \[ \varphi^* \coloneqq \bigoplus_{v \in V} \varphi_v^*
  \colon \Balt(S^{*+1},\R)^G
  \to
  \bigoplus_{v \in V}\Balt(S_v^{*+1},\R)^{G_v}
  \]
  for the combination of the~$\varphi_v^*$. 
  Bucher et al.~\cite[Theorem~4.1]{BBFIPP} provide a construction of a cochain
  map~$\psi^* \colon \bigoplus_{v \in V} \Balt(S_v^{*+1},\R)^{G_v}
  \to \Balt(S^{*+1},\R)^G$
  in degrees~$\geq 2$ that is right-inverse to~$\varphi^*$.
  Then also~$H^k(\varphi^*)$ has a right inverse in degrees~$\geq 2$. 

  Let $k \in \{3,\dots, n\}$.  
  By Lemma~\ref{lemma:prel:induced:at:level:of:group:is:the:same}
  (applied to~$i_v$ and $\varphi_v$ for each vertex~$v \in V$)
  and using that~$V$ is finite,
  we obtain the following
  commutative diagram:
  \[\text{%
  \makebox[0pt]{%
  \xymatrix{%
    H^k\bigl( \Balt(S^{*+1},\R)^G\bigr)
    \ar[rr]
    \ar[d]_{H^k(\varphi^*)}
    &&
    H_b^k(G;\R)
    \ar[d]_{\bigoplus_{v \in V}{H_b^k(i_v)}}
    \\ 
    H^k\bigl(\bigoplus_{v \in V} \Balt(S_v^{*+1},\R)^{G_v}\bigr)
    \ar@{<->}[r]^-{\cong}
    &
    \bigoplus_{v \in V} H^k\bigl(\Balt(S_v^{*+1},\R)^{G_v}\bigr)
    \ar[r]
    &
    \bigoplus_{v \in V} H_b^k(G_v;\R) \,.
    }}}
  \]
  Here the horizontal maps are the canonical maps.
  As $k \in \{3,\dots,n\}$, these horizontal maps
  are isomorphism and the left vertical arrow
  admits a right inverse (given by~$\psi^*$).
  Therefore, also the right vertical arrow has a right inverse.
  In particular, the right vertical arrow~$\bigoplus_{v \in V} H_b^k(i_v)$
  is surjective.
\end{proof}

In particular, Theorem~\ref{thm:graphbac} applies to the
glueing situation of Theorem~\ref{thm:bacglue}:

\begin{ex}\label{exa:graphglueingmfd}
  In the situation of Theorem~\ref{thm:graphbac}, the fundamental
  group~$\pi_1(M)$ is isomorphic to the fundamental group of a finite
  graph of groups that satisfies the conditions of
  Setup~\ref{setup:graph}. More specifically, the vertex groups
  are isomorphic to the fundamental groups of the~$(M_i)_{i\in I}$ and
  the edge groups are isomorphic to the fundamental groups of
  the boundary components in~$\calN$ along which we glue.
\end{ex}

This concludes the proof of Theorem~\ref{thm:bacglue}.

%%%%%%%%%%%%%%%%%%%%%%%%%%%%%%%%%%%%%%%%%%%%%%%%%%%%%%%%%%%%%%%%%%%
\appendix
%%%%%%%%%%%%%%%%%%%%%%%%%%%%%%%%%%%%%%%%%%%%%%%%%%%%%%%%%%%%%%%%
\section{The uniform boundary condition}\label{appx:ubc}

We recall the uniform boundary condition and its basic properties
and consequences in the context of bounded cohomology.
 Moreover, we introduce the uniform uniform boundary condition
and use it to compute the bounded cohomology of bounded products.

%%%%%%%%%%%%%%%%%%%%
\subsection{Normed chain complexes}

We begin with basic terminology for normed chain complexes. A
\emph{normed chain complex} over a normed ring~$R$ is a chain complex
in the category of normed $R$-modules with bounded linear maps;
i.e., the boundary operators in normed chain complexes are degree-wise
bounded linear operators. A \emph{Banach chain complex} is a normed
chain complex over~$\R$ consisting of Banach spaces. Similarly, one has
\emph{normed} [\emph{resp.~Banach}] \emph{cochain complexes}.

Let $C_*$ be a normed chain complex over a normed ring~$R$
with boundary operator~$\partial_*\colon C_*\to C_{*-1}$.
\begin{itemize}
\item We write~$B(C_*,R)$ for the normed cochain complex whose cochain
  modules are the bounded duals of the chain modules of~$C_*$ and
  whose coboundary operators are the duals of the boundary
  operators~$\partial_*$.
\item We write~$\overline C_*$ for the degree-wise completion of~$C_*$
  with the degree-wise continuous extension~$\overline\partial_*$ of
  the boundary operator~$\partial_*$.
\end{itemize}
Over~$\R$, both $B(C_*,\R)$ and $\overline C_*$ are Banach
[co]chain complexes.

If $C_*$ is a normed chain complex, then we obtain an induced
seminorm on~$H_*(C_*)$ via
\[ \| \alpha \|
\coloneqq \inf \bigl\{ |c|
\bigm| \text{$c \in C_*$ is a cycle representing~$\alpha$}
\bigr\}
\]
for all~$\alpha \in H_*(C_*)$. Similarly, this also works for normed
cochain complexes.

%%%%%%%%%%%%%%%%%%%%
\subsection{The uniform boundary condition}\label{subsec:UBC}

The uniform boundary condition asks for uniform control on fillings of
null-homologous [co]cycles in normed [co]chain complexes. In some
contexts, similar properties are encoded in the language of
isoperimetric inequalities. In the following, we will stick to the
terminology of Matsumoto and Morita~\cite{matsumotomorita}.

\begin{defn}[Uniform boundary condition]
  Let $C_*$ be a normed chain complex over a normed ring and let $k
  \in \N$. Then $C_*$ satisfies the \emph{uniform boundary condition
    in degree~$k$} if there exists a constant~$K \in \R_{>0}$ with
  \[ \fa{b \in \im \partial_{k+1} \subset C_k} \exi{c \in C_{k+1}}
     \partial_{k+1} (c) = b \qand |c| \leq K \cdot |b| \,.
  \]
  We abbreviate the uniform boundary condition in degree~$k$ by~$\UBC_k$.
\end{defn}

\begin{rem}\label{rem:ubcco}
  Through re-indexing, we can translate the notion of uniform boundary
  condition also to normed cochain complexes. In this case, we use the
  notation~$\UBC^k$ for the uniform boundary condition in degree~$k$.
  Moreover, all of the results below apply both to chain and cochain
  complexes (with appropriate re-indexing).
\end{rem}

We recall basic inheritance properties of~$\UBC$:

\begin{prop}[Homotopy inheritance of~$\UBC$]\label{prop:ubchinherit}
  Let $k \in \N$ and let $C_*$, $D_*$ be normed chain complexes
  over a normed ring~$R$ that are chain homotopic in the category
  of normed $R$-chain complexes. Then $C_*$ satisfies~$\UBC_k$ if and only
  if~$D_*$ satisfies~$\UBC_k$.
  
  More precisely, let $f_* \colon C_* \to D_*$ and $g_* \colon D_* \to C_*$
  be chain maps that are bounded in each degree and let $h_* \colon D_* \to D_{*+1}$
  be a corresponding degree-wise bounded chain homotopy between~$f_* \circ g_*$
  and~$\id_{D_*}$. If $K \in \R_{>0}$ is a $\UBC_k$-constant for~$C_*$, then
  \[ \|f_{k+1}\| \cdot \|g_k\| \cdot K + \|h_k\|
  \]
    is a $\UBC_k$-constant for~$D_*$.
\end{prop}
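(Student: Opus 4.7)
The plan is to give a direct proof of the ``more precisely'' clause, since the qualitative equivalence follows by symmetry (interchanging the roles of $C_*$ and $D_*$ using a chain homotopy between $g_* \circ f_*$ and $\id_{C_*}$, which exists because the two complexes are chain homotopy equivalent in the category of normed $R$-chain complexes). So suppose $K \in \R_{>0}$ is a $\UBC_k$-constant for~$C_*$, fix a boundary~$b \in \im \partial_{k+1}^D \subset D_k$, and look for an explicit filling in~$D_{k+1}$.

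First, I would transport the problem to~$C_*$. Since $g_*$ is a chain map and $b$ is a boundary, $g_k(b) \in C_k$ is a boundary as well (if $b = \partial_{k+1}^D d$ then $g_k(b) = \partial_{k+1}^C g_{k+1}(d)$; this only asserts existence of some preimage, not a controlled one). Applying $\UBC_k$ for~$C_*$, I obtain an element~$c \in C_{k+1}$ with
\[
  \partial_{k+1}^C(c) = g_k(b)
  \qand
  |c| \leq K \cdot |g_k(b)| \leq K \cdot \|g_k\| \cdot |b| \,.
\]

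Next, I would push the filling back to~$D_*$ and correct for the fact that $f_* \circ g_*$ is only chain homotopic to, not equal to, the identity. The homotopy relation $f_k \circ g_k - \id_{D_k} = \partial_{k+1}^D \circ h_k + h_{k-1} \circ \partial_k^D$, together with $\partial_k^D(b) = 0$ (since $b$ is itself a boundary), yields
\[
  b = f_k(g_k(b)) - \partial_{k+1}^D(h_k(b))
    = \partial_{k+1}^D\bigl(f_{k+1}(c) - h_k(b)\bigr) \,,
\]
using that $f_*$ is a chain map in the first equality. Hence $c' \coloneqq f_{k+1}(c) - h_k(b)$ is the desired filling of~$b$ in~$D_{k+1}$, and the triangle inequality together with the previous norm estimate gives
\[
  |c'| \leq \|f_{k+1}\| \cdot |c| + \|h_k\| \cdot |b|
       \leq \bigl(\|f_{k+1}\| \cdot \|g_k\| \cdot K + \|h_k\|\bigr) \cdot |b| \,,
\]
which is the claimed $\UBC_k$-constant for~$D_*$.

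This argument is essentially a routine diagram chase; there is no real obstacle. The only thing to keep track of is that the chain homotopy formula only produces a \emph{genuine} filling of~$b$ (rather than a filling modulo the image of~$h_{k-1} \circ \partial_k^D$) because $b$ is already a cycle, which is why we needed $b$ to be a boundary, not merely an element annihilated by~$f_k \circ g_k - \id$. With both directions established, $C_*$ satisfies~$\UBC_k$ if and only if~$D_*$ does, completing the proof.
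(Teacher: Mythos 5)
Your proof is correct and coincides with the paper's own argument: transport the boundary to $C_*$ via $g_k$, fill it there with the given $\UBC_k$-constant, and push the filling back via $f_{k+1}$, correcting with $-h_k(b)$; the norm estimate is identical. No issues.
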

\begin{proof}
  Let $x \in \im \partial_{k+1}^D$. Then $g_k(x) \in \im \partial^C_{k+1}$.
  As $K$ is a $\UBC_k$-constant for~$C_*$, there exists~$\widetilde y \in C_{k+1}$
  with
  $\partial^C_{k+1}(\widetilde y) = g_k(x)$
  and $|\widetilde y| \leq K \cdot \bigl|g_k(x)\bigr|$.
  Then
  \[ y \coloneqq f_{k+1} (\widetilde y) - h_k(x)
  \]
  satisfies~$\partial^D_{k+1} (y) = x$ and
  $|y| \leq \bigl(\|f_{k+1}\| \cdot \|g_k\| \cdot K + \|h_k\|\bigr) \cdot |x|
  $, 
  as desired.
\end{proof}

\begin{prop}[Dense subcomplexes and~$\UBC$]\label{prop:ubcdense}
  Let $R$ be a normed ring, let~$D_*$ be a normed chain complex over~$R$,
  and let $C_* \subset D_*$ be a dense subcomplex.
  Let $k \in \N$. 
  Then the following are equivalent:
  \begin{enumerate}[label=\enum]
  \item\label{i:C}
    $C_*$ satisfies~$\UBC_k$;
  \item\label{i:Dbar}
    $\overline D_*$ satisfies~$\UBC_k$ and $\ker \partial_{k+1}^C$
    is dense in~$\ker \partial_{k+1}^{\overline D}$;
  \item\label{i:D}
    $D_*$ satisfies~$\UBC_k$ and $\ker \partial_{k+1}^C$
    is dense in~$\ker \partial_{k+1}^D$.
  \end{enumerate}
\end{prop}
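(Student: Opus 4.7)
My plan is to first prove the equivalence \ref{i:C} $\Leftrightarrow$ \ref{i:D} directly, and then observe that \ref{i:C} $\Leftrightarrow$ \ref{i:Dbar} follows from the very same argument applied with $\overline{D}_*$ in place of $D_*$. This substitution is legitimate because $C_*$ is dense in $D_*$ and $D_*$ is dense in $\overline{D}_*$, so $C_*$ is also dense in $\overline{D}_*$. The key conceptual observation throughout is that the common norm on the inclusion $C_k \subset D_k$ makes convergence in $D_k$ coincide with convergence in $C_k$ on elements of~$C_k$.

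For the implication \ref{i:C} $\Rightarrow$ \ref{i:D}, I will assume $C_*$ satisfies $\UBC_k$ with constant~$K$ and argue the two conclusions separately. To establish density of $\ker\partial^C_{k+1}$ in $\ker\partial^D_{k+1}$, I would take $z\in\ker\partial^D_{k+1}$, approximate it by $c_n\in C_{k+1}$, and note that $\partial c_n\to 0$. By $\UBC_k$ for $C_*$, I can then choose fillings $f_n\in C_{k+1}$ with $\partial f_n=\partial c_n$ and $|f_n|\le K|\partial c_n|\to 0$; the correction $c_n-f_n$ lies in $\ker\partial^C_{k+1}$ and still converges to~$z$. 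To establish $\UBC_k$ for $D_*$, I would take $b=\partial d\in B^D_k$, approximate $d$ by some $c\in C_{k+1}$ with $|c-d|<\epsilon$, apply $\UBC_k$ in $C_*$ to $\partial c\in B^C_k$ to obtain $c'\in C_{k+1}$ with $\partial c'=\partial c$ and $|c'|\le K|\partial c|$, and set $x\coloneqq c'+(d-c)\in D_{k+1}$. Then $\partial x=b$ and one computes $|x|\le K|\partial c|+\epsilon\le K\bigl(|b|+\|\partial_{k+1}\|\,\epsilon\bigr)+\epsilon$; choosing $\epsilon$ proportional to $|b|$ yields a uniform constant. Crucially, this estimate never involves $|d|$, which is not a priori controlled by~$|b|$.

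For the converse \ref{i:D} $\Rightarrow$ \ref{i:C}, I will assume that $D_*$ satisfies $\UBC_k$ with constant~$K$ and that $\ker\partial^C_{k+1}$ is dense in $\ker\partial^D_{k+1}$. Given $b\in B^C_k$, I would pick any $c_0\in C_{k+1}$ with $\partial c_0=b$, and use $\UBC_k$ for $D_*$ to find $d\in D_{k+1}$ with $\partial d=b$ and $|d|\le K|b|$. Since $c_0-d\in\ker\partial^D_{k+1}$, the density hypothesis provides $z\in\ker\partial^C_{k+1}$ with $|z-(c_0-d)|\le|b|$. Then $c_0-z\in C_{k+1}$, $\partial(c_0-z)=b$, and $|c_0-z|\le|d|+|b|\le(K+1)|b|$. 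The main technical subtlety to watch is that none of these arguments preserve the $\UBC_k$-constant exactly: one inevitably picks up either an additive~$\epsilon$ or multiplicative factors involving $\|\partial_{k+1}\|$. This is harmless for the qualitative equivalence, but it explains why the argument never attempts to identify the sharp constant of one complex with that of another.
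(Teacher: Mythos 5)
Your proof is correct. The filling arguments are the right ones and the constants work out: in \ref{i:C}~$\Rightarrow$~\ref{i:D} the key point you flag -- that the estimate for $|x|$ must not involve $|d|$ -- is exactly the issue, and your correction term $d-c$ of norm~$<\epsilon$ with $\epsilon$ proportional to~$|b|$ handles it (the degenerate case $b=0$ is settled by taking $x=0$); in \ref{i:D}~$\Rightarrow$~\ref{i:C} the decomposition $c_0-z = d - \bigl(z-(c_0-d)\bigr)$ gives the constant~$K+1$. Your route differs from the paper's mainly in organisation and self-containedness: the paper does not write out any filling argument but instead observes that, since $C_*$ is dense in~$D_*$, the completion $\overline D_*$ is simultaneously a completion of~$C_*$, and then invokes the Matsumoto--Morita argument (their Theorem~2.8, relating a normed complex to its completion) twice -- once for the pair $(C_*,\overline D_*)$ to get \ref{i:C}~$\Leftrightarrow$~\ref{i:Dbar}, and once for $(D_*,\overline D_*)$ to extract~\ref{i:D}, noting that the implication \ref{i:D}~$\Rightarrow$~\ref{i:C} does not use completeness of the ambient complex. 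You instead prove \ref{i:C}~$\Leftrightarrow$~\ref{i:D} directly for an arbitrary dense subcomplex and obtain \ref{i:C}~$\Leftrightarrow$~\ref{i:Dbar} by substituting $\overline D_*$ for~$D_*$ (legitimate, since $C_*$ is dense in~$\overline D_*$ and the extended boundary operator is still bounded). What your version buys is a citation-free proof with explicit control of the constants, which is in the spirit of the quantitative $\UUBC$ statements elsewhere in the appendix; what the paper's version buys is brevity and a clear delineation of which parts of the classical argument need completeness.
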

\begin{proof}
  Because $C_*$ is dense in~$D_*$, the completion~$\overline D_*$
  of~$D_*$ is also a completion of~$C_*$. 
  The argument by Matsumoto and Morita~\cite[Theorem~2.8]{matsumotomorita}
  applies to all normed chain complexes and their completions; this shows
  the equivalence of \ref{i:C} and~\ref{i:Dbar}.

  We may apply this to~$D_*$ and its completion~$\overline D_*$, since
  with $\ker \partial_{k+1}^C$ also $\ker \partial_{k+1}^D$ is dense
  in~$\ker \partial_{k+1}^{\overline D}$.
  Hence~\ref{i:Dbar} implies that $D_*$ satisfies~$\UBC_k$; 
  whence~\ref{i:Dbar}
  implies~\ref{i:D}, because $\ker \partial_{k+1}^C$ being dense
  in~$\ker \partial_{k+1}^{\overline D}$ also implies density of~$\ker
  \partial_{k+1}^C$ in~$\ker \partial_{k+1}^D$.

  Also, the argument by Matsumoto and Morita shows that \ref{i:D}
  implies~\ref{i:C}, as this implication does not rely on completeness
  of the ambient complex.
\end{proof}

The following characterisations of~$\UBC$ apply to normed [resp.~Banach]
chain complexes over~$\IR$.

\begin{thm}[{\cite[Theorem~2.8]{matsumotomorita}}]\label{thm:ubccomplete}
  Let $C_*$ be a normed chain complex over~$\R$ and let $k
  \in \N$. Then the following are equivalent:
  \begin{enumerate}[label=\enum]
  \item $C_*$ satisfies~$\UBC_k$;
  \item $\overline C_*$ satisfies~$\UBC_k$ and $\ker \partial_{k+1}$
    is dense in~$\ker \overline\partial_{k+1}$;
  \item The comparison map~$H^{k+1}(B(C_*,\R)) \to H^{k+1}(\Hom_\R(C_*,\R))$
    is injective.
  \end{enumerate}
\end{thm}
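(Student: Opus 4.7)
The plan is to establish (i) $\Leftrightarrow$ (ii) as a direct consequence of Proposition~\ref{prop:ubcdense}, and then to prove the more delicate equivalence (i) $\Leftrightarrow$ (iii) via Hahn--Banach arguments that transfer between the filling problem in $C_*$ and a cocycle-extension problem for $B(C_*, \R)$.

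For the first equivalence, I would apply Proposition~\ref{prop:ubcdense} to the dense subcomplex inclusion $C_* \hookrightarrow D_* \coloneqq \overline{C_*}$. Since $\overline{D_*} = \overline{C_*}$, the second condition of that proposition---namely that $\overline{D_*}$ satisfies $\UBC_k$ and $\ker \partial^C_{k+1}$ is dense in $\ker \partial^{\overline{D}}_{k+1}$---becomes verbatim condition (ii) of the theorem, while the first condition is (i).

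For (i) $\Rightarrow$ (iii), let $f \in B(C_{k+1}, \R)$ be a bounded cocycle whose class lies in the kernel of the comparison map, so that $f = \delta g$ for some (a priori unbounded) $g \in \Hom_{\R}(C_k, \R)$. For every $b \in \im \partial_{k+1}$, $\UBC_k$ supplies a filling $c \in C_{k+1}$ with $\partial c = b$ and $|c| \leq K \cdot |b|$; substituting into $g(b) = g(\partial c) = f(c)$ yields $|g(b)| \leq K \cdot \|f\| \cdot |b|$. Thus $g$ is bounded on the subspace $\im \partial_{k+1} \subset C_k$ with its inherited norm, and Hahn--Banach extends it to some $g' \in B(C_k, \R)$ agreeing with $g$ on $\im \partial_{k+1}$; then $\delta g' = f$, witnessing $[f] = 0$ in $H^{k+1}(B(C_*, \R))$.

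The converse, (iii) $\Rightarrow$ (i), I would establish contrapositively. If $\UBC_k$ fails, then on $\im \partial_{k+1}$ the filling seminorm $\|b\|_Q \coloneqq \inf\{|c| : \partial c = b\}$ (inherited from $C_{k+1}/\ker \partial_{k+1}$ via the induced map $\tilde\partial$) and the subspace seminorm $|\cdot|_S$ from $C_k$ are inequivalent. Choosing a sequence $b_n \in \im \partial_{k+1}$ with $\|b_n\|_Q = 1$ and $|b_n|_S \to 0$, passing to a suitable subsequence, and applying Hahn--Banach dominated by $\|\cdot\|_Q$ produces a $\|\cdot\|_Q$-bounded linear functional $\ell$ on $\im \partial_{k+1}$ that is not $|\cdot|_S$-bounded. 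Transporting $\ell$ back along $\tilde\partial$ yields a bounded cocycle $f \in B(C_{k+1}, \R)$ that vanishes on $\ker \partial_{k+1}$; hence $f$ is a coboundary in $\Hom_{\R}(C_*, \R)$ via any algebraic extension, but any bounded primitive would restrict to a $|\cdot|_S$-bounded extension of $\ell$, contradicting its construction. The main obstacle is precisely this last construction, where one must convert the mere inequivalence of seminorms into a concrete $\|\cdot\|_Q$-bounded functional whose $|\cdot|_S$-behaviour is uncontrolled; this requires careful control on the chosen sequence---potentially via passage to the completion of $(\im \partial_{k+1}, \|\cdot\|_Q)$---before invoking Hahn--Banach.
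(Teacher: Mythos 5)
Your overall architecture is sound and mirrors what the paper actually does: the paper proves (i)$\Leftrightarrow$(ii) exactly as you do, by specialising Proposition~\ref{prop:ubcdense} to $D_*=\overline{C}_*$, and for (i)$\Leftrightarrow$(iii) it simply defers to Matsumoto--Morita, whose argument you are reconstructing. Your direction (i)$\Rightarrow$(iii) is complete and correct: the identity $g(b)=f(c)$ for any filling $c$ of $b$, the resulting bound $|g(b)|\le K\cdot\|f\|\cdot|b|$ on $\im\partial_{k+1}$, and the Hahn--Banach extension to a bounded primitive $g'$ with $\delta g'=f$ is precisely the standard proof.

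The genuine gap is the one you flag yourself, in (iii)$\Rightarrow$(i): producing a \emph{single} linear functional $\ell$ on $W\coloneqq\im\partial_{k+1}$ that is bounded for the filling norm $\|\cdot\|_Q$ but unbounded for the subspace norm $|\cdot|_S$. ``Passing to a subsequence and applying Hahn--Banach dominated by $\|\cdot\|_Q$'' does not do this as stated: Hahn--Banach requires a functional already defined and $\|\cdot\|_Q$-dominated on the linear span of the chosen $b_n$, and controlling a prescribed assignment $b_n\mapsto\lambda_n$ on that span is exactly the difficulty (the $b_n$ need not be anything like a basic sequence). A clean way to close the gap is by contradiction via Banach--Steinhaus rather than by an explicit construction. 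Note first that $|b|_S\le\|\partial_{k+1}\|\cdot\|b\|_Q$, so $(W,\|\cdot\|_Q)^*\supset(W,|\cdot|_S)^*$ and $(W,\|\cdot\|_Q)^*$ is a Banach space. If \emph{every} $\|\cdot\|_Q$-bounded functional were $|\cdot|_S$-bounded, then for your sequence with $\|b_n\|_Q=1$ and $|b_n|_S\to0$ the evaluations $T_n\colon\ell\mapsto\ell(b_n)/|b_n|_S$ would be pointwise bounded on $(W,\|\cdot\|_Q)^*$ (by $\|\ell\|_S$), hence uniformly bounded; since the canonical embedding of $(W,\|\cdot\|_Q)$ into its bidual is isometric, $\|T_n\|=\|b_n\|_Q/|b_n|_S=1/|b_n|_S$, contradicting $|b_n|_S\to0$. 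With $\ell$ in hand, the rest of your argument (setting $f\coloneqq\ell\circ\partial_{k+1}$, checking it is a bounded cocycle, an algebraic coboundary, but not a bounded coboundary) is correct, so this single repair completes the proof.
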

\begin{proof}
  The equivalence of the first two items is contained in
  Proposition~\ref{prop:ubcdense}.  The argument by Matsumoto and
  Morita~\cite[Theorem~2.8]{matsumotomorita} for the remaining
  implications applies to all normed chain complexes over~$\R$.
\end{proof}

\begin{thm}[{\cite[Theorem~2.3]{matsumotomorita}}]\label{thm:ubcbanach}
  Let $C_*$ be a Banach chain complex over~$\R$ and let $k \in \N$. Then the
  following are equivalent:
  \begin{enumerate}[label=\enum]
  \item $C_*$ satisfies~$\UBC_k$;
  \item $\im \partial_{k+1}$ is closed in~$C_k$;
  \item $H_k(C_*)$ is a Banach space with respect to the induced seminorm;
  \item $H^{k+1}(B(C_*,\R))$ is a Banach space with respect to the
    induced seminorm.
  \end{enumerate}
\end{thm}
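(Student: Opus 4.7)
The plan is to organise the four conditions around~(ii) as a hub and prove the three equivalences (i)$\Leftrightarrow$(ii), (ii)$\Leftrightarrow$(iii), and (ii)$\Leftrightarrow$(iv) separately, each time using completeness of~$C_*$ together with a standard functional-analytic input (Open Mapping Theorem or Banach's Closed Range Theorem). The tautological one is (ii)$\Leftrightarrow$(iii): since $\partial_k$ is bounded, $\ker\partial_k$ is a closed, hence Banach, subspace of~$C_k$, and $H_k(C_*)=\ker\partial_k/\im\partial_{k+1}$ is Banach in the quotient seminorm if and only if $\im\partial_{k+1}$ is closed in~$\ker\partial_k$, equivalently in~$C_k$.

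For (i)$\Rightarrow$(ii), I would use the classical telescoping trick: given a sequence $(b_n)$ in $\im\partial_{k+1}$ converging to some $b\in C_k$, pass to a subsequence with $|b_{n+1}-b_n|\le 2^{-n}$. The $\UBC_k$-constant~$K$ provides primitives $c_n\in C_{k+1}$ with $\partial_{k+1}(c_n)=b_{n+1}-b_n$ and $|c_n|\le K\cdot 2^{-n}$; because $C_{k+1}$ is a Banach space, the series $\sum_n c_n$ converges absolutely, and its sum fills $b-b_1$. Since $b_1\in\im\partial_{k+1}$ by assumption, also $b\in\im\partial_{k+1}$. For the converse (ii)$\Rightarrow$(i), the induced map $\widehat\partial_{k+1}\colon C_{k+1}/\ker\partial_{k+1}\to \im\partial_{k+1}$ is a continuous linear bijection between Banach spaces (this is where closedness of $\im\partial_{k+1}$ enters), so the Open Mapping Theorem yields a bounded inverse, whose operator norm serves as a $\UBC_k$-constant.

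The equivalence (ii)$\Leftrightarrow$(iv) is a direct application of Banach's Closed Range Theorem to the bounded operator $\partial_{k+1}\colon C_{k+1}\to C_k$: its image is closed if and only if the image of its Banach-space adjoint $d^k=\partial_{k+1}^*\colon B(C_k,\R)\to B(C_{k+1},\R)$ is closed. Since $\ker d^{k+1}$ is always closed inside the Banach space~$B(C_{k+1},\R)$, the quotient $H^{k+1}(B(C_*,\R))=\ker d^{k+1}/\im d^k$ is a Banach space in its induced seminorm precisely when $\im d^k$ is closed. The only slightly delicate points are the two appeals to functional analysis, but both are routine given that $C_*$ consists of Banach spaces; no hypothesis on closedness of kernels of the~$\partial_j$ in other degrees is needed, so no further fibre-wise argument is required.
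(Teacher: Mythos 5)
Your proof is correct and follows essentially the standard Matsumoto--Morita argument, which is all the paper itself offers: it gives no independent proof of this theorem, only the remark that the argument of \cite[Theorem~2.3]{matsumotomorita} applies verbatim to arbitrary Banach chain complexes, and your telescoping/Open Mapping/Closed Range scheme is exactly that argument. The only cosmetic point is in (ii)$\Rightarrow$(i): the Open Mapping Theorem bounds the \emph{quotient} norm of a primitive, so to get an actual chain $c$ with $\partial_{k+1}(c)=b$ and $|c|\le K'\cdot|b|$ one should enlarge the operator norm of the inverse slightly (e.g.\ take $K'=2\cdot\|\widehat{\partial}_{k+1}^{-1}\|$), which is harmless since $\UBC_k$ only asks for the existence of some constant.
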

\begin{proof}
  The argument by Matsumoto and Morita~\cite[Theorem~2.3]{matsumotomorita}
  applies to all Banach chain complexes.
\end{proof}

\begin{ex}\label{exa:ubcgroup}
  Let $X$ be a space or a group with~$H_b^*(X;\R) \cong H_b^*(1;\R)$.
  In particular, this bounded cohomology is Banach in all degrees.
  Then the cochain complex version of Theorem~\ref{thm:ubcbanach}
  (Remark~\ref{rem:ubcco}) shows that $C^*_b(X;\R)$ satisfies~$\UBC^k$
  for all~$k \in \N$.
  Moreover, Theorem~\ref{thm:ubccomplete} shows that $C_*(X;\R)$
  satisfies~$\UBC_k$ for all~$k \in \N$.

  This applies to all path-connected spaces
  with amenable fundamental group, to all amenable groups, 
  and to the known boundedly acyclic
  groups~\cite{matsumotomorita,loeh_bcd,fffclmm, fffclmm2, monodnariman, monodthompson}.  In particular,
  there exist finitely presented non-amenable groups~$G$ such that
  $C^*_b(G;\R)$ satisfies~$\UBC^k$ for all~$k \in
  \N$~\cite[Corollary~5.2]{fffclmm}\cite{monodthompson}.
\end{ex}

\begin{rem}
  For the free group~$F_2$ of rank~$2$ it is well known
  that $H^2_b(F_2;\R)$ and $H^3_b(F_2;\R)$ are infinite-dimensional.
  But it is unknown whether the higher bounded cohomology of~$F_2$
  is trivial or not. We outline an approach through the uniform boundary
  condition: 
  Let $k \in \N_{\geq 4}$. 
  Then the following are equivalent:
  \begin{enumerate}[label=\enum]
  \item $H^k_b(F_2;\R) \cong 0$;
  \item $C_*(F_2;\R)$ satisfies~$\UBC_{k-1}$;
  \item $C_*(F_2;\Q)$ satisfies~$\UBC_{k-1}$;
  \end{enumerate}
  Indeed, the first two items are equivalent by
  Theorem~\ref{thm:ubccomplete} and the fact that $H^k(F_2;\R) \cong
  0$.  The equivalence of the last two items follows from
  Proposition~\ref{prop:ubcdense}, the fact that $C_*(F_2;\Q)
  \hookrightarrow C_*(F_2;\R)$ induces an isometric embedding on the
  level of homology~\cite[Proposition~1.7]{loeh_phd}, and a small computation
  using the universal coefficient theorem.
  
  Moreover, the last condition can be reformulated as follows:
  \[ \exi{K \in \R_{>0}}
     \fa{c \in C_k(F_2;\Z)}
     \exi{c' \in C_k(F_2;\Q)}
     \partial_k (c') = \partial_k (c)
     \qand
     |c'|_1 \leq K \cdot |\partial_k (c)|_1 \,.
  \]
  In principle, this allows for experimental testing
  whether~$H_b^k(F_2;\R)$ is trivial or not~\cite{simonlang_bsc}. Of
  course, the main challenge is to efficiently generate large amounts
  of ``interesting'' chains in~$C_k(F_2;\Z)$.
\end{rem}

The uniform boundary condition is useful, e.g., in glueing estimates
for simplicial volume (Section~\ref{subsec:ubcglue}), in vanishing
results for bounded cohomology of certain
groups~\cite{matsumotomorita,loeh_bcd,fffclmm, fffclmm2}, and in vanishing
results for $\ell^1$-homology~\cite{matsumotomorita,fauserloeh_varubc}. 

%%%%%%%%%%%%%%%%%%%%
\subsection{The uniform uniform boundary condition}

We introduce a uniform version of the uniform boundary condition for collections of
normed cochain complexes.

\begin{defn}[Uniform uniform boundary condition]\label{defn:UUBC}
  Let $R$ be a normed ring and let $k \in \N$. A
  collection~$(C_i^*)_{i \in I}$ of normed cochain complexes over~$R$ satisfies
  the \emph{uniform uniform boundary condition in degree~$k$}
  ($\UUBC^k$ for short) if there exists~$K \in \R_{>0}$ that is a
  $\UBC^k$-constant for all~$C_i^*$ with~$i \in I$.
\end{defn}

\begin{ex}[{\cite[Example~4.11]{fffclmm2}}]\label{ex:amenable UUBC}
  We consider a collection~$(H_i)_{i\in I}$ of amenable groups.
  Then $(C_b^*(H_i;\R))_{i \in I}$ satisfies~$\UUBC^k$
  for all~$k \in \N$.
  Indeed, if $H$ is amenable, then $1$ is a $\UBC^k$-constant
  for~$C_b^*(H;\R)$: There exists a contracting cochain
  homotopy~$s$ for~$C_b^*(H;\R)$
  with~$\|s\|\leq1$~\cite[Theorem~3.6]{Frigerio};
  thus, for every cocycle~$b$, the cochain~$c \coloneqq s(b)$
  satisfies
  \[ b = \delta \circ s(b) + s \circ \delta(b)
  = \delta(c)
  \qand
  |c|_\infty \leq \|s\| \cdot |b|_\infty \leq |b|_\infty \,.
  \]
\end{ex}

\begin{ex}\label{exa:uubcgroup}
  Every finite collection of cochain complexes whose members all
  satisfy~$\UBC^k$ satisfies~$\UUBC^k$.

  In particular, from Example~\ref{exa:ubcgroup}, we obtain: If
  $(H_i)_{i \in I}$ is a \emph{finite} collection of boundedly
  acyclic groups, then $(C_b^*(H_i;\R))_{i \in I}$
  satisfies~$\UUBC^k$ for all~$k \in \N$.
\end{ex}

It is unknown whether all collections of boundedly acyclic groups
satisfy~$\UUBC$ in all degrees (as the open mapping theorem used in
the proof of Theorem~\ref{thm:ubcbanach} does not give a priori
estimates on the norms of the partial inverses).  Every collection of
boundedly acyclic groups satisfies $\UUBC$ in
degree~$2$~\cite[Proposition~4.15]{fffclmm2}.

%%%%%%%%%%%%%%%%%%%%
\subsection{Bounded products}\label{subsec:bounded products}

Finite degree-wise products are compatible with taking
cohomology of bounded cochain complexes. For infinite degree-wise
products, in general, one needs to impose boundedness conditions
in a uniform way. To this end, we introduce bounded products
and prove a compatibility statement for cohomology of certain
degree-wise bounded products.

\begin{defn}[Bounded product]
  Let $R$ be a normed ring.
  Let $(V_i)_{i \in I}$ be a collection of normed modules over~$R$.
  The \emph{bounded product} of~$(V_i)_{i \in I}$ is the
  normed $R$-module
  \[ \bprod{i \in I} V_i
  \coloneqq \Bigl\{ x \in \prod_{i \in I} V_i
  \Bigm| \sup_{i \in I} |x_i| < \infty
  \Bigr\} \subset \prod_{i \in I} V_i
  \]
  with respect to the supremum norm~$|\cdot|_\infty$.
\end{defn}

\begin{ex}\label{exa:bprodcoprod}
  Let $(S_i)_{i \in I}$ be a collection of sets and let $R$ be a
  normed ring. Then the canonical inclusions~$(S_j \hookrightarrow
  \coprod_{i \in I} S_i)_{j \in I}$ induce a natural isometry
  \[ \linf \Bigl( \coprod_{i \in I} S_i, R\Bigr)
    \to \bprod{i \in I} \linf(S_i, R)
  \]
  of normed~$R$-modules.
\end{ex}

\begin{rem}[Bounded product of normed cochain complexes]\label{rem:bprodch}
  Let $R$ be a normed ring. A collection~$(C_i^*)_{i \in I}$ of
  normed cochain complexes over~$R$ is \emph{uniform}
  if for each~$k \in \N$, the supremum~$\sup_{i \in I} \|\delta_i^k\|$
  is finite. For example, all collections of normed cochain complexes
  built using simplicial coboundary operators are uniform
  (such as bounded cochain complexes of groups or spaces).

  If $(C_i^*)_{i \in I}$ is a uniform collection of normed cochain
  complexes over~$R$, then the degree-wise bounded product~$(\bprod{i
    \in I} C_i^k)_{k \in \N}$ is a normed cochain complex over~$R$
  with respect to the supremum norm and the degree-wise product
  coboundary operator
  \begin{align*}
    \bprod{i \in I} C_i^* & \to
    \bprod{i \in I} C_i^{*+1}
    \\
    (x_i)_{i \in I} & \mapsto
    (\delta_i^*(x_i))_{i \in I} \,.
  \end{align*}
\end{rem}

\begin{thm}[Cohomology of bounded products]\label{thm:bprod}
  Let $k \in \N$.  Let $(C_i^*)_{i \in I}$ be a uniform collection of
  normed cochain complexes over a normed ring~$R$ that
  satisfies~$\UUBC^k$. Then the map
  \[ \Phi \colon H^k\Bigl( \bprod{i \in I} C_i^* \Bigr)
  \to \bprod{i \in I} H^k(C_i^*)
  \]
  induced by the canonical projections 
  is a continuous isomorphism of $R$-modules with continuous
  inverse. Here, we equip~$H^k(C_i^*)$ with the seminorm induced by
  the given norm on~$C_i^*$.
\end{thm}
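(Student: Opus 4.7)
The strategy is to verify the four required properties of $\Phi$ separately: well-definedness, continuity, surjectivity together with continuity of the inverse, and injectivity. Only the last will use the assumption $\UUBC^k$; the first three follow from general features of bounded products.

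First I would handle well-definedness and continuity of $\Phi$. If $x = (x_i)_{i \in I} \in \bprod_{i \in I} C_i^k$ is a cocycle, then each coordinate $x_i$ is a cocycle in~$C_i^*$, and the trivial estimate $\|[x_i]\| \le |x_i| \le |x|_\infty$ shows that $([x_i])_{i \in I}$ belongs to $\bprod_{i \in I} H^k(C_i^*)$ with supremum norm at most $|x|_\infty$. A coboundary $\delta(y)$ with $y \in \bprod_{i \in I} C_i^{k-1}$ is sent to zero because each coordinate becomes a coboundary. Taking the infimum over representatives thus yields a well-defined $R$-linear map~$\Phi$ with operator norm at most~$1$.

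Next, for surjectivity of~$\Phi$ together with continuity of the inverse: given $\alpha = (\alpha_i)_{i \in I} \in \bprod_{i \in I} H^k(C_i^*)$ and $\varepsilon > 0$, I would use the definition of the seminorm on each $H^k(C_i^*)$ to choose cocycle representatives $x_i \in C_i^k$ of $\alpha_i$ with $|x_i| \le \|\alpha_i\| + \varepsilon$. Since $\sup_{i} \|\alpha_i\| = \|\alpha\|_\infty < \infty$, the family $x = (x_i)_{i \in I}$ lies in~$\bprod_{i \in I} C_i^k$, is a cocycle there, and satisfies $\Phi([x]) = \alpha$. Letting $\varepsilon \to 0$ yields $\|\Phi^{-1}(\alpha)\| \le \|\alpha\|_\infty$, giving both surjectivity and continuity of the inverse with norm at most~$1$.

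The key step, where the uniform uniform boundary condition enters, is injectivity of~$\Phi$. Suppose $[x] \in H^k(\bprod_{i \in I} C_i^*)$ lies in the kernel, represented by a cocycle $x = (x_i)_{i \in I}$. Then each coordinate $x_i$ is a coboundary in~$C_i^k$, and $\UUBC^k$ supplies a uniform constant~$K$ together with primitives $y_i \in C_i^{k-1}$ satisfying $\delta_i^{k-1}(y_i) = x_i$ and $|y_i| \le K \cdot |x_i| \le K \cdot |x|_\infty$. Consequently $y = (y_i)_{i \in I}$ lies in the bounded product~$\bprod_{i \in I} C_i^{k-1}$ and fills~$x$ in the product cochain complex, so $[x] = 0$. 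Without the uniformity supplied by $\UUBC^k$ the individual primitives might have unbounded norms and would not assemble into an element of the bounded product; this is the only point in the argument where the hypothesis is genuinely used.
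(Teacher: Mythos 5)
Your proposal is correct and uses the same key idea as the paper: choosing representatives of controlled norm to construct the inverse, and invoking $\UUBC^k$ to produce coordinate-wise primitives of uniformly bounded norm that assemble into an element of the bounded product. The paper phrases the $\UUBC^k$ step as verifying that the inverse is independent of the choice of representatives, whereas you phrase it as proving injectivity of $\Phi$ directly; these are equivalent formulations of the same argument, and your presentation is, if anything, a little cleaner in separating the four things to check.
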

\begin{proof}
  Clearly, the map~$\Phi$ is well-defined and continuous. 

  We construct an explicit inverse: Let $\varepsilon \in \R_{>0}$.
  Let $(\varphi_i)_{i \in I} \in \bprod{i \in I} H^k(C_i^*)$; for each~$i \in I$,
  there exists a cocycle~$f_i \in C_i^k$ representing~$\varphi_i$ in~$H^k(C_i^*)$
  with
  \[ |f_i| \leq \|\varphi_i\| + \varepsilon \,.
  \]
  Because $(\varphi_i)_{i \in I}$ lies in the bounded product,
  the norms~$(|f_i|)_{i \in I}$ are a bounded collection, and so
  $f \coloneqq (f_i)_{i\in I} \in \bprod{i \in I} C_i^k$; moreover,
  $\delta (f) = (\delta_i (f_i))_{i\in I} = 0$. 
  Therefore, we obtain a cohomology class
  \[ \varphi \coloneqq [f] \in H^k\Bigl( \bprod{i \in I} C_i^*\Bigr) \,.
  \]
  By construction, $\Phi(\varphi) = (\varphi_i)_{i \in I}$. 

  If this construction is independent of the chosen
  collection~$(f_i)_{i \in I}$, then it provides an $R$-linear inverse
  of~$\Phi$; moreover, as we can take~$\varepsilon \to 0$, we also see
  that this inverse is bounded.
  
  Thus, it remains to show that $\varphi$ is independent of the choice
  of the collection~$(f_i)_{i \in I}$. In order to show this, we use the uniform uniform
  boundary condition: Let $(f'_i)_{i \in I} \in \prod_{i \in I} C_i^k$
  be a collection of cocycles with
  \[ [f'_i] = \varphi_i \in H^k(C_i^*)
     \qand |f'_i| \leq \|\varphi_i\| + 1
  \]
  for all~$i \in I$. Let $K$ be a $\UUBC^k$-constant for~$(C_i^*)_{i \in I}$.
  Then, for each~$i \in I$, there is a cochain~$c_i \in C_i^{k-1}$ with
  \[ \delta_i (c_i) = f_i - f'_i
  \qand |c_i| \leq K \cdot |f_i - f'_i|
  \leq K \cdot 2 \cdot \bigl( \|\varphi_i\| +1\bigr) \,.
  \]
  Thus, $c \coloneqq (c_i)_{i \in I}$ lies in~$\bprod{i \in I} C_i^{k-1}$
  and $\delta (c) = f -f'$. 
\end{proof}

%%%%%%%%%%%%%%%%%%%%%%%%%%%%%%%%%%%%%%%%%%%%%%%%%%%%%%%%%%%%%%%%

\bigskip

\bibliographystyle{amsalphaabbrv}
\bibliography{bib}

\end{document}